\documentclass[12pt]{article}

\usepackage[margin=1in]{geometry}
\usepackage{setspace}
\usepackage{amsmath,amssymb,amsthm}
\usepackage{bm}
\usepackage[mathscr]{euscript}

\usepackage{newtxtext,newtxmath}

\usepackage{subcaption,siunitx,booktabs}
\usepackage{multirow}
\usepackage{array}
\usepackage{comment}
\usepackage{thmtools}
\usepackage{thm-restate}

\usepackage{authblk}

\usepackage{tikz}
\usetikzlibrary{
    shapes.geometric, 
    arrows.meta,     
    positioning,
    decorations.markings,
    calc,
    fit
}

\usepackage[round]{natbib}

\usepackage{hyperref}
\hypersetup{
    colorlinks=true,
    linkcolor=blue,
    citecolor=blue,
    urlcolor=blue
}

\newtheorem{definition}{Definition}

\newcolumntype{L}[1]{>{\raggedright\arraybackslash}p{#1}}
\newcolumntype{C}[1]{>{\centering\arraybackslash}p{#1}}
\newcolumntype{R}[1]{>{\raggedleft\arraybackslash}p{#1}}

\newcommand{\mb}[1]{{\color{black} #1}}
\newcommand{\rmb}[1]{{\color{black} #1}}

\newcommand{\CSO}{\mb{\texttt{CSO}}}
\newcommand{\CSOFix}{\mb{\texttt{CF}}}
\newcommand{\Pref}{\mb{\texttt{URC}}}
\newcommand{\PrefFix}{\mb{\texttt{UF}}}
\newcommand{\PrefUB}{\mb{\texttt{URC-UB}}}
\newcommand{\PrefLB}{\mb{\texttt{URC-LB}}}

\newcommand{\PWLSet}{\mathcal{I}_{\mathrm{pwl}}}
\newcommand{\PWLIdx}{\iota}
\newcommand{\PWLCount}{I_{\mathrm{pwl}}}
\newcommand{\PWLDelta}{\Delta}
\newcommand{\PWLSlope}{\mu}

\newcommand{\PWLCSO}{\texttt{PWL-CSO}}
\newcommand{\PWLURC}{\texttt{PWL-URC}}

\newcommand{\coverage}{\mb{\delta^\texttt{cov}}}

\usepackage{amsthm}

\theoremstyle{definition}
\newtheorem{example}{Example}

\DeclareMathOperator*{\argmin}{argmin}

\newenvironment{APPENDICES}{\appendix}{}

\title{\textbf{Evacuation Planning for Disaster Preparedness:\\An Adaptive Robust Optimization Approach}}

\author[1]{Jaehyuk Kim}
\author[2]{Merve Bodur}
\author[1]{Maria E. Mayorga}
\author[1]{Osman Y. {\"O}zalt{\i}n}

\affil[1]{Edward P. Fitts Department of Industrial and Systems Engineering, North Carolina State University, Raleigh, NC 27695, USA (\texttt{\{jkim226, oyozalti, memayorg\}@ncsu.edu})}
\affil[2]{School of Mathematics and Maxwell Institute for Mathematical Sciences, University of Edinburgh, UK (\texttt{merve.bodur@ed.ac.uk})}

\date{}

\begin{document}

\maketitle

\begin{abstract}
\noindent \textbf{Problem definition:} Evacuation planning for disaster preparedness requires making critical decisions under uncertainty before the number and spatial distribution of evacuees are known, including shelter location, evacuation route assignment, and relief supply prepositioning. Because these decisions are highly interdependent, planners must balance the competing objectives of maximizing relief demand coverage and minimizing evacuation time. 
\noindent \textbf{Methodology/results:} We propose, to our knowledge, the first adaptive robust evacuation planning model to jointly optimize shelter locations, evacuation route assignments, relief supply prepositioning, and post-disaster relief item distribution. The model minimizes the worst-case weighted sum of unmet demand for relief items across shelters and the congestion-dependent evacuation time. We characterize key theoretical complexity drivers of the resulting problem with mixed-integer recourse and develop a partition-and-bound algorithm that maintains tractability by selectively partitioning only the most critical subpartition of the uncertainty set while producing strong upper and lower bounds. To quantify the value of centralized route planning, we also formulate a user route choice alternative in which evacuees choose among acceptable routes. Computational experiments on the Sioux Falls network quantify the operational value of centralized route planning, which reduces worst-case unmet demand and evacuation time by up to 90.6\% and 79.3\%, respectively, relative to decentralized user route choice. Adaptive post-disaster supply redistribution further improves relief demand coverage.
\noindent \textbf{Managerial implications:} Coordination between evacuation routing and relief distribution creates substantial operational value under uncertainty. Centralized route planning primarily mitigates congestion by coordinating evacuee flows across shelters, whereas adaptive redistribution primarily improves relief demand coverage when relief supplies are scarce or inflexibly prepositioned. These findings help planners determine when centralized routing and adaptive redistribution provide the greatest operational value under different resource and service-priority settings.
\end{abstract}

\vspace{1em}
\noindent \textbf{Keywords:} disaster preparedness, evacuation planning, relief supply prepositioning, adaptive robust mixed-integer optimization, partition-and-bound algorithm

\bigskip

\section{Introduction}
\label{S4:introduction}

Natural disasters continue to impose substantial human and economic costs worldwide, despite advances in forecasting and emergency management. In 2024 alone, nearly 400 disasters were recorded globally, affecting more than 167 million people and causing approximately US \$242 billion in damages~\citep{CRED2024}. In the United States, the annual number of billion-dollar disaster events has increased from fewer than ten on average during the 1980s to more than twenty-five in recent years~\citep{Climate2025}. As disasters become more frequent and costly, emergency management agencies face growing pressure to develop preparedness plans
that ensure the efficient use of limited resources and improve service outcomes for affected populations.

Mass evacuation is among the most complex logistical operations in disaster response. To support a large-scale evacuation, planners must decide which shelters to open, how to route evacuees through the transportation network, and where to preposition relief supplies, such as food, blankets, and hygiene kits, for displaced populations. These decisions are tightly interconnected. The set of selected shelters determines feasible evacuation destinations; shelter and route decisions together shape the distribution of evacuees across the network; and the resulting shelter utilization determines local demand for relief supplies. Poor coordination of preparedness decisions can therefore intensify traffic congestion, overwhelm some shelters while leaving others underutilized, and leave evacuees without essential relief supplies during a critical stage of the response.

This coordination problem is further complicated by uncertainty. It is hard to estimate the size of the evacuee population in a region, and even harder to estimate the proportion of evacuees with limited accommodation options due to economic barriers, lack of a social support network, or constraints in hotel and short-term rental availability. Therefore, before a disaster occurs, emergency planners typically do not know how many evacuees will rely on public shelters or how demand will be distributed across open sites. 
Yet shelter locations, evacuation routes, and relief supply prepositioning must all be determined before this uncertainty is resolved. Once shelter demand is realized, however, prepositioned supplies can be redistributed to better match actual needs. Thus, although strategic preparedness decisions must be committed ex ante, operational relief distribution decisions can adapt to realized conditions. This naturally gives rise to a two-stage decision framework in which preparedness decisions are made before the disaster, followed by adaptive post-disaster relief distribution based on realized shelter demand, as illustrated in Figure~\ref{F:problem-description}.

\begin{figure}[t!]
\centering
\includegraphics[scale=0.7]{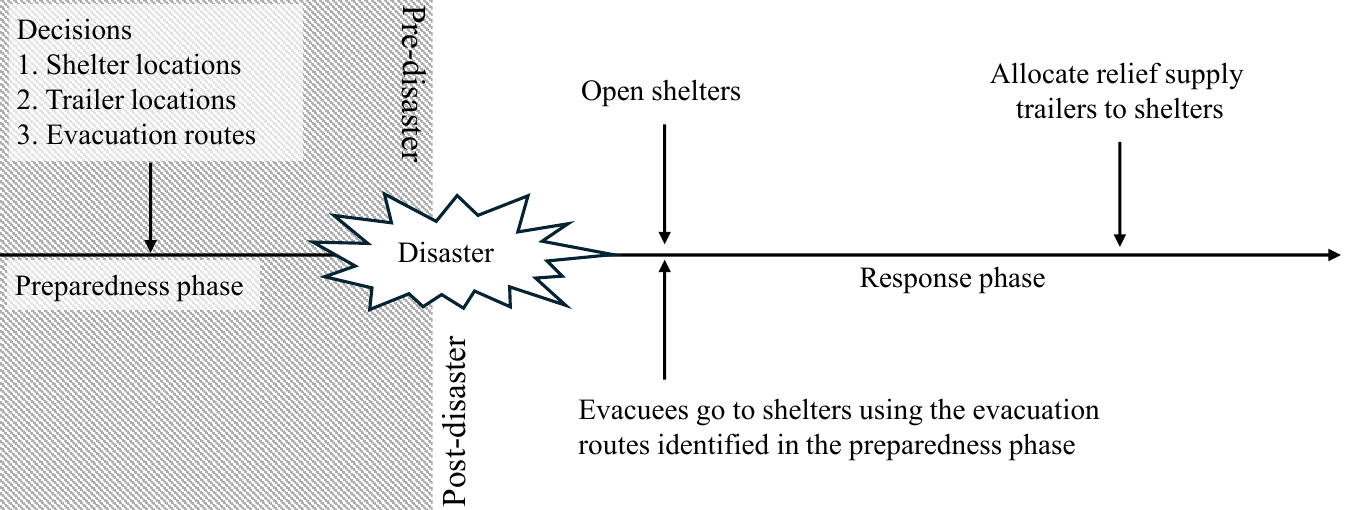}
\caption{Timeline of events. Shelter locations are selected, relief supplies are prepositioned, and evacuation routes are assigned pre-disaster. Shelters are opened, and the response phase begins when a disaster occurs. Relief supplies are distributed based on the shelter demand realization in the second stage.}
\label{F:problem-description}
\end{figure}

This decision structure is not yet fully reflected in the evacuation planning literature, as detailed in Section \ref{S4:literature}. While evacuation planning models have considered shelter location, route assignment, and relief supply planning in various combinations, their integrated treatment under uncertainty remains limited. In particular, closely related models typically do not combine implementable pre-disaster planning with adaptive post-disaster relief distribution. Moreover, many existing approaches rely on stochastic programming, which requires specifying demand scenarios and their associated probabilities. Such information is often difficult to estimate reliably because major disasters are infrequent, high-impact events with limited historical data. These challenges motivate the use of adaptive robust optimization, which supports implementable preparedness decisions while explicitly accounting for uncertainty without requiring probability distributions.

To support coordinated evacuation preparedness under uncertainty, we develop a two-stage adaptive robust mixed-integer optimization model that integrates shelter location, evacuation routing, relief supply prepositioning, and adaptive post-disaster relief distribution. The model captures the two-stage decision structure of the problem by making preparedness decisions in the pre-disaster phase while allowing adaptive post-disaster redistribution of relief supplies in response to realized shelter needs. We seek preparedness plans that perform well across a range of possible demand realizations by minimizing the worst-case weighted sum of total unmet relief item demand and total evacuation time. To solve the resulting problem with mixed-integer recourse, we develop a tree-based nested uncertainty set partitioning approach that yields tight upper and lower bounds while maintaining computational tractability. We also formulate a user route choice benchmark to quantify the operational value of centralized route planning. Computational experiments demonstrate that centralized route planning can substantially improve the trade-off between unmet demand and evacuation time, particularly when multiple shelters are opened, and relief resources are limited. The results further show that adaptive post-disaster redistribution provides important flexibility and that system performance is highly sensitive to the relative emphasis placed on service coverage versus evacuation efficiency.

The contributions of this paper can be summarized as follows:
\begin{itemize}
\item We develop, to our knowledge, the first two-stage adaptive robust evacuation planning framework that jointly optimizes strategic relief supply prepositioning and adaptive post-disaster relief distribution under demand uncertainty while integrating shelter location and evacuation route planning.

\item Within the proposed framework, we formulate two traffic assignment variants: a centralized constrained system-optimal model and a decentralized user route choice model. The latter introduces a novel decision-dependent structure in which evacuees choose among acceptable routes, with the choice set determined by shelter opening decisions. Together, these variants enable an assessment of the operational value of centralized route planning under uncertainty.

\item We provide theoretical complexity insights for adaptive robust evacuation planning\rmb{, showing that the computational difficulty arises from multiple layers of the problem: the deterministic version is NP-hard and the full adaptive robust decision problem with a bounded polyhedral uncertainty set is \(\Sigma^p_2\)-complete. We also identify a polynomially solvable recourse subproblem, clarifying which part remains tractable.}

\item Building on the \rmb{complexity} insights, we develop a partition-and-bound algorithm 
\rmb{with a problem-tailored active-partition refinement rule}
that produces strong upper and lower bounds while controlling the growth of the search tree. Computational experiments show that the method achieves better solution quality within comparable solution time to existing partition-based benchmarks. Although designed to efficiently solve the proposed evacuation planning model, the method is generic and can be applied to other two-stage adaptive robust optimization problems with mixed-integer recourse.

\item We provide computational and managerial insights through a case study on the Sioux Falls network, demonstrating the practical tractability of the proposed approach and characterizing the trade-off between unmet relief item demand and evacuation time, the value of centralized route planning, and the benefits of adaptive post-disaster relief distribution under varying resource availability and operational priorities.
\end{itemize}

The remainder of the paper is organized as follows. Section~\ref{S4:literature} reviews the relevant literature and positions our work within the fields of evacuation planning, humanitarian logistics, and robust optimization. Section~\ref{S4:problem_description} presents the proposed adaptive robust evacuation planning models. Section~\ref{sec:complexity} provides complexity insights for the problem. Section~\ref{S4:solution_approach} develops the partition-and-bound solution methodology. Section~\ref{S4:case_study} presents computational experiments and managerial insights based on the Sioux Falls network. Finally, Section~\ref{S4:conclusions} concludes the paper and discusses directions for future research.

\section{Literature Review}
\label{S4:literature}

Disaster preparedness and response have been extensively studied, addressing a wide range of decisions such as the pre-positioning of relief supplies~\citep{velasquez2020prepositioning, li2024distributional} and the location of dispensing points for mass prophylaxis~\citep{ramirez2015point}.  \citet{bayram2016} presents a comprehensive review of 
evacuation planning models, and  \cite{amideo2019optimising} discusses implementation challenges in optimizing shelter location and evacuation routing. 
We review 
evacuation planning models that consider shelter-location, shelter-assignment, route-assignment, and traffic-assignment structures in different combinations.
Table~\ref{T:lit_review_table} 
summarizes the reviewed papers along the decision, uncertainty, and traffic-assignment dimensions most relevant to our work. We use this comparison to position the gap addressed in this paper: disaster preparedness decisions under uncertain evacuee demand integrated with adaptive post-disaster relief distribution.

\noindent \textbf{Deterministic Models.}  
A first stream of studies coordinates shelter and routing decisions when demand and network conditions are treated as known. \cite{sherali1991} proposed a centralized, system-optimal model for selecting shelter locations and evacuation routes to minimize total travel time, while   
\cite{kongsomsaksakul2005} developed a deterministic bilevel model in which a government authority selects shelter locations and evacuees choose shelters and routes.
Subsequent models broadened this shelter-routing problem by considering primary and backup routes~\citep{coutinho2012}, relief item distribution and emergency medical service deployment~\citep{sheu2014}, acceptable evacuation paths~\citep{bayram2015}, or the timing of shelter openings and evacuation orders~\citep{gama2016}. These studies underscore the importance of coordinating shelter and routing decisions, but they do not address how preparedness decisions should account for uncertainty.

\begin{table}[b!]
{\small
\centering
\begin{tabular}{l ccccccc}
\toprule
\multirow{2}{*}{} 
  & \multicolumn{5}{c}{Decisions} 
  &  Uncertainty
  & Traffic \\ 
\cmidrule(lr){2-6} 
  &  SL &  SA &  RA & PS & Other & 
\rmb{Approach}  & Assignment \\
\midrule
\cite{sherali1991}         & \checkmark  & \checkmark &  \checkmark &  &   &  Det & SO  \\
\cite{kongsomsaksakul2005} & \checkmark  &  &   &  &   &  Det & UE  \\
\cite{coutinho2012}        & \checkmark  & \checkmark & \checkmark &  &   & Det  & SO  \\
\cite{sheu2014}            & \checkmark  & \checkmark &   &  &   &  Det & SO  \\
\cite{bayram2015}          & \checkmark  & \checkmark & \checkmark  &  &   &  Det & CSO \\
\cite{gama2016}            & \checkmark  & \checkmark &   &  & EO  &  Det & SO  \\
\midrule
\cite{shen2008}            & \checkmark  &   &  &   &  &  Scen & UE  \\
\cite{kulshrestha2011}     & \checkmark  &  &   &   & SC  & Scen & UE  \\
\cite{li2012}              & \checkmark  &  &  &   &  &  Scen & UE  \\
\cite{bayram2018}          & \checkmark  & \checkmark & \checkmark  &   &  &  Scen & CSO \\
\cite{liang2019}           & \checkmark  & \checkmark & \checkmark &   &  & Scen  & SO  \\
\cite{esposito2021}        & \checkmark  & \checkmark & \checkmark &   &  & Scen  & SO  \\
\midrule
\textbf{This paper}        & \checkmark  & \checkmark  & \checkmark & \checkmark  & DS  & Rob & CSO, URC \\
\bottomrule
\end{tabular}
\caption[]{Summary of the reviewed papers. Decisions: Shelter Location (SL), Shelter Assignment (SA), Shelter Capacity (SC), Route Assignment (RA), Prepositioning Relief Supplies (PS), Distributing Relief Supplies (DS), Evacuation Order (EO).  Uncertainty modeling: deterministic (Det), finite scenario/discrete uncertainty formulation
(Scen), and robust optimization 
(Rob). Traffic assignment approaches: user equilibrium (UE), user route choice (URC), system optimal (SO), and constrained system optimal (CSO).}\label{T:lit_review_table}
}
\end{table}

\noindent \textbf{Stochastic Models.} A second stream incorporates uncertain demand, travel times, road capacities, or shelter availability. \cite{shen2008} proposed a bilevel model that minimizes the conditional value-at-risk of the maximum travel-time regret,  
with route travel times determined via Beckmann's transformation of the user equilibrium model~\citep{sheffi1985urban}. 
\cite{kulshrestha2011} proposed a {\it static} robust optimization model based on a discrete uncertainty set with three levels: minimum, nominal, and maximum, for shelter demand. Their model determines the number of shelters, their locations, and capacities, while evacuees choose shelters and routes. 
Other models use scenario-based bilevel \citep{li2012}, two-stage stochastic \citep{bayram2018, esposito2021} or chance-constrained \citep{liang2019} formulations to address uncertainty in evacuee counts, road capacity, shelter availability, travel conditions, evacuation modes, or network disruption. These models show how uncertainty can be incorporated into evacuation planning, but they do not fully match the integrated preparedness structure considered here. In \cite{bayram2018}, for example, shelter locations are selected in the first stage, while evacuees are assigned to shelters and routes after uncertainty is realized; thus, shelter locations are the only implementable pre-disaster decisions. By contrast, our setting requires pre-disaster shelter-location, route-assignment, and supply-prepositioning decisions, while allowing relief distribution to adapt after demand is realized.

\noindent \textbf{\rmb{Positioning of this paper.}} The reviewed studies in Table~\ref{T:lit_review_table} do not integrate shelter location, route assignment to reach shelters, relief supply prepositioning, and adaptive post-disaster relief distribution under uncertain evacuee counts. To our knowledge, we present the first model to jointly optimize these decisions. We address this gap through a risk-averse, two-stage adaptive robust optimization framework, which is appropriate for rare, high-impact disasters where reliable scenario probabilities may be unavailable. Our model balances evacuation travel time, a central objective in evacuation planning, with unmet relief demand at shelters. Table~\ref{T:lit_review_table} also highlights differences in traffic-assignment approaches, which motivate our proposed user route choice model as a benchmark for assessing the value of centralized route planning.

\section{Robust Evacuation Planning Models} 
\label{S4:problem_description}
We propose two-stage robust mixed-integer optimization models for evacuation planning to ensure the availability of relief supplies at shelters during the early stages of disaster response. We consider a road network consisting of nodes that represent potentially impacted evacuee locations $\mathcal{O}$ (origins), candidate shelter locations $\mathcal{J}$, and candidate trailer locations $\mathcal{L}$ for prepositioning relief supplies. 
The number of shelter evacuees at each origin is uncertain. We represent this uncertainty through a generic uncertainty set $\Xi$. Each element $\xi_i$ of the realization vector $\xi\in\Xi$ specifies the corresponding evacuee count at origin $i\in\mathcal{O}$. The models follow a two-stage decision structure. In the first stage, before the evacuee counts are known, evacuation planners choose which shelters to open, where to preposition relief supply trailers, and how to make traffic assignment decisions.  We assume that each shelter evacuee requires one set of relief items; thus, shelter populations determine relief demand. Because shelter populations depend on both realized evacuee counts and traffic assignment decisions that direct evacuees to shelters, shelter-level relief demand is endogenous.
In the second stage, consistent with current practice, pallets of prepositioned relief items are distributed from nearby trailers to shelters to ensure timely availability~\citep{maghfiroh2018dynamic}.
 Finally, unmet relief demand is evaluated, and traffic conditions induced by the evacuation plan determine the evacuation time.

\begin{table}[ht!]
\caption{Summary of the notation used in the evacuation planning models.}
\label{T:ch4_notation}
\centering
\renewcommand{\arraystretch}{0.75}
\scalebox{0.88}{
\begin{tabular}{p{2cm} p{15.5cm}}
\toprule
\multicolumn{2}{l}{\textbf{Sets and indices}}\\
\cmidrule {1-2}
$\mathcal{O}$ & evacuee locations, i.e., areas potentially impacted by a disaster\\
$\mathcal{J}$ & candidate shelter locations \\
$\mathcal{L}$ & candidate trailer locations \\
$\mathcal{L}_j \subseteq \mathcal{L}$ & locations from which prepositioned relief supplies can be sent to shelter $j \in \mathcal{J}$\\
$\mathcal{R}_{ij}$ &  evacuation routes from origin $i \in \mathcal{O}$ to shelter $j \in \mathcal{J}$\\
$\mathcal{A}$ &arcs in the network (road segments)\\
$\PWLSet$ & index set of piecewise-linear volume delay function segments\\
$\Xi$ & uncertainty set for evacuee counts $\xi$ \\
\cmidrule {1-2}
\multicolumn{2}{l}{\textbf{Parameters}}\\
\cmidrule {1-2}
$\xi_{i}$ &  realization of the shelter evacuee count at origin  $i \in \mathcal{O}$ \\
$d_{r}$ & length of route $r$\\ 
$\hat{d}_{i \rmb{j}}$ & length of the longest route between origin $i \in \mathcal{O}$ and shelter $j \in \mathcal{J}$\\ 
$c_{a}$ & capacity of road segment $a \in \mathcal{A}$\\
$t^0_a$ & free-flow travel time on road segment $a \in \mathcal{A}$ \\
$\alpha,\beta$ & parameters of the volume-delay function \\
$N$ & number of shelters to open\\
$S$ & number of relief supply trailers\\
$K$ & number of pallets on a trailer\\
$m$ & number of relief item sets on a pallet (each evacuee needs one set of relief items)\\
$\lambda$ & objective function weight on evacuation time \\
$\vartheta_r$ & route attractiveness measure used in the multinomial logit model \\
$\varepsilon_r$ & error in the utility estimate of route $r$ \\
\cmidrule {1-2}
\multicolumn{2}{l}{\textbf{Decision variables}}\\
\cmidrule {1-2}
$b_{j} \in \{0,1\}$ &1 if shelter $j \in \mathcal{J}$ is open, 0 otherwise\\
$s^1_\ell \in \mathbb{Z}_{+}$ &number of trailers prepositioned at location $\ell \in \mathcal{L}$\\
$y_{r} \in [0,1]$ & proportion of shelter evacuees at origin $i \in \mathcal{O}$ assigned to route $r \in \cup_{j \in \mathcal{J}}\mathcal{R}_{ij}$ \\
$s_{\ell j}^{2}(\xi) \in \mathbb{Z}_{+}$ & number of pallets distributed from location $\ell \in \rmb{\mathcal{L}_j}$ to shelter $j \in \mathcal{J}$\\
$u_{j}(\xi) \in \mathbb{R}_{+}$ & unmet relief demand at shelter $j \in \mathcal{J}$\\
$x_a(\xi) \in \mathbb{R}_{+}$ & traffic volume on road segment $a \in \mathcal{A}$\\
$\tau_r \in \{0,1\}$ & indicator variable denoting the acceptability of route $r$ in the \Pref\ model \\
$h_r  \in [0,1]$ & auxiliary variable used to enforce route acceptability in the \Pref\ model \\
$\pi_r  \in [0,1]$ & proportion of evacuees selecting route $r$ in the \Pref\ model\\
\bottomrule
\end{tabular}
}
\end{table}
Within this two-stage structure, we develop two models that differ in traffic assignments. The first is the constrained system optimal (\CSO) model, which serves as the main planning model and allocates evacuees across acceptable routes (Section~\ref{with_route_planning}). The second is the user route choice (\Pref) benchmark, which allows evacuees to choose among acceptable routes (Section~\ref{with_route_choice}). 
Table~\ref{T:ch4_notation} summarizes the notation used in our models.

\subsection{Constrained System-Optimal Model}
\label{with_route_planning}
\rmb{The constrained system-optimal model, denoted by \CSO, is the centralized planning formulation. It chooses the first-stage decision vector $(b,s^1,y)$ before the shelter evacuee realization is known: $b$ selects the open shelters, $s^1$ positions relief-supply trailers, and $y$ assigns shelter evacuees across acceptable routes. For each realization $\xi\in\Xi$, the adaptive recourse variables $s^2_{\ell j}(\xi)$ and $u_j(\xi)$ determine pallet shipments and unmet relief demand, respectively, while $x_a(\xi)$ records the traffic volume. The formulation is given by:}
\begin{subequations}\label{originalmodel}
\begin{align}
z_{\CSO} := \min \ & \max_{\xi \in \Xi} \  \sum_{j \in \rmb{\mathcal{J}}} u_j(\xi) + \lambda \sum_{a \in \mathcal{A}} T_a(x_a(\xi))x_a(\xi)\label{obj-function_model1} \\
\text{s.t.} \ &\label{eq:tot_open_shelter_model1} \sum_{j \in \rmb{\mathcal{J}}} b_{j} = N,\quad \sum_{\rmb{\ell \in \mathcal{L}}}s^{1}_{\rmb{\ell}} = S,\\
\label{eq:route_assign_model1} & \sum_{j \in \rmb{\mathcal{J}}}  \sum_{r \in \mathcal{R}_{ij}} y_{r} = 1 & i \in \mathcal{O}, \\
&\label{eq:route_open_shelter_model1} \sum_{r \in \mathcal{R}_{ij}} y_{r} \leq b_{j}  &  i \in \mathcal{O}, j \in \rmb{\mathcal{J}},\\
&\label{eq:route_restict_model1} \sum_{j \in \rmb{\mathcal{J}}}  \sum_{r \in \mathcal{R}_{ij}:d_{r} > \hat{d}_{i \rmb{k}}} y_{r} + b_{\rmb{k}} \leq 1 &  i \in \mathcal{O},  \rmb{k \in \mathcal{J}},\\
\label{eq:unmet_model1} &   \sum_{i \in \mathcal{O}} \sum_{r \in \mathcal{R}_{ij}} \xi_{i}y_{r} \leq  u_j(\xi)+ m\sum_{\ell \in \rmb{\mathcal{L}_j}} s^{2}_{\ell j}(\xi)  & j \in \rmb{\mathcal{J}}, \xi \in \Xi,\\
\label{eq:arc_capa_model1} &   \sum_{i \in \mathcal{O}} \sum_{j \in \rmb{\mathcal{J}}} \sum_{r \in \mathcal{R}_{ij}:a \in r} \xi_{i}y_{r} ~\rmb{=}~ x_a(\xi) & a \in \mathcal{A}, \xi \in \Xi,\\
&\label{eq:flexible_trailer_capa_model1}\sum_{j \in \rmb{\mathcal{J}} \, : \, \ell \in \rmb{\mathcal{L}_j}} s^{2}_{\ell j}(\xi) \leq K s^{1}_{\ell} &  \ell \in \rmb{\mathcal{L}}, \xi \in \Xi,\\
\label{eq:non_neg_model11} & b_j \in \{0,1\}, s^1_{\rmb{\ell}} \in \mathbb{Z}_{+}, y_{r} \geq 0 & \hspace{-1cm} i \in \mathcal{O}, j \in \rmb{\mathcal{J}}, \rmb{\ell \in \mathcal{L}}, r \in \mathcal{R}_{ij},\\
\label{eq:non_neg_model1} & s^{2}_{\ell j}(\xi) \in \mathbb{Z}_{+}, u_{j}(\xi), x_a(\xi) \geq 0 & \hspace{-1cm} j \in \rmb{\mathcal{J}}, \ell \in \rmb{\mathcal{L}}, a \in \mathcal{A}, \xi \in \Xi.
\end{align}
\end{subequations} 
\rmb{The objective~\eqref{obj-function_model1} minimizes the worst-case weighted sum of unmet relief demand and evacuation time. The trade-off between these terms is operationally important because shelter and trailer locations that improve relief coverage may also concentrate traffic on a part of the road network. The volume-delay function $T_a(\cdot)$ is specified in Section~\ref{ttf}.}
Constraints~\eqref{eq:tot_open_shelter_model1} limit the number of open shelters and relief supply trailers.
\rmb{Constraints \eqref{eq:route_assign_model1}-\eqref{eq:route_open_shelter_model1} assign evacuees  to  routes leading to open shelters.} 
\rmb{Constraints~\eqref{eq:route_restict_model1} model route acceptability. If shelter $k$ is open, then  $y_r=0$ for any route $r$ from origin $i$ with $d_r>\hat{d}_{ik}$; equivalently, evacuees can be assigned to route $r$ only if $d_r\le \min_{k\in\mathcal{J}:b_k=1}\hat{d}_{ik}$. Thus evacuees need not be assigned to the closest open shelter, but they cannot be assigned to a shelter through a route that is longer than all routes to another open shelter.}
\rmb{For each realization $\xi$, constraints~\eqref{eq:unmet_model1} require the relief demand at each shelter to be satisfied by adaptive relief distribution from prepositioned trailers, with any shortfall recorded as unmet demand.}
\rmb{Constraints~\eqref{eq:arc_capa_model1} calculate the traffic volume $x_a(\xi)$ on road segment $a$.}
Constraints \eqref{eq:flexible_trailer_capa_model1} limit \rmb{the relief supplies shipped from each trailer location $\ell$ to $K s^1_\ell$ prepositioned pallets.} 
Constraints~\eqref{eq:non_neg_model11}-\eqref{eq:non_neg_model1} 
\rmb{impose the variable domains.} 

Let $z$ denote the worst-case value of the objective function. The epigraph reformulation of \CSO\ expresses the inner maximization in~\eqref{obj-function_model1} as a constraint over all realizations $\xi \in \Xi$. 
\begin{subequations}\label{armio}
\begin{align} 
(\text{ARMIO}) \ z_{\CSO} =  \min \ & z \\
\text{s.t.} \ & \sum_{j \in \rmb{\mathcal{J}}} u_j(\xi) + \lambda \sum_{a \in \mathcal{A}} T_a(x_a(\xi))x_a(\xi) \leq z &  \xi \in \Xi, \label{eq:ch3_rob_cost}\\
& \eqref{eq:tot_open_shelter_model1}-\eqref{eq:non_neg_model1}. \notag
\end{align}
\end{subequations}
When $\Xi$ is continuous, semi-infinite constraints~\eqref{eq:ch3_rob_cost} ensure that $z$ is equal to the maximum weighted sum of unmet relief demand and evacuation time over all realizations $\xi \in \Xi$. Section~\ref{S4:solution_approach} develops a partition-and-bound approach to solve this formulation.

\subsection{\rmb{User Route Choice Model}}
\label{with_route_choice}

\rmb{The \CSO\ model centrally prescribes traffic assignments $y_r$. Although the route acceptability criterion excludes routes with excessive detours, some evacuees may still be assigned to routes that are less desirable than those they would select on their own. To capture this behavioral consideration, we introduce a user route choice model, denoted by \Pref. This model retains the same shelter-opening, trailer-prepositioning, and adaptive distribution framework as \CSO, but replaces centrally assigned routes with evacuee-driven route choices among acceptable alternatives. In the computational experiments, we compare the solutions obtained under \Pref\ and \CSO\ to quantify the value of centralized route assignment.}

We employ a multinomial logit model to represent evacuees' route preferences. The utility of route $r$ is given by $f_r=\vartheta_r+\varepsilon_r$, where $\vartheta_r$ is a measurable route attractiveness parameter and $\varepsilon_r$ is the unobservable, random component~\citep{daganzo1977stochastic}.   In the case study, $\vartheta_r$ is quantified by the inverse of the route length, and $\varepsilon_r$ is assumed to be independently and identically distributed with log-Weibull distribution~\citep{ljubic2018outer}. 
The distinctive feature of \Pref \ is that the choice set is {\it decision dependent}: a route can be chosen by evacuees only if its destination shelter is open and the route satisfies the acceptability condition used in \CSO.
Let $\tau_r \in \{0,1\}$  equal to $1$ if route $r \in \mathcal{R}_{ij}$ from origin $i$ to shelter $j$ is acceptable.
The proportion of evacuees at origin $i$  selecting route $r$ is then given by
\begin{align}\label{ratiopref}
\pi_{r} = \frac{v_r\tau_r}{\sum_{\rmb{k \in \mathcal{J}}}\sum_{r' \in \mathcal{R}_{ik}}v_{r'}\tau_{r'}}, \ i \in \mathcal{O}, j \in \rmb{\mathcal{J}}, r \in \mathcal{R}_{ij},
\end{align}
where $v_{r} = e^{\vartheta_{r}}>0$ and $\pi_r \in [0,1]$ is the proportion of evacuees selecting route $r$. 
\rmb{For~\eqref{ratiopref} to be well defined, each origin $i \in \mathcal{O}$ must have at least one acceptable route. We enforce this by} 
$ \sum_{j \in \rmb{\mathcal{J}}}  \sum_{r \in \mathcal{R}_{ij}} \tau_{r} \geq 1, \ i \in \mathcal{O}$. 
At feasible points satisfying this requirement, the denominator in~\eqref{ratiopref} is positive, and the ratio can be represented equivalently by multiplying both sides by the denominator.
\begin{subequations}\label{prefmodel}
\begin{align}
z_{\Pref} := \min \ & \max_{\xi \in \Xi} \  \sum_{j \in \rmb{\mathcal{J}}} u_j(\xi) + \lambda \sum_{a \in \mathcal{A}} T_a(x_a(\xi))x_a(\xi)\label{obj-function_model2} \hspace{-1.3cm} &  \\
\text{s.t.} \ &\eqref{eq:tot_open_shelter_model1}, \eqref{eq:flexible_trailer_capa_model1}, \nonumber\\
& \label{eq:route_select_model2} \tau_{r} + b_{\rmb{k}} \leq 1 & \hspace{-1.3cm} 
 i \in \mathcal{O}, j \in \rmb{\mathcal{J}}, r \in \mathcal{R}_{ij}, \rmb{k} \in \rmb{\mathcal{J}}:d_{r} > \hat{d}_{i \rmb{k}},\\
 &\label{eq:open_route_model2} \tau_{r}  + h_{r} = b_{j}  &  i \in \mathcal{O}, j \in \rmb{\mathcal{J}}, r \in \mathcal{R}_{ij},\\
 &\label{eq:activate_default} h_{r} \leq \sum_{\rmb{k} \in \rmb{\mathcal{J}}:d_{r} > \hat{d}_{i \rmb{k}}} b_{\rmb{k}}  & i \in \mathcal{O}, j \in \rmb{\mathcal{J}}, r \in \mathcal{R}_{ij},\\
  \label{eq:route_assign_model2} & \sum_{j \in \rmb{\mathcal{J}}}  \sum_{r \in \mathcal{R}_{ij}} \tau_{r} \geq 1 & i \in \mathcal{O}, \\
 & \pi_{r}\sum_{\rmb{k \in \mathcal{J}}}\sum_{r' \in \mathcal{R}_{ik}} v_{r'}\tau_{r'} = v_{r}\tau_{r} & i \in \mathcal{O}, j \in \rmb{\mathcal{J}}, r \in \mathcal{R}_{ij}, \label{choice-model-const_}\\
\label{eq:unmet_model2} &   \sum_{i \in \mathcal{O}} \sum_{r \in \mathcal{R}_{ij}} \xi_{i} \pi_{r} \leq u_j(\xi) + m \sum_{\ell \in \rmb{\mathcal{L}_j}} s^{2}_{\ell j}(\xi)  & j \in \rmb{\mathcal{J}}, \xi \in \Xi,\\
\label{eq:arc_capa_model2} &   \sum_{i \in \mathcal{O}} \sum_{j \in \rmb{\mathcal{J}}} \sum_{r \in \mathcal{R}_{ij}:a \in r} \xi_{i}\pi_{r} ~\rmb{=}~ x_a(\xi) & a \in \mathcal{A}, \xi \in \Xi,\\
&\label{eq:non_neg_model22} b_j, \tau_{r} \in \{0,1\}, s^1_{\rmb{\ell}} \in \mathbb{Z}_{+}, h_{r}, \pi_{r}  \geq 0 & \hspace{-0.5cm} i \in \mathcal{O}, j \in \rmb{\mathcal{J}}, \rmb{\ell \in \mathcal{L}}, r \in \mathcal{R}_{ij},\\
&\label{eq:non_neg_model2} s^{2}_{\ell j}(\xi) \in \mathbb{Z}_{+}, u_{j}(\xi), x_a(\xi) \geq 0 & \hspace{-0.5cm} j \in \rmb{\mathcal{J}}, \ell \in \rmb{\mathcal{L}}, a \in \mathcal{A}, \xi \in \Xi.
\end{align}
\end{subequations}
\rmb{Constraints~\eqref{eq:route_select_model2}--\eqref{eq:activate_default} define which routes enter the choice set through $\tau_r$ variables.}
\rmb{Constraint~\eqref{eq:route_select_model2} forces $\tau_r=0$ if any open shelter $k$ makes route $r$ too long, i.e., if $d_r>\hat{d}_{ik}$. Constraint~\eqref{eq:open_route_model2} forces $\tau_r=0$, for route $r \in \mathcal{R}_{ij}$, when shelter $j$ is closed. Conversely, if $j$ is open and $d_r\le\hat{d}_{ik}$ for every open shelter $k$, then $h_r=0$ from~\eqref{eq:activate_default}, and~\eqref{eq:open_route_model2} forces $\tau_r=1$. Constraint~\eqref{eq:route_assign_model2} is the nonempty-choice set condition.}
  \rmb{The remaining constraints follow the \CSO{} structure, with $\pi_r$, the proportion of evacuees choosing route $r$,  replacing route assignments $y_r$. Constraint~\eqref{choice-model-const_} formulates the multinomial logit model. Constraints~\eqref{eq:unmet_model2} define unmet relief demand, and constraints~\eqref{eq:arc_capa_model2} calculate the traffic volume on arcs.}
  
  Let  $\mathcal{V} = \{\rmb{(\pi_r,\tau_r) \in \mathbb{R}_+ \times \{0,1\} }  : \eqref{eq:route_assign_model2} \text{ and } \eqref{choice-model-const_} \ \text{are satisfied for } i \in \mathcal{O}, j \in \rmb{\mathcal{J}}, r \in \mathcal{R}_{ij}\}$. Furthermore, let $\mathcal{R}_i:=\cup_{j\in\mathcal{J}}\mathcal{R}_{ij}$ be the set of routes for origin $i$. 
Constraints~\eqref{choice-model-const_} include bilinear $\pi_{r}\tau_{r'}$ terms. A direct linearization of these terms requires \rmb{$O(\sum_{i\in\mathcal{O}}|\mathcal{R}_i|^2)$} auxiliary variables and McCormick inequalities, which can significantly increase the model size. To address this challenge, we develop an efficient reformulation approach that requires $O(|\mathcal{O}|)$ additional continuous variables, and $O(\sum_{i\in\mathcal{O}}|\mathcal{R}_i|)$ McCormick inequalities. We define the following lifted mixed-binary set 
\begin{equation} \label{RlPolyB}
Q := \left\{ \rmb{(\pi,\tau,q):}
  \begin{array}{lr}
                 \pi_r = v_{r} \tau_r q_i & i \in \mathcal{O}, j \in \rmb{\mathcal{J}}, r \in \mathcal{R}_{ij},\\
                \sum_{\rmb{j \in \mathcal{J}}}\sum_{r \in \mathcal{R}_{ij}} \pi_{r} = 1 & i \in \mathcal{O},\\
                \pi_r\geq0,\ \tau_r\in\{0,1\},\ q_i \geq 0 & \quad i \in \mathcal{O}, j \in \rmb{\mathcal{J}}, r \in \mathcal{R}_{ij}.
  \end{array}
\right.
\end{equation}
The formulation of $Q$ includes an additional continuous variable $q_i$ per origin. \rmb{Let 
$\text{Proj}_{(\pi,\tau)}(Q)$ denote the projection of $Q$ onto the $(\pi,\tau)$-space.}

\begin{restatable}{proposition}{projection}
\label{projection}
\rmb{$\text{Proj}_{(\pi,\tau)}(Q) = \mathcal{V}$.}
\end{restatable}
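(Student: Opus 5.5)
We prove the set equality by showing both inclusions, working origin by origin. Both $\mathcal{V}$ and $Q$ decompose into independent blocks indexed by $i\in\mathcal{O}$, each involving only $\{(\pi_r,\tau_r): r\in\mathcal{R}_i\}$ (and $q_i$ for $Q$). It therefore suffices to fix an origin $i$ and write $D_i(\tau):=\sum_{k\in\mathcal{J}}\sum_{r'\in\mathcal{R}_{ik}} v_{r'}\tau_{r'}$. Since $v_r>0$ and $\tau$ is binary, $D_i(\tau)>0$ if and only if $\sum_{r\in\mathcal{R}_i}\tau_r\ge 1$, i.e., if and only if \eqref{eq:route_assign_model2} holds.

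\emph{Inclusion $\mathcal{V}\subseteq \text{Proj}_{(\pi,\tau)}(Q)$.} Take $(\pi,\tau)\in\mathcal{V}$. By \eqref{eq:route_assign_model2}, $D_i(\tau)>0$, so \eqref{choice-model-const_} gives $\pi_r=v_r\tau_r/D_i(\tau)$. We then define $q_i:=1/D_i(\tau)\ge 0$. With this choice $\pi_r=v_r\tau_r q_i$ for all $r\in\mathcal{R}_i$, and summing over $r\in\mathcal{R}_i$ yields $\sum_r\pi_r=D_i(\tau)q_i=1$. Nonnegativity and binarity are inherited, so $(\pi,\tau,q)\in Q$.

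\emph{Inclusion $\text{Proj}_{(\pi,\tau)}(Q)\subseteq\mathcal{V}$.} Take $(\pi,\tau,q)\in Q$. Summing the first defining equality of $Q$ over $r\in\mathcal{R}_i$ and using the second gives $q_i D_i(\tau)=1$. This is the key step: it forces $D_i(\tau)\neq 0$, hence $D_i(\tau)>0$, which is exactly \eqref{eq:route_assign_model2}. It also gives $q_i=1/D_i(\tau)$. Substituting back, $\pi_r=v_r\tau_r/D_i(\tau)$, which after multiplying through by $D_i(\tau)$ is \eqref{choice-model-const_}. Finally, $\pi_r\ge 0$ and $\tau_r\in\{0,1\}$ hold by the definition of $Q$, so $(\pi,\tau)\in\mathcal{V}$.

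The only point needing care is the one just flagged: $Q$ contains no explicit nonempty-choice-set constraint, and \eqref{eq:route_assign_model2} must instead be recovered from the normalization $\sum_r\pi_r=1$. That normalization rules out $\tau\equiv 0$ on $\mathcal{R}_i$, because $\tau\equiv 0$ would give $\pi\equiv 0$ and contradict $\sum_r\pi_r=1$. Once this is established, everything else is direct substitution.
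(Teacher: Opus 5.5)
Your proposal is correct and takes essentially the same approach as the paper. Both arguments prove the two inclusions origin by origin. In one direction you recover the nonempty-choice-set condition and the logit ratio from $q_i D_i(\tau)=1$; in the other you set $q_i:=1/D_i(\tau)$ and sum to obtain the normalization.
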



Based on Proposition~\ref{projection}, \rmb{proven in Appendix \ref{app:projection},} we replace constraints~\eqref{eq:route_assign_model2} and~\eqref{choice-model-const_} with~\eqref{RlPolyB}.  \mb{It remains to linearize the products in~\eqref{RlPolyB}. Define $\underline v_i:=\min\{v_{\bar r}:\bar r\in\cup_{j\in\mathcal{J}}\mathcal{R}_{ij}\}>0$ and $U_{ir}:=v_r/\underline v_i$. Since~\eqref{eq:route_assign_model2} ensures at least one acceptable route for origin $i$, every feasible lifted point satisfies $q_i\le1/\underline v_i$ and hence $0\le v_r q_i\le U_{ir}$. The product of the binary variable $\tau_r$ and the bounded continuous term $v_rq_i$ is represented exactly by McCormick inequalities $\pi_r\le v_r q_i$, $\pi_r\le U_{ir}\tau_r$, and $\pi_r\ge v_rq_i-U_{ir}(1-\tau_r)$, together with $\pi_r\ge0$. Thus the exact linear reformulation uses one auxiliary variable for each origin and route-wise McCormick constraints, rather than route-pair products.}

\subsection{Volume-Delay Function \rmb{and Piecewise-Linear Approximation}} 
\label{ttf}
Traffic assignment models use a volume delay function $T(x)$ to express the
travel time on each road segment as a function of the traffic volume $x$, capacity $c$, and free-flow time $t^0$. \rmb{Such functions typically multiply} $t^0$ \rmb{by} a congestion 
\rmb{factor that} increases \mb{with} the volume-to-capacity ratio ~\citep{spiess1990conical}. Many different volume delay functions have been proposed in the literature~\citep{saric2018volume}.
 The BPR function~\citep{BPR1964}, $T(x)= t^0 \Big(1 + \alpha \Big( \frac{x}{c} \Big)^{\beta}\Big),$
is widely used in evacuation planning literature due to its minimal data requirements and convexity~\citep{kulshrestha2011,bayram2018,liang2019}. We model the evacuation time using the BPR function with $\alpha=0.15$ and $\beta=4$~\citep{bayram2015}.

Objective functions of the upper- and lower-bounding subproblems in our solution approach include $\sum_{a \in \mathcal{A}}T_a(x_a(\xi))x_a(\xi)$  for every $\xi$. Optimizing this nonlinear term in each iteration can \rmb{substantially increase solution time.}
\cite{bayram2015} reformulated the BPR function using second-order conic programming (SOCP) constraints.  
However, for $\beta=4$, this reformulation requires introducing four additional variables and three conic constraints for each $a \in \mathcal{A}, \ \xi \in \Xi$ pair, \rmb{which can limit the size of the instances that can be solved within practical time limits.}
Alternatively, the BPR function $T(x)$ can be approximated by a piecewise-linear function. However, previous studies \rmb{applied this} approach using a different set of breakpoints for each arc's flow~\citep{wei2019efficient, afkham2022balancing}. We propose a new piecewise-linear approximation that uses a single set of breakpoints for the flow-to-capacity ratio across the entire network.  For $\rmb{\rho}=x/c$, we have
\begin{align*}
&T(x)x =  t^0 \Big(1 + \alpha \Big( \frac{x}{c} \Big)^{\beta}\Big)x = t^0c \Big(\frac{x}{c}\Big) + t^0 c \alpha \Big(\frac{x}{c}\Big)^{\beta+1}= t^0 c \left[ \rmb{\rho} + \alpha \rmb{\rho}^{\beta+1} \right].
\end{align*}
Let $g(\rmb{\rho})= \rmb{\rho} + \alpha \rmb{\rho}^{\beta+1}$ and $\{(\rmb{\rho}_{\PWLIdx}, g_{\PWLIdx})\}_{\PWLIdx \in \PWLSet}$\rmb{, where $\PWLSet=\{1,\ldots,\PWLCount\}$,} denote a set of breakpoints for the piecewise-linear approximation of $g(\rmb{\rho})$ over $[0,\rmb{\rho}_{\PWLCount}]$ such that $0=\rmb{\rho}_0 <\rmb{\rho}_1 < \rmb{\rho}_2 < \ldots <  \rmb{\rho}_{\PWLCount}$, and $g_{\PWLIdx}=g(\rmb{\rho}_{\PWLIdx})$. Here, $\rmb{\rho}_{\PWLCount}$ is the maximum flow-to-capacity ratio across all arcs. Furthermore, let ${\PWLSlope}_{\PWLIdx}$ be the slope of line segment $\PWLIdx \in \PWLSet$. 
\rmb{Since \(g(\rho)\) is strictly increasing on \([0,\infty)\), each segment slope is strictly positive: \(\PWLSlope_{\PWLIdx}=\frac{g(\rho_{\PWLIdx})-g(\rho_{\PWLIdx-1})}
{\rho_{\PWLIdx}-\rho_{\PWLIdx-1}}>0.\) Moreover, because \(g\) is strictly convex, these slopes are increasing, i.e., \( 0<\PWLSlope_1< \PWLSlope_2<\cdots<\PWLSlope_{\PWLCount}. \)}
We define decision variables $x^{\PWLIdx}\geq 0$  to represent the traffic volume associated with segment $\PWLIdx \in \PWLSet$ such that $x=\sum_{\PWLIdx \in \PWLSet}x^{\PWLIdx}$. Then, the piecewise-linear approximation of $T(x)x$ is given by $t^0\sum_{\PWLIdx \in \PWLSet} {\PWLSlope}_{\PWLIdx} x^{\PWLIdx}$. \rmb{With this approximation, the \CSO \ objective becomes}
\begin{align}
z_{\CSO} \approx \rmb{z_{\PWLCSO} := } & \min \  \max_{\xi \in \Xi} \  \sum_{j \in \rmb{\mathcal{J}}} u_j(\xi) + \lambda \sum_{a \in \mathcal{A}}  \sum_{\PWLIdx \in \PWLSet}t^0_a \PWLSlope_{\PWLIdx} x^{\PWLIdx}_a(\xi).
\label{obj-function_model1_pwl} 
\end{align}
Furthermore, for $\PWLDelta_{\PWLIdx}  =\rmb{\rho}_{\PWLIdx}-\rmb{\rho}_{\PWLIdx-1}, \forall \PWLIdx \in \PWLSet$, we replace constraints~\eqref{eq:arc_capa_model1} with
\begin{subequations}
\begin{align}
    \label{eq:arc_capa_model1_pwl} &   \sum_{i \in \mathcal{O}} \sum_{j \in \rmb{\mathcal{J}}} \sum_{r \in \mathcal{R}_{ij}:a \in r} \xi_{i}y_{r} = \sum_{\PWLIdx \in \PWLSet}x^{\PWLIdx}_a(\xi) & a \in \mathcal{A}, \xi \in \Xi,\\
        & 0 \leq x^{\PWLIdx}_a(\xi) \leq c_a \PWLDelta_{\PWLIdx}  & \PWLIdx \in \PWLSet\setminus \{\PWLCount\}, a \in \mathcal{A}, \xi \in \Xi. \label{eq:arc_capa_model1_pwlbound}
\end{align}
\end{subequations}

\rmb{Thus, each original arc traffic load variable $x_a(\xi)$ is approximated by $\PWLCount$ segment load variables $x_a^{\PWLIdx}(\xi)$. This substitution yields linear objective terms and linear traffic flow constraints, enhancing the tractability of the bounding subproblems in our computational experiments. In what follows, we will refer to the resulting piecewise-linear adaptive robust formulation as \PWLCSO. Similarly, after applying the same substitution in the \Pref \ model, we obtain the \PWLURC \ model.}

\rmb{The construction has two modeling implications. First, as the segment slopes are nondecreasing, the linear minimization over segment flows fills lower-slope segments before higher-slope segments. For arc loads in $[0,\rho_{\PWLCount}c_a]$, the resulting value is the linear interpolation between consecutive breakpoints of the BPR  function; since this function is convex, the interpolation lies above the true curve on each segment. Second, the last segment is left unbounded, i.e., not considered in~\eqref{eq:arc_capa_model1_pwlbound}. This preserves feasibility when a route assignment creates an arc load above the largest breakpoint, consistent with treating capacity through a soft congestion penalty rather than a hard flow cap. If an arc load exceeds $\rho_{\PWLCount}c_a$, the model returns a value by extending the last linear segment.}

{\color{black}

\section{Complexity Insights}
\label{sec:complexity}

The proposed evacuation planning models belong to the class of two-stage adaptive robust optimization problems with mixed-integer adaptive recourse, a fixed recourse matrix, and an uncertain technology matrix. We first show that this general problem family is computationally difficult, and then analyze the \PWLCSO{} model to identify where the main challenges arise and which substructures remain tractable. We use the finite rational encoding convention\footnote{That is, the complexity results are stated for finitely encoded decision problems: numerical data are rational and binary encoded; route sets, uncertainty set descriptions, and piecewise-linear data are explicitly part of the input; and continuous decisions and uncertainty realizations are represented by rational certificates of polynomial encoding length when polynomial-hierarchy membership is discussed. Under this convention, fixed certificates can be checked in polynomial time.} in complexity class results summarized in Table~\ref{tab:complexity_summary}. 
All proofs are presented in ~\ref{app:complexity_proofs}.

\begin{table}[!htbp]
\centering
\caption{Summary of formal complexity results in Section~\ref{sec:complexity}.}
\label{tab:complexity_summary}
\renewcommand{\arraystretch}{0.85}
\scalebox{0.8}{
\begin{tabular}{p{5.4cm} p{8.2cm} l}
\toprule
\textbf{Scope} & \textbf{Result} & \textbf{Reference} \\
\midrule
General ARO problem class & $\Sigma^p_3$-complete & Prop.~\ref{prop:rhs-to-coeff-ARO} \\
Adaptive recourse problem & Polynomial-time via min-cost network flow & Prop.~\ref{prop:complexity-recourse} \\
Deterministic problem & NP-complete via $p$-median & Prop.~\ref{prop:complexity-det} \\
Robust verification & coNP-hard via CLIQUE & Prop.~\ref{prop:complexity-fixedplan} \\
\midrule
Full \PWLCSO{} with &  $\Sigma^p_2$-hard & Prop.~\ref{prop:complexity-sigma2} \\
bounded polyhedral & In $\Sigma^p_2$ & Prop.~\ref{prop:complexity-sigma2-member} \\
uncertainty set & $\Sigma^p_2$-complete & Cor.~\ref{cor:complexity-sigma2-complete} \\
\bottomrule
\end{tabular}
}
\end{table}

\begin{restatable}{proposition}
{proprhstocoeffARO}
\label{prop:rhs-to-coeff-ARO}
The feasibility problem for the class of two-stage robust programs with linear first-stage constraints, mixed-integer adaptive recourse, a deterministic recourse matrix, deterministic right-hand sides, and continuous budgeted uncertainty that enters affinely in the technology matrix is $\Sigma^p_3$-complete.
\end{restatable}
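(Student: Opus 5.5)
The plan has two halves: membership in $\Sigma^p_3$ under the finite rational encoding convention, and $\Sigma^p_3$-hardness by reduction from $\exists\forall\exists$-3SAT. The budgeted-uncertainty requirement will be met trivially by taking the budget large enough that the set is a box.

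\textbf{Membership.} The decision problem has the form ``there exists a first-stage $x$, such that for all $\xi\in\Xi$, there exists a recourse $(y^{\mathbb{Z}},y^{\mathbb{R}})$ feasible for $W y \le h - T(\xi)x$.'' I will justify the three alternations in turn.

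First, if a feasible first-stage $x$ exists, then one exists with polynomial encoding length. For fixed integer parts, the feasible continuous first-stage points form a polyhedron described by polynomially many constraints (the robust recourse feasibility condition fixes them). Vertex and bounded-integer arguments in the style of Papadimitriou then give a short witness.

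Second, for fixed $x$ the set of ``bad'' $\xi$ is the complement, inside $\Xi$, of a finite union of polyhedra. Each polyhedron is indexed by an integer recourse part $y^{\mathbb{Z}}$ together with an LP basis, and every such polyhedron has rational data of polynomial size. So if a bad $\xi$ exists, one exists at a rational point of polynomial size. I expect to get this by taking a vertex of a cell of the arrangement generated by the defining hyperplanes.

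Third, for fixed $(x,\xi)$ a feasible recourse, if one exists, has polynomial size by the standard MILP witness bound. The innermost check is then polynomial, so the problem lies in $\Sigma^p_3$.

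I expect the second step to be the main technical obstacle. The quantifier over a continuum must be discretized without blowing up the encoding length. I would argue via the observation that infeasibility for fixed $x$ is witnessed by a Farkas-type certificate for each fixed integer recourse. That gives a polynomially describable polyhedral region of bad $\xi$, and such a region contains a small rational point.

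\textbf{Hardness.} I reduce from $\exists\forall\exists$-3SAT: decide whether $\exists a\,\forall b\,\exists c$ such that $\phi(a,b,c)$ holds.

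The first-stage variables are binary $a$, plus one variable $x_0$ fixed to $1$ by the linear constraint $x_0=1$. The uncertainty is $\xi\in[0,1]^{|b|}$, which is a budgeted set with budget $\Gamma=|b|$.

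The recourse consists of binaries $c$ together with, for each universal variable $b_i$, a pair of binaries $z_i^+,z_i^-$ subject to
\begin{equation*}
z_i^+ + z_i^- = 1,\qquad z_i^+ \le 2\xi_i x_0,\qquad z_i^- \le 2(1-\xi_i)x_0 .
\end{equation*}
In these rows $\xi$ enters only as a coefficient on $x_0$, so it is affine in the technology matrix. The recourse matrix and the right-hand sides stay deterministic.

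Each clause of $\phi$ becomes a covering inequality over the literals. Literals in $a$ use the first-stage variables; literals in $b_i$ use $z_i^+$ or $z_i^-$; literals in $c$ use the recourse binaries.

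Correctness rests on one observation about the pair $z_i^\pm$:
\begin{itemize}
\item if $\xi_i\in\{0,1\}$, the pair is forced to encode exactly $b_i=\xi_i$;
\item if $\xi_i$ is fractional, at least one Boolean value of $b_i$ remains available to the recourse.
\end{itemize}
Hence fractional $\xi$ never helps the adversary. The adversary's power is exactly the choice of a Boolean $b$, and the robust program is feasible iff the QBF is true. The construction is clearly polynomial. Combined with membership, this proves $\Sigma^p_3$-completeness.
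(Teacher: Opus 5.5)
Your hardness half is correct, but it takes a different route from the paper. The paper uses Theorem~5 of Goerigk et al.\ (2024) (right-hand-side budgeted uncertainty, mixed-binary recourse) as a black box, and moves $D\xi$ into the technology matrix through a first-stage variable $q$ fixed to $1$. You instead reduce directly from $\exists\forall\exists$-3SAT, using the same fixed-to-one trick on $x_0$. Your pair $z_i^\pm$ forces $b_i=0$ when $\xi_i<1/2$, forces $b_i=1$ when $\xi_i>1/2$, and allows both at $\xi_i=1/2$. So every $\xi$ permits a superset of what some Boolean point permits, and robust feasibility is exactly $\forall b\,\exists c\,\phi(a,b,c)$. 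This is self-contained and shows hardness already for box uncertainty with pure binary recourse; the paper's argument is shorter but depends on the external theorem.

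The genuine gap is in membership, and it sits in Step~1, not Step~2. With integer recourse and $\xi$ multiplying continuous first-stage variables, the robust-feasible first-stage set is in general neither convex nor polyhedral, and it may contain no rational point at all. For example, take first stage $(x_0,x_1,x_2)$ with $x_0=1$, $x_1=2x_2$, $0\le x_2\le 9/10$, $0\le x_1\le 2$, uncertainty $\xi\in[0,1]$ (a one-dimensional budgeted set), binary recourse $y_1,y_2$, and rows
\[
\xi x_1-x_0-2y_1\le 0,\qquad x_2-\xi x_0+2y_1\le 2,\qquad x_0-\xi x_1-2y_2\le 0,\qquad \xi x_0-x_2+2y_2\le 2 .
\]
The recourse matrix and right-hand side are deterministic, and $\xi$ enters the technology matrix affinely, so this instance is in the class. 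Robust feasibility says that for every $\xi$, both ($\xi x_1\le 1$ or $x_2\le\xi$) and ($\xi x_1\ge 1$ or $\xi\le x_2$) hold. Letting $\xi\uparrow x_2$ and $\xi\downarrow x_2$ forces $x_1x_2=1$; the case $x_2=0$ fails at $\xi=1$. Hence the unique feasible first stage has $x_2=1/\sqrt{2}$, and no rational witness exists. The same gadget, using one $\xi$-coordinate per product, expresses bilinear equations $x_ix_j=x_k$ on suitable ranges, so the failure is structural.

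The paper does not attempt such a witness bound. Its finite rational encoding convention stipulates that first-stage decisions and realizations are rational certificates of polynomial length, and membership then follows immediately from the $\exists\forall\exists$ pattern with a polynomial-time check. You should either adopt that convention explicitly, which makes your geometric Steps~1--2 unnecessary, or restrict the class so that uncertainty multiplies only integer or fixed first-stage variables. Your reduction already lives in that restricted class, and the quantified conditions there become linear, so you could redo Step~1 for it. A smaller point in Step~2: the bad set is relatively open in $\Xi$, so a vertex of a closed arrangement cell may lie in a good polyhedron. You need a relative-interior point of the cell, such as a barycenter of its vertices, which still has polynomial size.
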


The result is shown by a reduction from the right-hand-side uncertain ARO family of ~\cite{goerigk2024complexity}, and provides the first insight into why the general class of two-stage adaptive robust optimization problems considered in this paper is computationally difficult. However, \PWLCSO \ has a more specific structure than the general class. In particular, uncertainty enters through nonnegative traffic assignment terms ($\xi_i y_r$);  evacuation routes are explicitly specified as inputs; and adaptive recourse is a shipment problem. This motivates a model-specific complexity analysis to understand which parts of \PWLCSO{} inherit the difficulty of the general two-stage robust optimization problem class and which parts exhibit more exploitable structures.

\begin{definition}
\label{def:PWLCSO}
Given an instance of \PWLCSO{} and a threshold
\(\eta\), the decision problem asks whether there exist feasible first-stage decisions
\((b,s^1,y)\) 
\rmb{whose induced worst-case adaptive objective value is}
at most \(\eta\).
\end{definition}

\subsubsection*{Hardness of the recourse problem.} The inner recourse problem is not the main source of difficulty in \CSO. The first-stage decisions and realized evacuee counts determine the relief demand at shelters and the travel time on each arc. The only remaining adaptive decision is allocating prepositioned relief supplies to shelters, a deterministic shipment problem aimed at minimizing unmet demand. We show that this problem has a network-flow structure.
\begin{restatable}{proposition}
{propcomplexityrecourse}
\label{prop:complexity-recourse}
Given first-stage decisions  $(b,s^1,y)$ and a realization of evacuee counts $\xi$, the optimal recourse decisions in \CSO\ can be computed in polynomial time.
\end{restatable}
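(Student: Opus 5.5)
Fix $(b,s^1,y)$ and $\xi$. Then the shelter demands $D_j:=\sum_{i\in\mathcal{O}}\sum_{r\in\mathcal{R}_{ij}}\xi_i y_r$ and the arc loads $x_a(\xi)$ are fully determined, so the evacuation-time term is a constant (in \CSO\ it is $\lambda\sum_a T_a(x_a)x_a$; in \PWLCSO\ the segment loads $x_a^{\PWLIdx}$ are also recourse variables, but they separate by arc). For \PWLCSO, each arc gives a continuous knapsack: minimize $\sum_{\PWLIdx} t^0_a\PWLSlope_{\PWLIdx}x_a^{\PWLIdx}$ subject to $\sum_{\PWLIdx}x_a^{\PWLIdx}=x_a$ and the bounds \eqref{eq:arc_capa_model1_pwlbound}. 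Because the slopes are increasing, filling segments greedily in order is optimal, which takes $O(\PWLCount)$ time per arc. What remains is the pallet-shipment problem
\[
\min\Big\{\sum_{j\in\mathcal{J}} u_j : u_j\ge D_j - m\sum_{\ell\in\mathcal{L}_j}s^2_{\ell j},\ \sum_{j:\ell\in\mathcal{L}_j}s^2_{\ell j}\le Ks^1_\ell,\ s^2\in\mathbb{Z}_+,\ u\ge 0\Big\}.
\]
At optimality $u_j=\max\{0,D_j-m\sum_\ell s^2_{\ell j}\}$, so the problem is to maximize $\sum_j \min\{D_j, m\sum_\ell s^2_{\ell j}\}$.

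The key step is to remove the mixed term $m\cdot$integer. Note that the benefit of shelter $j$ is a concave piecewise-linear function of its integer pallet count $p_j=\sum_\ell s^2_{\ell j}$. It equals $m p_j$ for $p_j\le \lfloor D_j/m\rfloor$, gains one extra pallet's worth of value $D_j-m\lfloor D_j/m\rfloor$ at $p_j=\lceil D_j/m\rceil$, and is flat afterwards. I would build a bipartite network with source $\to$ trailer node $\ell$ (capacity $Ks^1_\ell$, cost $0$), $\ell\to j$ for $\ell\in\mathcal{L}_j$ (uncapacitated, cost $0$), and $j\to$ sink. The last link is split into two parallel arcs: one with capacity $\lfloor D_j/m\rfloor$ and cost $-m$, and one with capacity $1$ and cost $-(D_j-m\lfloor D_j/m\rfloor)$. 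A min-cost flow on this network (not forced to saturate) fills the cheaper arc first thanks to concavity. Its negated cost therefore equals the maximal satisfied demand, and unmet demand is $\sum_j D_j$ minus it.

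Correctness follows in two directions. Any integer shipment $s^2$ induces a flow of equal or better cost. Conversely, because the constraint matrix is a network matrix and all capacities are integral, an optimal integral flow exists; it gives $s^2$, and from it we read off $u_j$. Arc capacities are $\lfloor D_j/m\rfloor$ and $Ks^1_\ell\le KS$, so they may be exponential in value but have polynomial encoding length. Strongly or weakly polynomial min-cost flow algorithms (e.g.\ capacity scaling) therefore run in time polynomial in the input size, including the rational encoding of $\xi$ and $y$ used to compute $D_j$.

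I expect the main obstacle to be the concave-cost splitting argument: showing the fractional last pallet is handled exactly and that no optimal flow uses the unit arc before the bulk arc is saturated. When $D_j/m$ is integral, the second arc has cost $0$ and is harmless. Since $D_j-m\lfloor D_j/m\rfloor<m$, the costs are ordered correctly, so an exchange argument settles it.
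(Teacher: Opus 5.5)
Your proposal is correct and follows the paper's proof essentially step for step. Both use the same source--trailer--shelter--sink min-cost flow, split the shelter-to-sink link into a full-pallet arc of capacity $\lfloor D_j/m\rfloor$ and a unit residual arc, rely on the cost-ordering argument ($r_j<m$), and conclude by flow integrality and polynomial-time min-cost flow algorithms. The differences are cosmetic: the paper uses explicit unused-pallet arcs $L_\ell\to t$ and proves the result for general shelter weights $w_j\ge 0$, whereas you let the flow be non-saturating and also dispose of the PWL segment variables by greedy filling, which the paper leaves implicit.
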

As a result, the computational difficulty of \CSO{} lies not in distributing relief supplies under a realized scenario, but in choosing first-stage decisions that perform well across all realizations.

\subsubsection*{Hardness of the \PWLCSO{}.} We next show the difficulty of the specific decisions at different levels of the \PWLCSO{}\ and the hardness of the overall problem. In particular, the first result demonstrates the difficulty of first-stage decisions even in the absence of uncertainty, while the second result shows the difficulty of verifying the worst-case performance of a given set of first-stage decisions under uncertainty in the evacuee counts. 
\begin{restatable}{proposition}
{propcomplexitydet}
\label{prop:complexity-det}
The deterministic decision version of \PWLCSO \ is NP-complete.
\end{restatable}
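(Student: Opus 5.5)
We prove membership and hardness separately. The deterministic version has a single fixed realization $\Xi=\{\bar\xi\}$. The question is whether there exist first-stage decisions $(b,s^1,y)$ and recourse $(s^2,u,x^{\PWLIdx})$ satisfying the \PWLCSO{} constraints with objective at most $\eta$.

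\textbf{Membership in NP.} A certificate consists of $b$, $s^1$, and $y$. Given these, the recourse can be computed in polynomial time by Proposition~\ref{prop:complexity-recourse}. With $\xi$ fixed, the segment flows $x_a^{\PWLIdx}$ are obtained greedily, filling lower-slope segments first since $\PWLSlope_{\PWLIdx}$ is increasing. Checking the objective against $\eta$ is then polynomial. The integer variables have polynomially bounded encoding: $b$ is binary and $0\le s^1_\ell\le S$. The continuous $y$ could also be handled directly, since fixing $(b,s^1)$ leaves an LP over $y$, $u$, $x^{\PWLIdx}$ with the integral $s^2$ relaxed. A cleaner route is to include $y$ in the certificate and appeal to the rational encoding convention: an optimal vertex of the polyhedron in $(y,u,x,s^2)$ has polynomial size once $s^2$ is fixed. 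I will therefore take the certificate to be $(b,s^1,y,s^2)$, with $y$ a vertex solution of the LP obtained after fixing the integer variables.

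\textbf{NP-hardness via $p$-median.} Take a $p$-median instance with clients $V$, facilities $F$, distances $\delta_{ik}$, $p$, and threshold $D$ (NP-complete in its decision form). The construction is as follows:
\begin{itemize}
\item Set $\mathcal{O}=V$, $\mathcal{J}=F$, $N=p$, and $\bar\xi_i=1$.
\item For each pair $(i,k)$, create a single route $r_{ik}$ using a private arc $a_{ik}$ with free-flow time $t^0_{a_{ik}}=\delta_{ik}$ and very large capacity, so every load stays in the first PWL segment.
\item Rescale so that $\PWLSlope_1=1$; the evacuation-time term is then $\sum_{i,k}\delta_{ik}y_{r_{ik}}$.
\item Set $\hat d_{ik}=d_{r_{ik}}$, so each origin has one route per shelter.
\item Kill the relief term: $S=0$ with all $\mathcal{L}_j=\emptyset$ gives $u_j=\sum_i y_{r_{ij}}$, so $\sum_j u_j=|V|$ is constant. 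Alternatively, $\lambda$ can be made large, but the constant offset is simpler.
\item Set the threshold $\eta=|V|+\lambda D$.
\end{itemize}

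\textbf{The acceptability constraint~\eqref{eq:route_restict_model1}.} This is the main obstacle. If we choose route lengths $d_{r_{ik}}=\delta_{ik}$, the constraint says evacuees from $i$ may only use routes no longer than the route to any open shelter. This forces assignment to a closest open shelter, with ties split. This is exactly the $p$-median assignment structure and costs nothing. Given an open set, assigning each client to its nearest open facility is feasible and attains the $p$-median cost. Conversely, any feasible $y$ places weight only on nearest open shelters, so the cost equals the $p$-median cost of $\{k:b_k=1\}$. Hence a $p$-median solution of cost at most $D$ exists if and only if the constructed instance has objective at most $\eta$.

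The delicate points to verify are two. First, the fractional $y$ permitted by \PWLCSO{} does not lower the cost below the integral assignment; this holds by linearity, since all support lies on equal-distance nearest routes. Second, the capacity scaling must keep all loads (at most $|V|$) within the first segment. The construction is polynomial, which establishes NP-completeness.
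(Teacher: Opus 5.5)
Your proof is correct and is essentially the paper's reduction: exact-$p$ median with one private single-arc route per origin--shelter pair, unit demands, all loads kept on the first PWL segment, and the relief term neutralized. The differences are small: you let the acceptability constraint bind with $d_{r_{ik}}=\delta_{ik}$ (forcing nearest-open-shelter assignment, which the paper's LP selects anyway when it makes all lengths equal), you make unmet demand the constant $|V|$ via $S=0$ instead of zero via ample pallets, and you justify polynomial-size certificates by a vertex argument after fixing $(b,s^1,s^2)$ rather than by the encoding convention.

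Two small fixes are needed. First, $\mu_1=1+\alpha\rho_1^{\beta}$ is fixed by the PWL data, so ``rescale so that $\mu_1=1$'' should be read as setting $t^0_{a_{ik}}=\delta_{ik}/\mu_1$ (as the paper does) or $\eta=|V|+\lambda\mu_1 D$. Second, drop your aside about relaxing the integrality of $s^2$, since that changes the optimal value.
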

Consequently, optimizing the first-stage decisions in \PWLCSO\  requires solving an NP-hard problem even in the absence of uncertainty. We next show that verifying whether the total worst-case evacuation time remains below a given threshold for fixed first-stage decisions is coNP-hard.

\begin{restatable}{proposition}
{propcomplexityfixedplan}
\label{prop:complexity-fixedplan}
For fixed first-stage decisions in \PWLCSO{}, verifying whether the total worst-case evacuation time remains below a given threshold is coNP-hard.
\end{restatable}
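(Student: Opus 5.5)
We reduce from the complement of CLIQUE. Take an instance of CLIQUE: a graph $G=(V,E)$ with $|V|=n$ and an integer $k$. It is convenient to use the Motzkin--Straus characterization. The maximum of $\xi^\top A_G \xi$ over the simplex $\{\xi\ge 0,\ \sum_v \xi_v=1\}$ equals $1-1/\omega(G)$, where $A_G$ is the adjacency matrix. This maximization is the canonical coNP-hard ``maximize a convex-ish quadratic over a polytope'' problem, and it is exactly the shape of our worst-case evacuation time. With the first-stage plan fixed, the arc loads $x_a(\xi)=\sum_i \xi_i \sum_{r\ni a} y_r$ are linear in $\xi$. The evacuation-time term $\sum_a t^0_a g$ (PWL) is therefore a convex piecewise-linear function of $\xi$, and maximizing a convex function over a polytope is the hard direction. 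Because the true BPR term is nonconvex in its combinatorial interaction only through sums, I would instead work directly with the PWL structure and use a clique-style construction rather than Motzkin--Straus. The target is a threshold that is exceeded iff $\omega(G)\ge k$.

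\textbf{Construction.} Create one origin $i_v$ for each vertex $v\in V$ and a single open shelter. Give each origin a fixed route, so $y$ is fixed with one route per origin. For every non-edge $\{u,w\}\notin E$ with $u\neq w$, create a dedicated ``conflict arc'' $a_{uw}$ traversed by the routes of both $u$ and $w$. Every other arc on the routes is private to a single origin. Use the uncertainty set $\Xi=\{\xi\in[0,1]^n:\sum_v\xi_v\le k\}$, a bounded polyhedral budget set. Choose the capacity $c=1$ and a PWL function for the conflict arcs with breakpoints $\rho_1=1$, slope $0$ (or tiny) on $[0,1]$, and a large slope $M$ beyond. Since the paper requires positive increasing slopes, use slope $\epsilon$, then $M$, and absorb $\epsilon$ into the threshold. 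Then a conflict arc contributes roughly $M\max(0,\xi_u+\xi_w-1)$, which penalizes loading both endpoints of a non-edge. Private arcs contribute a reward-like linear term $\sum_v \xi_v$ with unit slope.

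\textbf{The catch.} The penalty on conflict arcs \emph{increases} evacuation time, so the adversary would happily load non-edges. To flip the sign, I therefore reverse the roles. Make the conflict arcs be the \emph{edges} of $G$, so the adversary gains $M\max(0,\xi_u+\xi_w-1)$ per edge, and use the budget $\sum\xi_v\le k$. Since the objective is convex in $\xi$, its maximum is attained at a vertex of $\Xi$, which is a 0/1 vector with at most $k$ ones. At such a vertex the objective equals $M\cdot|E(S)|+(\text{linear terms})$ for the support set $S$. Hence the worst case is at least $M\binom{k}{2}+\text{const}$ iff $G$ has a $k$-clique. We need the linear parts to be identical for all $|S|=k$ and no better for smaller $|S|$, which is ensured by the unit-slope private arcs and $\epsilon$ small. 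The answer ``worst-case time $\le \eta$'' with $\eta=M\binom{k}{2}-1/2$ plus constants is then YES iff there is no $k$-clique, giving coNP-hardness, and the construction is clearly polynomial.

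\textbf{Main obstacle.} The hard part is making the network and PWL data conform exactly to the model's requirements. The routes must be genuine paths in a single network through which each route passes the edge arcs of all its incident edges. This is achievable by chaining arcs in series along a path from $i_v$ to the shelter; path order is irrelevant since loads are additive. The breakpoints must also be common across arcs. I would address the latter by scaling the capacities: conflict arcs get $c_a=1$, and private arcs get a huge capacity so they stay in the first segment. Route acceptability constraints are trivial with one shelter, and supplies are set to cover all demand so unmet demand vanishes, isolating the evacuation-time term.
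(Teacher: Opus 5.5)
Your reduction matches the paper's structurally: complement of CLIQUE, one origin per vertex routed to a single shelter, one shared arc per edge, the set $\{\xi\in[0,1]^n:\sum_v\xi_v\le k\}$, and convexity plus positive slopes putting a worst case at a $0/1$ vertex with $|S|=k$. The gap is your claim that the linear part of the objective is the same for every $k$-subset.

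At a vertex $S$, each shared arc $a_e$ pays its first-segment cost once for each endpoint of $e$ in $S$. So the value contains $\mu_1\sum_{v\in S}\deg(v)$ in addition to the increment $\gamma|E(S)|$, where $\gamma=\mu_2-\mu_1$. Also, chaining $\deg(v)$ shared arcs into one path needs $\deg(v)+1$ private connectors, so your private part depends on degrees as well.

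Your remedy, a tiny first slope $\epsilon$ and a huge second slope $M$, is not a freedom \PWLCSO{} grants. The slopes are secant slopes of $g(\rho)=\rho+\alpha\rho^{\beta+1}$ at breakpoints common to all arcs. An arc's free-flow time rescales $\mu_1$ and $\gamma$ by the same factor, and moving the kink via the capacity only lowers their ratio. With the second breakpoint at the maximum load $2$ (the paper uses breakpoints $\{1,2\}$), one gets $\gamma/\mu_1<2^{\beta+1}-2$. That constant cannot dominate a degree-dependent fluctuation of order $\mu_1 k\Delta(G)$.

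The paper removes this term by padding rather than by slope separation. Each route $r_v$, with its $\deg(v)$ shared arcs and $\deg(v)+1$ connectors, is extended by route-private dummy arcs to exactly $R=2\Delta(G)+1$ arcs, with $c_a=t^0_a=1$. Every $k$-subset then has value exactly $\mu_1Rk+\gamma|E(S)|$, any $\mu_1<\mu_2$ suffices, and the threshold $\Theta=\mu_1Rk+\gamma\bigl(\binom{k}{2}-\tfrac12\bigr)$ separates clique from no-clique instances.

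If you want to keep unequal route lengths, replace \emph{$\epsilon$ small} by an explicit condition and show it is attainable in the model. For example, let $L_v$ be the number of arcs on $r_v$ and use the threshold $\mu_1k\max_vL_v+\gamma\bigl(\binom{k}{2}-1\bigr)$. This works whenever $\mu_2-\mu_1>2\mu_1k\Delta(G)$. You can meet that condition by placing $\rho_2$ far beyond the kink at load $1$, since the last segment is unbounded and $\mu_2$ grows like $\alpha\rho_2^{\beta}$. With either repair, the rest of your argument goes through.
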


Propositions \ref{prop:complexity-det} and \ref{prop:complexity-fixedplan} demonstrate that both \rmb{optimizing} 
the first-stage decisions and verifying their robustness are 
\rmb{computationally} 
difficult in \PWLCSO{}. The following results yield a specific complexity classification for \PWLCSO.

\begin{restatable}{proposition}
{propcomplexitysigmahard}
\label{prop:complexity-sigma2}
\rmb{The decision version of \PWLCSO{} with a bounded polyhedral uncertainty set is $\Sigma^p_2$-hard.}
\end{restatable}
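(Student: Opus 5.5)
The plan is to reduce from $\forall\exists$-complementary \textsc{3-SAT} in its $\Sigma^p_2$-complete form: given a 3-CNF $\psi(x,w)=\bigwedge_{t} C_t$, decide whether $\exists x\in\{0,1\}^p\ \forall w\in\{0,1\}^q:\ \psi(x,w)$. The two quantifier levels map onto the two levels of \PWLCSO{}. The existential block $x$ is carried by the first-stage decisions $(b,y)$. The universal block $w$ is carried by the worst case over a bounded polyhedral set $\Xi$. Setting $\eta=0$ (or a small constant offset), the instance should have a plan of worst-case value at most $\eta$ iff $\psi$ holds for all $w$ under the chosen $x$. By Definition~\ref{def:PWLCSO} this gives $\Sigma^p_2$-hardness. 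Relief supplies are neutralized by taking $S=0$ and making shelters incur no unmet demand. For example, use $\lambda$ large and have every evacuee flow end at an $x$- or $w$-origin node whose shelter demand is offset by pallets fixed through a trivial $\mathcal{L}_j$. So only the congestion term matters.

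\textbf{Universal gadget.} For each $w_k$, create two origins $o_k^+$ and $o_k^-$ with the polyhedral constraints $\xi_{o_k^+}+\xi_{o_k^-}=1$ and $0\le\xi\le 1$. Each origin has a single route to a dedicated always-open shelter. For every clause $C_t$, create a clause arc $a_t$ with capacity normalized so that its piecewise-linear cost is $0$ (up to the free-flow part, which we subtract into $\eta$) for load at most $2$, and has slope $M$ above $2$. This uses two breakpoints of the common ratio grid with the increasing slopes of Section~\ref{ttf}. The route of $o_k^-$ passes through $a_t$ whenever $w_k\in C_t$, and the route of $o_k^+$ does so whenever $\neg w_k\in C_t$. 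Then the load of $a_t$ counts the \emph{false} $w$-literals of $C_t$. The objective is a convex piecewise-linear function of $\xi$, so its maximum over the polytope $\Xi$ is attained at a vertex, that is, at a Boolean $w$. This is the same mechanism used in the coNP-hardness proof of Proposition~\ref{prop:complexity-fixedplan}, and I would reuse that argument.

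\textbf{Existential gadget.} For each $x_h$, create an origin $e_h$ with $\xi_{e_h}$ fixed to $1$ in $\Xi$, and a pair of candidate shelters $j_h^{T}$ and $j_h^{F}$. The route to $j_h^{F}$ passes through every $a_t$ with $x_h\in C_t$, and the route to $j_h^{T}$ through every $a_t$ with $\neg x_h\in C_t$. Then the load on $a_t$ equals the number of false literals of $C_t$, which is $3$ exactly when $C_t$ is violated. Hence the worst-case value is at most $\eta$ iff no $w$ violates a clause, given the encoded $x$.

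\textbf{Main obstacle: integrality of the existential choice.} The routing variables $y_r$ are continuous. A split such as $y=\tfrac12$ would push every clause load below $3$ and trivially pass the test. I would force integrality through the binary $b$ and the acceptability constraints~\eqref{eq:route_restict_model1}. Set $N=p$ plus the number of always-open shelters. Add a large penalty, for example a high-cost arc on every route from $e_h$ to shelters of other pairs, so that an optimal plan opens exactly one shelter of each pair. Alternatively, use a small extra ``guard'' origin per pair whose only acceptable routes go to $j_h^T$ or $j_h^F$; constraint~\eqref{eq:route_open_shelter_model1} then makes opening neither shelter infeasible. Once only one shelter of pair $h$ is open, constraint~\eqref{eq:route_open_shelter_model1} forces $y$ for $e_h$ entirely onto that shelter's route, so $x_h$ is Boolean. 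Making the path lengths $d_r$ and $\hat d$ consistent so that acceptability never excludes the intended routes, and checking that the always-open shelters do not make the clause routes unacceptable, is the delicate bookkeeping step. I would handle it by giving every route of a given origin the same length, so that $\hat d_{ik}$ equals that length and~\eqref{eq:route_restict_model1} is vacuous. Finally I would verify that the construction is polynomial and that both directions of the equivalence hold.
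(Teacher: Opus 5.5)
There is a genuine gap, and it lies in the source problem rather than in the gadgets. You reduce from $\exists x\,\forall w:\ \psi(x,w)$ with $\psi$ a 3-CNF, but this problem is in NP, not $\Sigma^p_2$-complete. The universal quantifier distributes over the conjunction, so $\forall w\,\psi(x,w)$ holds iff every clause holds for all $w$. For a proper clause this means it contains an $x$-literal made true by $x$. The question is therefore plain satisfiability of the $x$-parts of the clauses. With $\eta$ set to the constant offset, your reduction gives at most NP-hardness, which Proposition~\ref{prop:complexity-det} already provides. The $\Sigma^p_2$-complete problem (the complement of $\forall\exists$-3SAT) carries a negation: $\exists x\,\forall w:\ \neg\psi(x,w)$, i.e., every $w$ leaves \emph{some} clause violated.

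That negation cannot be fed into your clause gadget. In \PWLCSO{} the adversary maximizes a convex PWL penalty whose linear part is constant. So ``worst-case value $\le\eta$'' can only say ``for every $\xi$, few high-load events occur''. A high-load event on an arc is a conjunction, such as ``all literals routed through the arc are false''. Your arc $a_t$ penalizes violated clauses, which is the wrong direction for ``some clause is violated for every $w$''.

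The missing idea is the paper's sign-variant gadget. For each clause $d_k$, create one shared arc $a_{kg}$ for each of the seven sign variants $g\neq d_k$, placed on the three routes that make the literals of $g$ false. Any assignment falsifies exactly one of the eight sign variants. Hence some $a_{kg}$ reaches load $3$ iff $d_k$ is \emph{satisfied}. Pad all routes to $R$ unit arcs and keep total demand constant at $n+M$. Then at extreme points the evacuation time is $\mu_1R(n+M)+\delta\,\mathrm{sat}_F(\chi,\upsilon)$. The threshold $\eta=\mu_1R(n+M)+\delta(q-1)$ then encodes exactly $\exists\chi\,\forall\upsilon:\neg F(\chi,\upsilon)$. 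Keeping your violation-counting arcs would instead require a $\Sigma^p_2$-hard threshold problem on the number of violated clauses. That is not a standard source and would need its own proof.

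The rest of your construction matches the paper's and goes through. This includes the universal pairs with $\xi_{o^+}+\xi_{o^-}=1$, the extreme-point argument via convexity, existential origins fixed at $\xi=1$ with a tight $N$ forcing exactly one open shelter per pair, and equal route lengths making~\eqref{eq:route_restict_model1} vacuous. The guard origin is unnecessary, since $e_h$ already routes only to its own pair. Two details need correcting. With $S=0$, unmet demand equals the constant total demand rather than zero, which is still harmless. And the first-segment term is constant only because every route has the same number of unit arcs.
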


\rmb{The proof reduces from the quantified complement-form 
 threshold problem for Max-3SAT. Shelter-opening decisions encode the existential variables, the extreme points of the uncertainty set encode the universal variables, and the PWL approximation of the evacuation time is used to count the satisfied clauses.}
\rmb{This hardness result is next matched by a corresponding upper-bound result.}

\begin{restatable}{proposition}
{propcomplexitysigmamember}
\label{prop:complexity-sigma2-member}
\rmb{The decision version of \PWLCSO{} with a 
polyhedral uncertainty set is in $\Sigma^p_2$.}
\end{restatable}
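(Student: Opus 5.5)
We show membership by exhibiting a $\exists\forall$ certificate structure: the existential player guesses a first-stage plan, the universal player supplies a realization, and the remaining check is polynomial.

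\textbf{Existential step.} The existential certificate is a first-stage decision $(b,s^1,y)$. Here $b$ is binary of length $|\mathcal{J}|$. The vector $s^1$ is integral with $\sum_\ell s^1_\ell=S$, so its encoding length is polynomial in the binary encoding of $S$. For $y$, we must argue that if some feasible plan achieves value at most $\eta$, then one with polynomially bounded encoding does as well. Our plan is to fix $b$ and $s^1$, note that the routing feasibility region defined by \eqref{eq:route_assign_model1}--\eqref{eq:route_restict_model1} is a polytope, and show that the worst-case value is a piecewise function of $y$ whose pieces are described by polynomial-size rational data.

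\textbf{Universal step and verification.} For the $\forall$ part, we would like to restrict $\xi$ to vertices of $\Xi$, which have polynomial encoding length. This needs the worst-case objective to be convex in $\xi$ for fixed $y$. The evacuation-time term is
\[
\sum_a t^0_a\,\phi_a\Big(\sum_{r\ni a}\xi_i y_r\Big),
\]
where each $\phi_a$ is the convex piecewise-linear interpolant with increasing slopes, since the minimization fills the cheapest segments first. This term is therefore convex in $\xi$. The unmet-demand term, by Proposition~\ref{prop:complexity-recourse}, equals the optimal value of a min-cost flow problem. Viewed as
\[
\min_{s^2}\ \sum_j \max\Big\{0,\ D_j(\xi)-m\sum_{\ell\in\mathcal{L}_j} s^2_{\ell j}\Big\},
\]
with $D_j(\xi)$ linear in $\xi$, it is a minimum over integer $s^2$ of convex functions, which need not be convex. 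We handle this by not insisting on vertices: the universal certificate is simply some $\xi\in\Xi$ of polynomial size. Since the recourse value is the minimum over finitely many integer shipment plans $s^2$ of convex piecewise-linear functions of $\xi$, the region where the objective exceeds $\eta$ is a finite union of polyhedra defined by the input data. Hence if a violating $\xi$ exists, a violating vertex of one of these polyhedra exists with polynomial encoding length by standard LP vertex bounds.

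The verifier, given $(b,s^1,y)$ and $\xi$, does three things in polynomial time:
\begin{itemize}
\item checks first-stage feasibility;
\item computes arc loads and the PWL evacuation time by greedy segment filling;
\item solves the recourse problem via Proposition~\ref{prop:complexity-recourse}.
\end{itemize}
It accepts iff the resulting value is at most $\eta$. The instance is then a yes-instance iff $\exists (b,s^1,y)\ \forall \xi$ the verifier accepts, which is exactly $\Sigma^p_2$.

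\textbf{Main obstacle.} The hardest step is bounding the encoding length of $y$. Because $\xi_i y_r$ is bilinear, fixing $\xi$ does not give a polytope in $y$ directly. Our approach is as follows.
\begin{enumerate}
\item Fix $(b,s^1)$ and the finitely many combinatorial ``types'': the active PWL segment on each arc and an optimal integer shipment plan.
\item Note that the set of $y$ with worst-case value at most $\eta$ is
\[
\{y:\ \max_{\xi\in V}F(y,\xi)\le\eta\},
\]
where $V$ is a suitable finite, input-bounded set of candidate realizations (the polyhedral pieces above).
\item Observe that for each fixed $\xi\in V$, the condition $F(y,\xi)\le\eta$ is defined by linear inequalities in $y$, after using the LP epigraph of the convex PWL term and a fixed shipment choice. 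The feasible set for $y$ is therefore a finite union of polyhedra with polynomially encoded constraints.
\item Conclude that, if nonempty, this set contains a vertex of polynomial size.
\end{enumerate}
If making $V$ explicit proves delicate, the fallback is to invoke the paper's finite rational encoding convention, under which continuous decisions are represented by rational certificates of polynomial length, and argue only the verifier's polynomiality.
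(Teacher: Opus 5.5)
Your $\exists\forall$ structure and verifier are exactly the paper's proof: first-stage feasibility check, PWL evaluation of arc loads, and recourse via Proposition~\ref{prop:complexity-recourse}. The paper does not bound the encoding length of $y$ or $\xi$ at all. It simply invokes the finite rational encoding convention in the footnote of Section~\ref{sec:complexity}, under which continuous decisions and realizations are taken to be rational certificates of polynomial length. Your fallback is therefore precisely the paper's argument.

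The self-contained bound you sketch for $y$, however, breaks at step 2. The polyhedral pieces from your universal step were built for a \emph{fixed} $y$. Demand enters as $\xi_i y_r$ and the integer recourse value is nonconvex in $\xi$, so worst-case realizations are in general not vertices of $\Xi$, and they move with $y$. Hence there is no $y$-independent finite set $V$.

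Here is a concrete instance.
\begin{itemize}
\item Two open shelters $A,B$ are served by one flexible trailer location holding a single pallet, with $m=1$.
\item Origin $1$ sends fraction $y$ to $A$ and $1-y$ to $B$; origin $2$ goes only to $B$.
\item $\Xi=\{\xi\ge 0:\xi_1+\xi_2=1\}$, and the arc data make the PWL evacuation time a constant plus $c\,\xi_1$, with $0<c<1/2$.
\end{itemize}
The recourse value is $\min\{\xi_1 y,\,1-\xi_1 y\}$. For $y\ge 1/2$ the worst case is the interior point $\xi_1=1/(2y)$, with value $\tfrac12+\tfrac{c}{2y}$ plus the constant.

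Now take two independent copies of this gadget, with variables $y,y'$. Within $[1/2,1]^2$, the set of acceptable $(y,y')$ has a boundary that is an arc of the curve $c/y+c'/y'=\text{const}$. That set is not a finite union of polyhedra, so steps 3--4 cannot apply. Proving that a polynomial-size witness $y$ exists would need a different tool, e.g.\ size bounds for semialgebraic sets; the paper sidesteps this by convention.

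A minor point on your universal step: the violating region is cut out by strict inequalities. You should argue for a violating \emph{point} of polynomial size rather than a violating vertex. Facet-complexity bounds still give this, despite the pseudo-polynomially many shipment plans.
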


The first-stage decisions constitute the existential certificate, while the realization of uncertainty serves as the universal certificate. Once they are fixed, the predicate determining whether a scenario value exceeds a given threshold can be verified by computing the optimal adaptive recourse in polynomial time by Proposition~\ref{prop:complexity-recourse}. 
\rmb{Combining membership with hardness yields the completeness classification.}

\begin{restatable}{corollary}
{corcomplexitysigmacomplete}
\label{cor:complexity-sigma2-complete}
\rmb{The decision version of \PWLCSO{} with a bounded polyhedral uncertainty set is $\Sigma^p_2$-complete.}
\end{restatable}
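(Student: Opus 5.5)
The corollary follows by combining the two preceding propositions, once we check that they concern the same decision problem. By Proposition~\ref{prop:complexity-sigma2}, the decision version of \PWLCSO{} (Definition~\ref{def:PWLCSO}) restricted to bounded polyhedral uncertainty sets is $\Sigma^p_2$-hard. By Proposition~\ref{prop:complexity-sigma2-member}, the decision version with an arbitrary polyhedral uncertainty set lies in $\Sigma^p_2$. Bounded polyhedral sets are a special case of polyhedral sets, and the input encoding is the same finite rational convention in both results. Hence the restricted problem inherits membership: a $\Sigma^p_2$ procedure for the general problem solves every bounded instance. Together with hardness, this gives $\Sigma^p_2$-completeness.

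The membership statement needs care about which certificates are quantified, because boundedness is exactly what makes the universal quantifier well posed. The existential certificate is the first-stage plan $(b,s^1,y)$. The integer parts $b$ and $s^1$ are bounded by $N$ and $S$. For the continuous routing variables $y$, we need a rational certificate of polynomial encoding length, as the paper's encoding footnote allows. The universal certificate is a realization $\xi\in\Xi$. For fixed $(b,s^1,y)$, the recourse value is a function of $\xi$ that is piecewise linear in the traffic terms and integer-constrained in the shipment terms. When $\Xi$ is a bounded polytope, it suffices to let the universal quantifier range over rational points of polynomial size.

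Once both certificates are fixed, the inner predicate ``scenario value $\le\eta$'' is decided in polynomial time. Proposition~\ref{prop:complexity-recourse} handles the shipment recourse, and the PWL traffic term is computed by filling segments in order of increasing slope. This places the restricted problem in $\Sigma^p_2$. Since the reduction in Proposition~\ref{prop:complexity-sigma2} produces instances whose uncertainty sets are bounded polytopes (a box or cube encoding the universal variables through its extreme points), its image lies inside the restricted class. The hardness and membership statements therefore apply to the same language.

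The one potential obstacle is a mismatch in hypotheses. If the membership proposition had secretly depended on vertex enumeration, which requires boundedness, while the hardness proposition used a general polytope, the combination would not go through. Under the stated hypotheses, both apply to bounded polyhedral $\Xi$, so the corollary is immediate.
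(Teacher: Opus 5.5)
Your proposal is correct and takes essentially the same approach as the paper, which obtains the corollary in one line. The paper combines the $\Sigma^p_2$-hardness of Proposition~\ref{prop:complexity-sigma2}, whose reduction does produce a bounded rational polytope, with the $\exists\forall$ membership argument of Proposition~\ref{prop:complexity-sigma2-member}, which the bounded subclass inherits. Your remarks on polynomial-size rational certificates for $y$ and $\xi$ just re-invoke the paper's finite rational encoding convention, and the membership proof never uses vertex enumeration, so the hypothesis mismatch you flag does not arise.
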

}

These complexity insights provide a concrete rationale for our solution approach. Since a central difficulty in  \PWLCSO{} arises due to the coupling between the first-stage decisions and adversarial evacuee count realizations, Section~\ref{S4:solution_approach} proposes two complementary bounding subproblems of the fully adaptive robust model: a finite-scenario relaxation that restricts \(\Xi\) to specific realizations to generate lower bounds, and a partition-based finitely adaptable restriction that assigns one recourse decision vector to each partition of \(\Xi\) to generate upper bounds. These subproblems do not eliminate the difficulty of the first-stage decisions; rather, in the piecewise-linear formulation used here, they yield MILP-representable bounding problems that are amenable to general-purpose mixed-integer optimization solvers.

\section{Solution Method}\label{S4:solution_approach}
A widely applied exact framework for two-stage adaptive robust optimization is constraint-and-column generation (CCG)~\citep{zeng2013}. In models with continuous recourse, the adversarial separation problem of CCG can be reformulated as a single-level problem by applying strong duality or optimality conditions to the inner minimization. In our setting, the adaptive pallet-shipment variables \(s^{2}_{\ell j}\) are integer-valued, so these duality-based reformulations are not directly applicable, and the separation problem remains computationally difficult (Proposition~\ref{prop:complexity-fixedplan}). 
A widely used approximation framework for two-stage adaptive robust optimization is the $K$-adaptability approach. Rather than optimizing a fully adaptive recourse policy, 
it determines $K$ candidate recourse decisions in advance, and the best feasible decision is selected after the uncertainty is realized. This induces an implicit partition of the uncertainty set~\citep{subramanyam2020k}.

We implement a partition-and-bound method that explicitly splits the uncertainty set into smaller subsets at each iteration and optimizes a separate recourse decision for each subset.  The computational efficiency and solution quality of this approach are primarily determined by the uncertainty set splitting strategy~\citep{dunning2016paper}. We design
 a targeted splitting rule to refine the recourse decisions at each iteration. The proposed rule selects an {\it active partition} whose value attains the current upper bound and splits {\it only} that partition into two child partitions using critical scenarios obtained from a slack-minimization subproblem over the second-stage constraints.
 The solution method is not tied to a particular uncertainty set; it is applicable whenever the corresponding bounding problems and slack-minimization subproblems can be formulated and solved.

\subsection{Partition-and-Bound Algorithm}

Figure \ref{F:new-problem-description} presents the main steps of our partition-and-bound algorithm, 
which maintains a binary tree: the root node represents the full uncertainty set $\Xi$, and the leaves represent its partitions. Each iteration solves the finite-scenario relaxation~\eqref{zLBmodel} to update the lower bound (Section~\ref{lowerboundsection}) and the finitely adaptable robust problem~\eqref{zpartmodel} to update the upper bound and incumbent first-stage plan (Section~\ref{upperboundsection}). If the termination criterion is not met, the algorithm selects an active leaf partition whose value attains the current upper bound, identifies two critical scenarios in that partition, and splits the active leaf into two child partitions (Section~\ref{selectactive}).

\begin{figure}[h!]
    \centering    
    \small
    \begin{tikzpicture}[
        node distance = 0.5cm and 0.5cm, 
        auto,
        process/.style={
            rectangle, 
            draw,  
            text width=35em, 
            text centered, 
            rounded corners, 
            minimum height=3.5em
        },
        process1/.style={
            rectangle, 
            draw,  
            text width=4em, 
            text centered, 
            rounded corners, 
            minimum height=3em
        },
        decision/.style={
            diamond, 
            draw, 
            text width=8em, 
            text centered, 
            minimum height=2em,
            aspect=2 
        },
        terminator/.style={
            rectangle, 
            draw, 
            text width=30em, 
            text centered, 
            rounded corners, 
            minimum height=3em
        },
        line/.style={
            draw, 
            -{Stealth[length=2mm]}
        }
    ]

    \node [terminator] (init) {Initialize LB = $-\infty$, UB=$\infty$, iteration count $k=0$, \\ and critical scenario set $\mathcal{P} = \{\hat{\xi}_0\}$ with an \rmb{initial scenario} $\hat{\xi}_0 \in \Xi$.};
    
    \node [process, below=of init, yshift=-0.3cm] (LB_ARMIO) {
    Set $k \leftarrow k+1$. Solve the lower-bounding problem~\eqref{zLBmodel} to \rmb{obtain} $z_\text{LB}(\mathcal{P})$.\\ Update LB \(\rmb{\leftarrow}\) max\{LB, $z_\text{LB}(\mathcal{P})$\}.};
    
    \node [process, below=of LB_ARMIO, yshift=-0.3cm] (UB_ARMIO) {Solve the upper-bounding problem~\eqref{zpartmodel} to \rmb{obtain} $z_\text{UB}(\mathcal{P} )$ and a first-stage solution.\\ Update UB \(\rmb{\leftarrow}\) min\{UB, $z_\text{UB}(\mathcal{P})$\} and the incumbent first-stage solution.
    };

    \node [decision, below=of UB_ARMIO, yshift=-0.3cm] (decision) {\rmb{Is the termination criterion met?}};
           
    \node [terminator, right=of decision, xshift=0.98cm, text width=6em] (terminate) {Return the optimality gap and the incumbent first-stage solution.};

    \node [process, below=of decision, yshift=-0.3cm] (split) {
    \rmb{Select an active leaf partition $\Xi(\hat{\xi}_{p^\star})$ such that $z_\text{UB}^{p^\star}(\mathcal{P})=z_\text{UB}(\mathcal{P})$.}\\
    Determine two critical scenarios $\{\hat{\xi}_\rmb{p_1},\hat{\xi}_\rmb{p_2}\} \subset \Xi(\hat{\xi}_\rmb{p^\star})$.\\  
    Split $\Xi(\hat{\xi}_\rmb{p^\star})$ by creating child partitions  $\Xi(\hat{\xi}_\rmb{p_1})$ and $\Xi(\hat{\xi}_\rmb{p_2})$. \\ \rmb{Update} $\mathcal{P} \leftarrow \rmb{(\mathcal{P}\setminus\{\hat{\xi}_\rmb{p^\star}\})} \cup \{ \hat{\xi}_\rmb{p_1},\hat{\xi}_\rmb{p_2}\}$.};

    \path [line] (init) -- (LB_ARMIO);
    \path [line] (LB_ARMIO) -- (UB_ARMIO);
    \path [line] (UB_ARMIO) -- (decision);
    \path [line] (decision) -- node[above] {\rmb{Yes}} (terminate);
    \path [line] (decision) -- node[right] {\rmb{No}} (split);
    \path [line] (split.west) -- ++(-1,0) |- (LB_ARMIO.west);
    
    \end{tikzpicture}
    \caption{The flowchart of the partition-and-bound algorithm.}
    \label{F:new-problem-description}
\end{figure}

The process of solving an example problem over three iterations is illustrated in Figure \ref{F:partition-and-bound-example}, depicting the binary search tree (left) and the partitioning of the uncertainty set $\Xi$ (right). We only present the generation of upper-bounds for clarity. In the first iteration, the upper-bounding problem~\eqref{zpartmodel} is solved at the root node without partitioning the uncertainty set to obtain a first-stage solution and an initial upper bound. Two critical scenarios, $\{\hat{\xi}_1, \hat{\xi}_2\}$, are identified based on the first-stage solution. The uncertainty set $\Xi$ is partitioned by creating two child nodes for partitions  $\Xi(\hat{\xi}_1)$ and $\Xi(\hat{\xi}_2)$.
In the second iteration, the upper-bounding problem is solved over these two partitions, yielding a first-stage solution and upper bounds of $z_{\text{UB}}^1 = 720$ for Node 1 and $z_{\text{UB}}^2 = 980$ for Node 2. Since $z_{\text{UB}}^2$ is larger, Node 2 is selected as the active partition for further partitioning, and critical scenarios $\{\hat{\xi}_3, \hat{\xi}_4\}$ are identified. This partition is then split, creating Nodes 3 and 4. In the third iteration, the upper-bounding problem is solved over the three current partitions $\{\Xi(\hat{\xi}_1), \Xi(\hat{\xi}_3), \Xi(\hat{\xi}_4)\}$. Node 3 yields the highest upper bound, and is further partitioned into Nodes 5 and 6. The global upper bound UB after three iterations is determined by $z^5_{\text{UB}} = 870$ at Node 5, the highest upper bound among the leaf nodes. We now describe the details of each step. We present the upper- and lower-bounding subproblems for the route assignment model~\eqref{originalmodel}. The subproblems for the route choice model~\eqref{prefmodel} are presented in the ~\ref{app:subprob_choice_model}.

\begin{figure}[t!]
\centering
\includegraphics[scale=0.77]{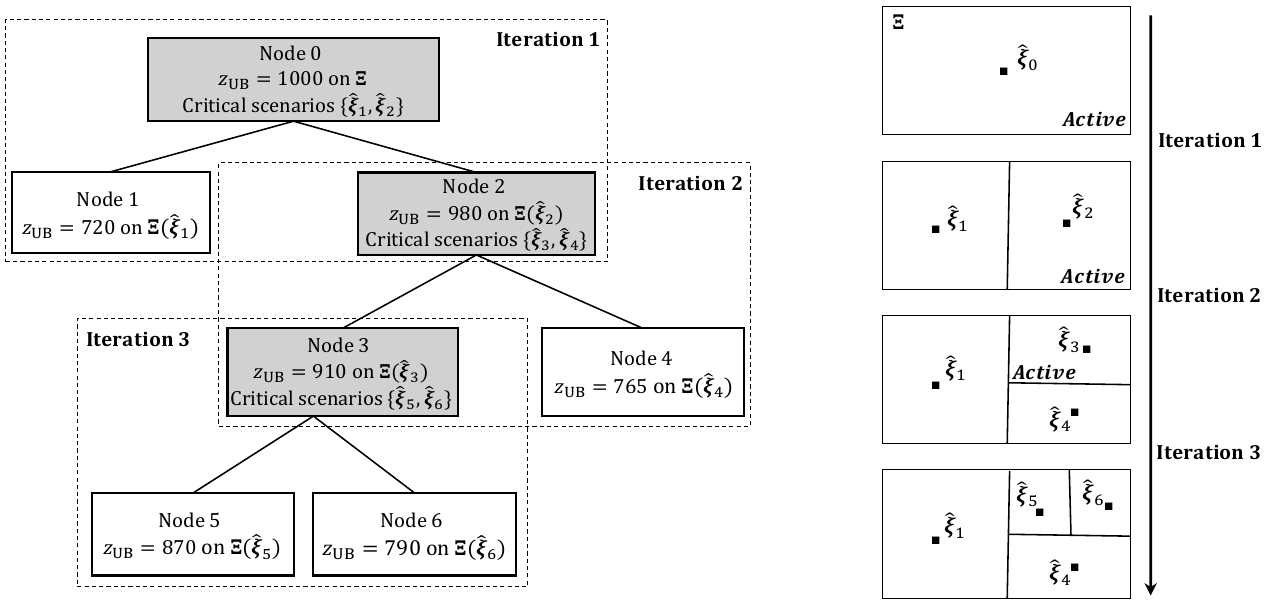}
\caption{An example partition-and-bound tree. The shaded nodes denote the active partition in each iteration.}
\label{F:partition-and-bound-example}
\end{figure}

\subsection{The Lower-Bounding Problem}\label{lowerboundsection}
Given a \rmb{finite} set of critical scenarios $\mathcal{P}=\{\hat{\xi}_p\}_{p \in P} \subset \Xi$, 
we obtain a lower bound by solving the following 
\rmb{two-stage stochastic mixed-integer program, denoted by \text{LB-ARMIO}}:
\begin{subequations}\label{zLBmodel}
\begin{align}
z_{\text{LB}}(\mathcal{P}) :=  \min \ &  z \\
\text{s.t.} \ & \eqref{eq:tot_open_shelter_model1}-\eqref{eq:route_restict_model1} \notag\\
& \sum_{j \in \mathcal{J}} u_{jp} + \lambda \sum_{a \in \mathcal{A}} \rmb{\sum_{\PWLIdx \in \PWLSet}t^0_a \PWLSlope_{\PWLIdx}x^{\PWLIdx}_{ap}} \leq z \label{LB-obj-function} & p \in P,\\
\label{eq:LB-ch4_rob_inv} &   \sum_{i \in \mathcal{O}} \sum_{r \in \mathcal{R}_{ij}} \hat{\xi}_{pi}y_{r} \leq  u_{jp} + m\sum_{\ell \in \rmb{\mathcal{L}_j}} s^{2}_{\ell jp}  & j \in \rmb{\mathcal{J}}, p \in P,\\
\label{eq:LB-arc_capa_model1_pwl} &   \sum_{i \in \mathcal{O}} \sum_{j \in \rmb{\mathcal{J}}} \sum_{r \in \mathcal{R}_{ij}:a \in r} \hat{\xi}_{pi}y_{r} \leq 
\rmb{\sum_{\PWLIdx \in \PWLSet}x^{\PWLIdx}_{ap}}
& a \in \mathcal{A}, p \in P,\\
& \rmb{x^{\PWLIdx}_{ap} \leq c_a\PWLDelta_{\PWLIdx}} & \rmb{\PWLIdx \in \PWLSet\setminus\{\PWLCount\}, a \in \mathcal{A}, p \in P,}\\
&\label{eq:LB-ch4_rob_s2}\sum_{\rmb{j \in \mathcal{J}} : \ell \in \rmb{\mathcal{L}_j}} s^{2}_{\ell jp} \leq K s^{1}_{\ell} &  \rmb{\ell \in \mathcal{L}}, p \in P,\\
\label{eq:LB-ch4_rob_non_neg1} & b_j \in \{0,1\}, s^1_{\rmb{\ell}} \in \mathbb{Z}_{+} , y_{r} \geq 0 & i \in \mathcal{O}, j \in \rmb{\mathcal{J}}, \rmb{\ell \in \mathcal{L}}, r \in \mathcal{R}_{ij},\\
\label{eq:LB-ch4_rob_non_neg2} & s^{2}_{\ell jp} \in \mathbb{Z}_{+}, u_{jp}, \rmb{x^{\PWLIdx}_{ap}} 
\geq 0 & \hspace{-2cm} j \in \rmb{\mathcal{J}}, \ell \in \rmb{\mathcal{L}}, a \in \mathcal{A}, p \in P, \rmb{\PWLIdx \in \PWLSet}.
\end{align}
\end{subequations}

LB-ARMIO contains recourse variables and constraints for each scenario $p\in P$. The epigraph variable $z$ and constraints~\eqref{LB-obj-function} represent the worst-case objective value over $p\in P$. Since $\mathcal{P}\subset\Xi$, this is a relaxation of the fully adaptive robust model; consequently, $z_{\text{LB}}(\mathcal{P}) \leq z_{\PWLCSO}$.

\subsection{The Upper-Bounding Problem}\label{upperboundsection}

\rmb{The critical scenario set $\mathcal{P}=\{\hat{\xi}_p\}_{p \in P}\subset\Xi$ indexes the leaves of the partition tree  denoted by $\{\Xi(\hat{\xi}_p)\}_{p\in P}$. The leaves are maintained as a cover of the uncertainty set such that $\bigcup_{p\in P}\Xi(\hat{\xi}_p)=\Xi$. At initialization, $\hat{\xi}_0$ indexes the root leaf $\Xi(\hat{\xi}_0)=\Xi$. When an active leaf $\Xi(\hat{\xi}_{p^\star})$ is split, it is replaced by two child leaves defined by the distance-based split in Section~\ref{selectactive}; these child partitions cover the parent partition. Hence, by induction, every $\xi\in\Xi$ belongs to at least one leaf partition at every iteration. Boundary scenarios may be assigned to leaves by any fixed rule.}

\rmb{We generate upper bounds by solving model~\eqref{zpartmodel}, denoted by UB-ARMIO, which is a partition-based finitely adaptable restriction of the \CSO{}. For each leaf partition $\Xi(\hat{\xi}_p)$, the formulation chooses \emph{static} relief recourse decisions $\{u_{jp},(s^2_{\ell jp})_{\ell \in \mathcal L _j}\}$ for each shelter $j \in \mathcal{J}$, and a \emph{static} PWL segment vector $(x^{\PWLIdx}_{ap})_{\PWLIdx \in \PWLSet}$ for each arc $a \in \mathcal{A}$. These decisions must be feasible for \emph{every} realization in $\Xi(\hat{\xi}_p)$. The epigraph variable $z$ represents the worst-case value over the current leaf partitions. Any feasible solution to UB-ARMIO yields a feasible policy for the original fully adaptive model. Consequently, $z_{\text{UB}}(\mathcal{P})\ge z_{\PWLCSO}$.}
\begin{subequations}\label{zpartmodel}
\begin{align}
\label{eq:ch4_rob_part_objfun} 
z_{\text{UB}}(\mathcal{P}) := \min \ & z \\
\text{s.t.} \ & \eqref{eq:tot_open_shelter_model1}-\eqref{eq:route_restict_model1} \notag\\
& \sum_{j \in \rmb{\mathcal{J}}} u_{jp} + \lambda \sum_{a \in \mathcal{A}} \rmb{\sum_{\PWLIdx \in \PWLSet}t^0_a \PWLSlope_{\PWLIdx}x^{\PWLIdx}_{ap}} 
\leq z \label{UB-obj-function} & p \in P,\\
\label{eq:unmet_model_UB} &   \sum_{i \in \mathcal{O}} \sum_{r \in \mathcal{R}_{ij}} \xi_{i}y_{r} \leq u_{jp} + m \sum_{\ell \in \rmb{\mathcal{L}_j}} s^{2}_{\ell jp}  & j \in \rmb{\mathcal{J}}, p \in P, \xi \in \Xi(\hat{\xi}_p),\\
\label{eq:arc_capa_model1_pwl_UB} &   \sum_{i \in \mathcal{O}} \sum_{j \in \rmb{\mathcal{J}}} \sum_{r \in \mathcal{R}_{ij}:a \in r} \xi_{i}y_{r} \leq \rmb{\sum_{\PWLIdx \in \PWLSet}x^{\PWLIdx}_{ap}} 
& a \in \mathcal{A}, p \in P, \xi \in \Xi(\hat{\xi}_p),\\
&\label{eq:pwl_segment_bound_UB} \rmb{x^{\PWLIdx}_{ap} \leq c_a\PWLDelta_{\PWLIdx}} & \rmb{\PWLIdx \in \PWLSet\setminus\{\PWLCount\}, a \in \mathcal{A}, p \in P,}\\
&\label{eq:flexible_trailer_capa_model1_UB}\sum_{\rmb{j \in \mathcal{J}} : \ell \in \rmb{\mathcal{L}_j}} s^{2}_{\ell jp} \leq K s^{1}_{\ell} &  \ell \in \rmb{\mathcal{L}}, p \in P,\\
\label{eq:non_neg_model1_UB} & b_j \in \{0,1\}, s^1_{\rmb{\ell}} \in \mathbb{Z}_{+}, y_{r} \geq 0 & i \in \mathcal{O}, j \in \rmb{\mathcal{J}}, \rmb{\ell \in \mathcal{L}}, r \in \mathcal{R}_{ij},\\
&\label{eq:non_neg_model1_UB_2}  s^{2}_{\ell jp} \in \mathbb{Z}_{+}, u_{jp}, \rmb{x^{\PWLIdx}_{ap}}
\geq 0 &\hspace{-0.5cm} j \in \rmb{\mathcal{J}}, \ell \in \rmb{\mathcal{L}_j}, a \in \mathcal{A}, p \in P, \rmb{\PWLIdx \in \PWLSet}.
\end{align}
\end{subequations}

For continuous or otherwise implicitly described uncertainty sets, constraints~\eqref{eq:unmet_model_UB}--\eqref{eq:arc_capa_model1_pwl_UB} are semi-infinite. Since the uncertain parameters enter linearly, such constraints are equivalent to bounding the support function of the current leaf partition in the corresponding coefficient direction. Thus, whenever these bounding problems admit exact tractable reformulations, as is the case for many common polyhedral or conic-representable uncertainty sets, the semi-infinite constraints can be replaced by finite LP, SOCP, or more general conic counterparts \rmb{see, e.g., \citet[Table~1]{gorissen2015practical}}. In our case study, the continuous budgeted uncertainty set and the distance-based splits yield polyhedral leaf partitions; ~\ref{app:reformulation} shows the LP-duality reformulation for 
\eqref{eq:unmet_model_UB} and~\eqref{eq:arc_capa_model1_pwl_UB}.

\subsection{Determining the Active Partition and the Critical Scenarios}\label{selectactive}
\rmb{In the optimum solution of UB-ARMIO, let $(\bar{b}_j,\bar{y}_r)$ be the first-stage shelter-opening and route-assignment decisions, and $(\bar{x}^{\PWLIdx}_{ap},\bar{s}^{2}_{\ell jp},\bar{u}_{jp})$ denote the recourse solution for each leaf partition $p\in P$. The value associated with partition $p$ is given by:}
\[
\rmb{z_{\text{UB}}^p(\mathcal{P})=\sum_{j\in\mathcal{J}}\bar{u}_{jp}+\lambda\sum_{a\in\mathcal{A}}\sum_{\PWLIdx\in\PWLSet}t_a^0\PWLSlope_{\PWLIdx}\bar{x}^{\PWLIdx}_{ap}.}
\]
\rmb{The epigraph constraint~\eqref{UB-obj-function} ensures that the current $z_{\text{UB}}(\mathcal{P}) = \max_{p \in P}z_{\text{UB}}^{p}(\mathcal{P})$. We define an \textit{active partition} as any leaf partition $\Xi(\hat{\xi}_{p^\star})$, $p^\star\in P$ that satisfies $z_{\text{UB}}^{p^\star}(\mathcal{P})=z_{\text{UB}}(\mathcal{P})$. Reducing the value of inactive partitions alone cannot improve the current upper bound. Therefore, in each iteration of the partition-and-bound algorithm,  we split an active partition $\Xi(\hat{\xi}_{p^\star})$ by identifying \textit{critical scenarios} 
 that make the recourse costly in that partition.}
If recourse decisions in the child partitions are allowed to adapt to these critical scenarios, the worst-case objective values of the child partitions may decrease.

When recourse variables are continuous, critical scenarios that determine the objective value of a partition can be identified by active second-stage constraints~\citep{postek2016multistage}. However, if binary or integer recourse variables exist, such as \(s^{2}_{\ell jp}\) in our problem, the optimal recourse solution may not satisfy any second-stage constraints at equality for any scenario. \cite{dunning2016paper} proposed splitting the uncertainty set using the critical scenarios that minimize the slack of second-stage constraints in each leaf partition $p \in P$. Their algorithm splits each leaf partition into multiple child partitions in every iteration; however, creating too many child partitions may reduce computational efficiency. 
\cite{romeijnders2021piecewise} introduced a branch-and-bound-based critical-scenario detection method for integer adjustable robust optimization, together with an optimality criterion indicating when further partitioning cannot improve the piecewise constant decision rule. Their framework assumes a bounded polyhedral uncertainty set and affine dependence on the uncertain parameters.

Our method splits an active partition in each iteration. We identify critical scenarios in $\Xi(\hat{\xi}_\mb{p^\star})$ that minimize the slack of the second-stage constraints. 
Although both~\eqref{eq:unmet_model_UB} and \eqref{eq:arc_capa_model1_pwl_UB} can be used for this task, after preliminary computational experiments, we chose to focus on the unmet demand constraint~\eqref{eq:unmet_model_UB}. 
\rmb{Let $\mathcal{J}^+=\{j\in\mathcal{J}:\bar{b}_j=1\}$ be the set of shelters opened by the current UB-ARMIO solution. For each $j\in\mathcal{J}^+$, we select an uncertainty realization by solving:}
\begin{align} \label{findcriticalscenario}
\rmb{\tilde{\xi}_{p^\star}^{j} \in} \argmin_{\xi \in \Xi(\hat{\xi}_\rmb{p^\star})} \Big\{ \bar{u}_{j\rmb{p^\star}} + m\sum_{\ell \in \rmb{\mathcal{L}_j}} \bar{s}^{2}_{\ell j\rmb{p^\star}}-\sum_{i \in \mathcal{O}} \sum_{r \in \mathcal{R}_{ij}} \xi_{i}\bar{y}_{r}\Big\}.
\end{align} 
\rmb{The objective in~\eqref{findcriticalscenario} is the slack of the unmet-demand constraint for shelter $j$ in the active partition, evaluated at the current UB-ARMIO solution. 
Any element in the argmin set can be selected. The set of selected scenarios is denoted by $\mathcal{P}^{\mathrm{crit}}_{p^\star}:=\{\tilde{\xi}_{p^\star}^{j}:j\in\mathcal{J}^+\}$.
}
\rmb{We choose two critical scenarios from $\mathcal{P}^{\mathrm{crit}}_{p^\star}$ to split the active partition. In particular, we order the pairs $(j,\tilde{\xi}_{p^\star}^{j})$, $j\in\mathcal{J}^+$, in nonincreasing order of $\bar{u}_{j p^\star}$ and keep the first two distinct scenarios, denoted by $\hat{\xi}_{p_1}$ and $\hat{\xi}_{p_2}$. If only one scenario is selected from $\mathcal{P}^{\mathrm{crit}}_{p^\star}$, any other scenario in the active partition can be used as the second critical scenario. 
Given $\hat{\xi}_{p_1}$ and $\hat{\xi}_{p_2}$, the active partition is replaced by the two child partitions:}
\[
\begin{aligned}
\Xi(\hat{\xi}_{p_1}) &= \{\xi\in\Xi(\hat{\xi}_{p^\star}):\|\xi-\hat{\xi}_{p_1}\|_2\leq\|\xi-\hat{\xi}_{p_2}\|_2\},\\
\Xi(\hat{\xi}_{p_2}) &= \{\xi\in\Xi(\hat{\xi}_{p^\star}):\|\xi-\hat{\xi}_{p_2}\|_2\leq\|\xi-\hat{\xi}_{p_1}\|_2\}.
\end{aligned}
\]
\rmb{The boundary between the two children is the set of points equidistant from $\hat{\xi}_{p_1}$ and $\hat{\xi}_{p_2}$, i.e., the perpendicular bisector hyperplane. The first child definition is equivalent to
\(
2(\hat{\xi}_{p_2}-\hat{\xi}_{p_1})^\top \xi \leq \|\hat{\xi}_{p_2}\|_2^2-\|\hat{\xi}_{p_1}\|_2^2,
\)
and the second child is obtained by reversing this inequality. Thus,  each child leaf is created by adding one linear inequality to the description of the parent scenario partition. 
}

\section{Case Study: Sioux Falls Network}\label{S4:case_study}

We report computational results using a case study based on the Sioux Falls Network~\citep{leblanc1975} presented in Figure~\ref{F:sioux-falls-network}. 
The network consists of 24 nodes and 76 directed arcs. Eight nodes are candidate shelter locations: $\{1, 4, 8, 13, 14, 18, 20, 22\}$~\citep{kulshrestha2011}. Evacuees are initially located at the remaining 16 nodes. For simplicity, we assume that the baseline traffic in the network is zero. We consider routes with travel distance up to 20\% longer than the shortest route between an origin and a shelter location. For each arc, the distance and capacity parameters are generated from uniform distributions $U(1,5)$ and $U(4000, 8000)$, respectively. The maximum allowable distance for relief supply distribution from a prepositioned trailer to a shelter is set to 5 km (the distances between candidate shelter locations in the network range from 1 to 18 km), and the number of relief supply units per pallet $m=5$.  Assuming a free-flow speed of 60 km/h across the network, the free-flow travel time $t^0_a$ for each road segment is calculated by dividing its distance by 60. 
\begin{figure}[b!]
\centering
\includegraphics[scale=0.75]{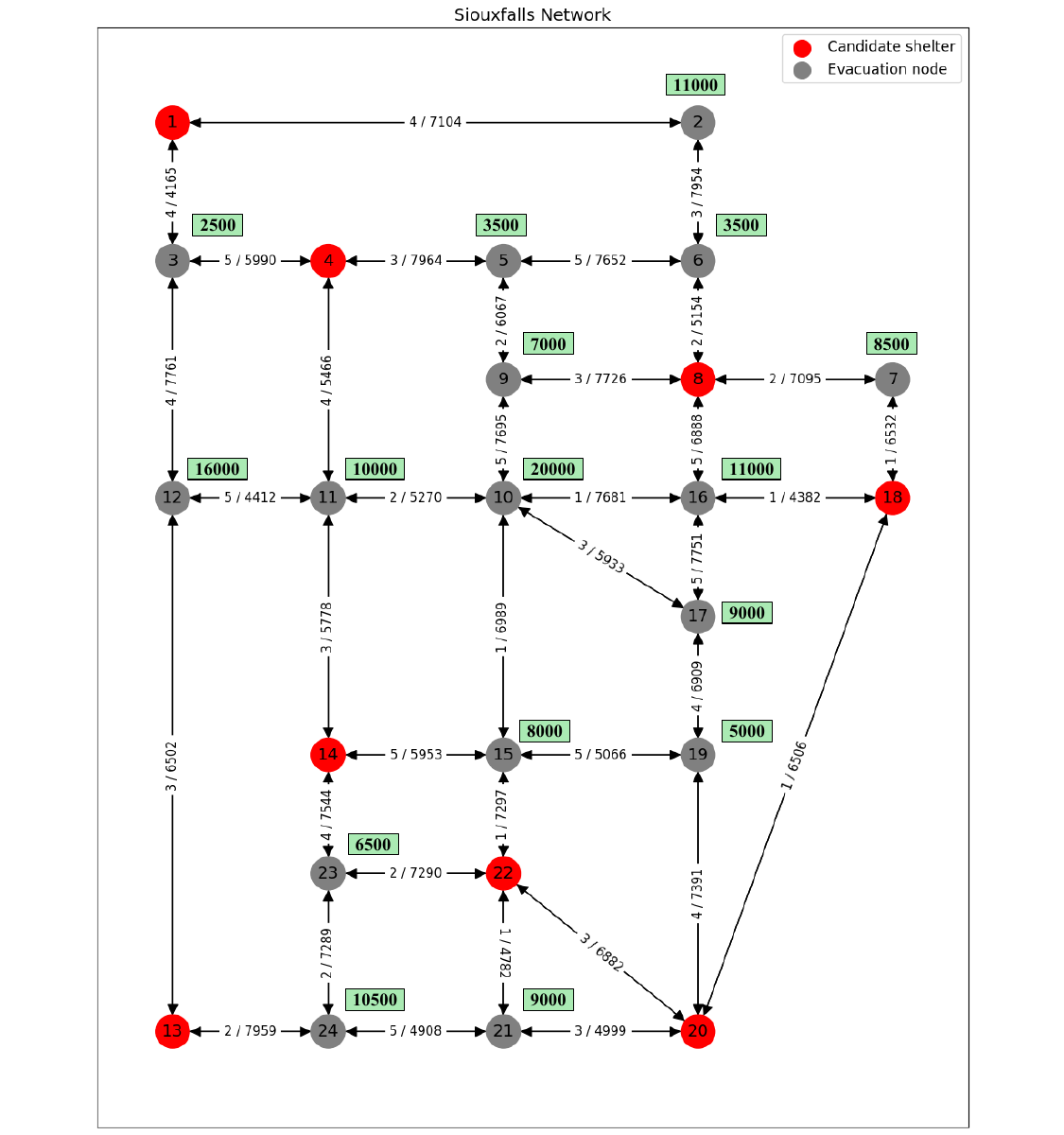}
\caption{The Sioux Falls network consists of 24 nodes and 76 directed arcs. Nominal shelter demand $\bar{w}_i$ (green squares) originates from 16 evacuation nodes (gray). Eight nodes (red) are candidate shelter locations. The label on each arc shows the distance/capacity between nodes.}
\label{F:sioux-falls-network}
\end{figure}
We present extensive computational experiments to analyze: 1) the trade-off between unmet relief item demand and evacuation time, 2) the effectiveness of centralized evacuation route assignment and adaptive allocation of relief supplies, and 3) the computational performance of the proposed tree-based partition-and-bound approach. We implement the solution algorithms in Python 3.12.7. In each iteration, we solve UB-ARMIO using the RSOME (Robust Stochastic Optimization Made Easy) package~\citep{chen2021rsome} with Gurobi 12, and LB-ARMIO using Gurobi 12. The algorithm terminates when the optimality gap, i.e., $\frac{\text{UB}-\text{LB}}{\text{UB}} \times 100$, where UB and LB denote the upper and lower bounds, is less than 1\% or 
\rmb{when the 3-hour time limit is reached. Because the time limit is checked between iterations, the reported solution times may exceed three hours when the final iteration starts before the limit.} 
Experiments are performed on a computer with an Intel i5-14500 CPU and 16 GB of memory. 


\subsection{Uncertainty Set}\label{uncertaintysetdefn}

We use a polyhedral uncertainty set in the case study. 
\rmb{This choice is compatible with the robust-counterpart reformulations of constraints~\eqref{eq:unmet_model_UB}--\eqref{eq:arc_capa_model1_pwl_UB}.} 
It also allows us to compare our solution approach with that of~\cite{romeijnders2021piecewise}, which requires a polyhedral uncertainty set. We assume that the realization of 
\rmb{evacuee count} $\xi_i$ at origin $i \in \mathcal{O}$ takes values in the interval $[\bar{w}_{i}(1-\epsilon), \bar{w}_{i}(1+\epsilon)]$, where  $\bar{w}_i$ is the nominal value (provided in Figure \ref{F:sioux-falls-network}), and $\epsilon > 0$ controls the range.  
To avoid overly conservative solutions, the total deviation from the nominal 
values is limited by a budget of uncertainty $\Gamma \geq 0$.
\begin{equation}
     \Xi = \big\{ \xi \; | \; \bar{w}_{i}(1-\epsilon) \leq \xi_i \leq \bar{w}_{i}(1+\epsilon),\ i \in \mathcal{O}, \; \sum_{i \in \mathcal{O}}\left|\frac{\xi_i - \bar{w}_{i}}{\bar{w}_{i}\epsilon}\right|  \leq \Gamma \big\}.
\end{equation}
The uncertainty set $\Xi$ expands as $\epsilon$ or $\Gamma$ increases, and the robust solutions become more conservative. We set $\epsilon = 0.3$ in all experiments, and the total relief supplies $S \times K \times m$ to cover $\coverage \in \{95\%, 99\%\}$ of the maximum total 
\rmb{evacuee count} realization over $\Xi$.

\subsection{Managerial Insights }\label{sec:managerial_insights}
\rmb{We use the case study to investigate two fundamental questions in evacuation planning. First, how does the trade-off between evacuation time and relief demand coverage manifest itself as planners vary the relative emphasis placed on these two objectives? Second, what are the individual and combined benefits of centralized route assignment and post-disaster relief redistribution in terms of improving relief demand coverage and reducing evacuation time? We address both questions under an uncertainty budget of $\Gamma = 0.5$ for all experiments in this section.}

\subsubsection{Relief Demand Coverage and Evacuation Time Trade-off.}\label{sec:trade_off}
\rmb{We vary the objective-weight parameter $\lambda \in \{0, 0.01, 0.1, 0.3, 0.5, 1\}$ to show how the planner's priority on evacuation time affects the robust relief demand coverage.} Figure~\ref{F:pareto-plot} 
\rmb{shows the trade-off curves between} unmet demand and evacuation time in the CSO route assignment (\CSO) and user route choice (\Pref) models \rmb{when} $N=4$, $S=2$, and $\coverage=95\%$. Table~\ref{tab:lambda_results} \rmb{reports the corresponding unmet demand, evacuation-time, bound, and runtime values.} 

\begin{figure}[h!]
\centering
\includegraphics[scale=0.55]{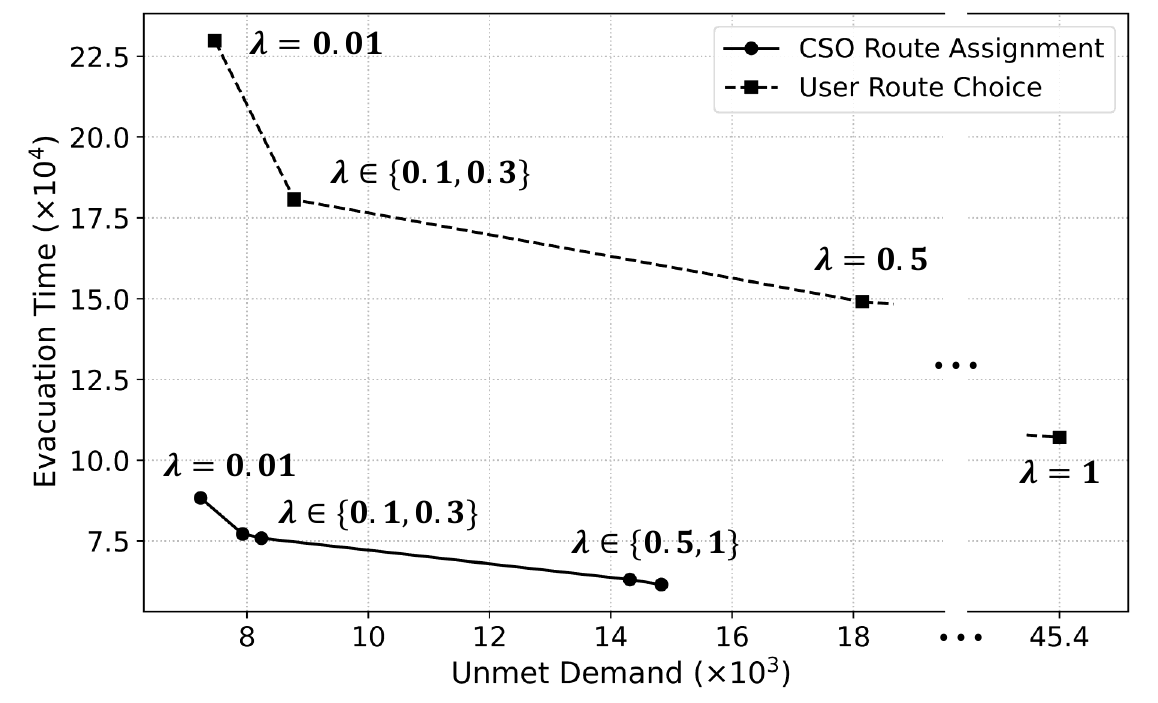}
\caption{Unmet relief item demand and evacuation time trade-off.}
\label{F:pareto-plot}
\end{figure}

\rmb{The main insight is that even a small positive weight on evacuation time (e.g., $\lambda=0.01$) can sharply reduce worst-case evacuation time, without a major impact on unmet demand; however, further increases in $\lambda$ eventually require a larger sacrifice in demand coverage.}
For all tested values of $\lambda$ in Figure~\ref{F:pareto-plot}, \CSO \  strictly dominates \Pref, achieving lower values for both unmet demand and evacuation time.
Furthermore, the preference-based model \Pref \  is more sensitive to $\lambda$, indicating that balancing evacuation time against relief-demand coverage is harder when evacuees are allowed to choose routes. Thus, the trade-off analysis is not only a sensitivity test on $\lambda$; it shows how centralized route assignment expands the set of attractive operating points available to the planner.


\begin{table}[htbp]
\centering
\caption{Computational results for $N=4$, $S=2$, and $\coverage=95\%$.}
\label{tab:lambda_results}
\renewcommand{\arraystretch}{0.995}
\resizebox{0.75\textwidth}{!}{
\begin{tabular}{cc| cccccc}
\specialrule{1.5pt}{0pt}{0pt}
 & &$\lambda = 0.0$ & $\lambda = 0.01$  & $\lambda = 0.1$ & $\lambda = 0.3$ & $\lambda = 0.5$ & $\lambda = 1$ \\
\specialrule{1pt}{0pt}{0pt}

\multirow{6}{*}{\CSO}
& Unmet demand & 7,200 & 7,234 & 7,928 & 8,268 & 14,316 & 14,834 \\
& Evac. time   & 1,628,598 & 88,350 & 77,184 & 75,868 & 63,120 & 61,597 \\
& UB          & 7,200 & 8,054 & 15,223 & 30,490  & 45,027 & 75,447  \\
& LB          & 7,200 & 7,888 & 14,022 & 27,868  & 40,459 & 67,811  \\
& Gap (\%)          & 0.0 & 2.1 & 7.9 & 8.6 & 10.1 & 10.1 \\
& Sol. time (sec) & 26 & 11,421 & 11,719 & 11,611  & 11,627 & 11,005  \\

\specialrule{1pt}{0pt}{0pt}

\multirow{6}{*}{\Pref}
& Unmet demand & 7,270 & 7,469 & 8,778  & 8,778 & 18,146 & 45,426 \\
& Evac. time   & 1,736,583 & 229,835 & 180,873  & 180,642 & 149,110 & 107,088 \\
& UB           & 7,270 & 9,682 & 26,497 & 62,564 & 92,268 & 152,344 \\
& LB          & 7,200 & 9,357 & 25,990 & 60,413  & 89,860 & 146,112 \\
& Gap (\%)         & 0.9 & 3.4 & 1.9 & 3.4 & 2.6 & 4.1 \\
& Sol. time (sec) & 1,481 & 12,608 & 11,748 & 11,068 & 11,056 & 11,708 \\

\specialrule{1.5pt}{0pt}{0pt}
\end{tabular}
}
\end{table}

\subsubsection{Effectiveness of Evacuation Route Assignment and Post-disaster Redistribution of Relief Supplies.}\label{sec:operational_effectiveness}
\phantom{} \rmb{The second set of experiments isolates the value of the two operational strategies in \CSO: centralized evacuation route assignment and post-disaster redistribution of prepositioned relief supplies.}  
Table~\ref{tab:cso-vs-others} compares 
\CSO\ \rmb{with three benchmarks:} 
(i) \Pref, in which evacuees choose routes and prepositioned relief supplies are redistributed in the second stage; (ii) \texttt{CSO-Fixed}, denoted by \CSOFix, in which evacuees use the assigned routes, but relief supplies are prepositioned at shelter locations $j \in \mathcal{J}$ and not redistributed post-disaster; and (iii) \texttt{URC-Fixed}, denoted by \PrefFix, in which evacuees choose routes, and prepositioned relief supplies at shelter locations are not redistributed post-disaster. The \Pref\ \rmb{model} is presented in Section~\ref{with_route_choice}. The \CSOFix\ and \PrefFix\ \rmb{models} are identical to \CSO\ and \Pref, respectively, except that constraints~\eqref{eq:unmet_model1} and \eqref{eq:unmet_model2} are replaced by
$\sum_{i \in \mathcal{O}} \sum_{r \in \mathcal{R}_{ij}} \xi_{i}y_{r} \leq u_j(\xi) + K m s^1_{j}$ for all $j \in \mb{\mathcal{J}}, \ \xi \in \Xi$
(with $\pi_r$ substituted for $y_r$ in \PrefFix), and constraints~\eqref{eq:flexible_trailer_capa_model1} are replaced by 
$s^1_{j} \leq S b_{j}$ for all $j \in \mb{\mathcal{J}}$.

We solve instances with the number of shelters $N \in \{2,4,6\}$, the number of trailers $S \in \{2,5,10\}$, and demand coverage level $\coverage \in \{95\%,99\%\}$. The parameter $\lambda$ is set to 0.1, corresponding to the elbow point of the \rmb{computed trade-off curves} for both models in Figure~\ref{F:pareto-plot}. Table~\ref{tab:cso-vs-others} reports the percentage improvement achieved by \CSO\ relative to each benchmark in terms of the worst-case objective upper bound (UB), unmet demand, and evacuation time. Several key \rmb{insights} 
emerge.

\begin{table}[h!]
\centering
\caption{Percent improvement in worst-case objective upper bound (UB), unmet demand, and evacuation time with centralized route assignment and post-disaster relief supply redistribution. ($\lambda=0.1$)}
\label{tab:cso-vs-others}
\renewcommand{\arraystretch}{0.98}
\setlength{\tabcolsep}{5pt}
\resizebox{1.0\textwidth}{!}{
\begin{tabular}{%
  c|
  c|
  c
  | c c c
  | c c c
  | c c c
}
\specialrule{1.5pt}{0pt}{0pt}
 \multicolumn{3}{c}{}&
 \multicolumn{3}{c}{${N=2}$} &
 \multicolumn{3}{c}{${N=4}$} &
 \multicolumn{3}{c}{${N=6}$} \\
\cmidrule(lr){4-6}\cmidrule(lr){7-9}\cmidrule(lr){10-12}
\multicolumn{1}{c}{$S$}  &
\multicolumn{1}{c}{$\coverage$}  &
\multicolumn{1}{c}{ }
& \CSO\ vs \CSOFix
& \CSO\ vs \Pref
& \CSO\ vs \PrefFix
& \CSO\ vs \CSOFix
& \CSO\ vs \Pref
& \CSO\ vs \PrefFix
& \CSO\ vs \CSOFix
& \CSO\ vs \Pref
& \CSO\ vs \PrefFix \\
\specialrule{1pt}{0pt}{0pt}

\multirow[c]{6}{*}{${2}$}
& \multirow[c]{3}{*}{$95\%$}
& UB
  & 7.2 & 39.1 & 43.1
  & 13.0 & 42.5 & 61.0
  & 18.9 & 54.5 & 73.2\\
&
& Unmet dem.
  & 6.9 & 0.5 & -1.4
  & 4.0 & 9.7 & 62.8
  & 4.5 & 58.5 & 80.8\\
&
& Evac. time
  & 10.2 & 51.5 & 55.2
  & 22.2 & 57.3 & 57.1
  & 31.5 & 47.7 & 52.6\\
\cmidrule(lr){2-12}

& \multirow[c]{3}{*}{${99\%}$}
& UB
  & 14.7 & 47.7 & 55.0
  & 24.8 & 53.6 & 71.0
  & 32.1 & 66.8 & 80.5\\
&
& Unmet dem.
  & 10.7 & -10.7 & 39.8
  & 25.0 & 1.1 & 91.2
  & 30.8 & 90.6 & 95.6\\
&
& Evac. time
  & 16.5 & 51.2 & 56.4
  & 27.4 & 57.8 & 46.1
  & 35.0 & 35.7 & 41.8\\
\specialrule{1pt}{0pt}{0pt}

\multirow[c]{6}{*}{${5}$}
& \multirow[c]{3}{*}{${95\%}$}
& UB
  & -0.5 & 38.6 & 49.8
  & 0.5 & 46.5 & 58.6
  & 1.0 & 65.7 & 68.4\\
&
& Unmet dem.
  & 1.4 & -1.2 & 5.4
  & -1.7 & 9.4 & 44.6
  & 2.0 & 60.5 & 68.9\\
&
& Evac. time
  & -0.9 & 50.4 & 61.3
  & 4.9 & 70.8 & 72.4
  & 1.5 & 74.2 & 66.9 \\
\cmidrule(lr){2-12}

& \multirow[c]{3}{*}{${99\%}$}
& UB
  & 2.6 & 47.2 & 62.9
  & 5.5 & 73.5 & 77.8
  & 4.7 & 82.7 & 83.7\\
&
& Unmet dem.
  & -3.7 & -11.2 & 69.0
  & 1.3 & 78.9 & 87.7
  & 11.6 & 89.8 & 92.0\\
&
& Evac. time
  & 2.8 & 50.9 & 61.8
  & 6.5 & 69.9 & 64.8
  & 2.4 & 72.6 & 63.7\\
\specialrule{1pt}{0pt}{0pt}

\multirow[c]{6}{*}{${10}$}
& \multirow[c]{3}{*}{${95\%}$}
& UB
  & 0.6 & 39.4 & 39.6
  & 0.9 & 43.3 & 43.4
  & -0.5 & 47.4 & 52.7\\
&
& Unmet dem.
  & 0.7 & -0.7 & -0.7
  & -1.1 & 17.9 & 18.0
  & -1.1 & 22.3 & 36.8\\
&
& Evac. time
  & 1.0 & 51.7 & 51.7
  & 2.2 & 63.9 & 64.1
  & 2.2 & 73.0 & 72.5\\
\cmidrule(lr){2-12}

& \multirow[c]{3}{*}{${99\%}$}
& UB
  & 4.8 & 48.0 & 48.0
  & 6.2 & 66.5 & 69.0
  & 3.4 & 71.8 & 74.1\\
&
& Unmet dem.
  & 13.2 & -2.3 & -2.5
  & 5.2 & 70.8 & 75.8
  & 0.6 & 43.8 & 76.1\\
&
& Evac. time
  & 3.0 & 51.0 & 51.0
  & 9.0 & 64.7 & 64.4
  & 6.8 & 79.3 & 71.8\\
\specialrule{1.5pt}{0pt}{0pt}

\end{tabular}
}
\end{table}

First, centralized route assignment 
\rmb{is the dominant and most stable source of improvement in} test instances. Relative to \Pref, \CSO\ reduces the worst-case objective value by 39--83\%. These \rmb{gains are driven primarily by} 
large reductions in evacuation time, often over 50\%, demonstrating the effectiveness of coordinated routing in mitigating congestion during evacuations.
Second, the value of post-disaster redistribution of relief supplies depends on the flexibility of the prepositioning strategy. Comparing \CSO\ with \CSOFix\, shows that
when only two trailers are available ($S=2$), relief supply redistribution reduces the objective value by up to 32\% and unmet demand by up to 31\% for $N=6$. 
\rmb{As the number of trailers increases, these marginal gains become smaller;} for $S=10$, the \rmb{objective-value} improvements are generally below 7\%.  
\rmb{This pattern suggests that redistribution is most important when the initial prepositioning plan cannot by itself cover the spatial imbalance in relief demand.} 

\rmb{Third,} the largest 
\rmb{gains arise} when centralized routing and redistribution of relief supplies are implemented simultaneously. Relative to \PrefFix, \CSO\ reduces the objective value by up to 84\%, unmet demand by up to 96\%, and evacuation time by up to 73\%. 
\rmb{The gains also} generally increase with the number of shelters. For example, when $S=2$ and $\coverage=99\%$, the reduction in the worst-case objective relative to \PrefFix\ increases from 55\% for $N=2$ to 81\% for $N=6$. A larger shelter network provides greater flexibility in both routing and resource allocation, enabling \CSO\ to exploit coordination opportunities more effectively.
~\ref{app:detailed_computational_results} \rmb{further} decomposes the observed improvements into those attributable to centralized route assignment and \rmb{those attributable to} redistribution of relief supplies. Specifically, Table~\ref{tab:val_of_assign_adaptive_allocation} isolates the benefits of centralized route assignment by comparing \CSOFix\ vs \PrefFix, and the benefits of relief supply redistribution by comparing \Pref\ vs \PrefFix. 
The decomposition reinforces the main finding: centralized route assignment accounts for most of the improvements in objective value and evacuation time across the majority of settings, whereas the marginal benefits of post-disaster relief redistribution are concentrated in scenarios with tighter relief supply or less spatially flexible prepositioning. 

Overall, the results suggest that centralized route assignment and adaptive post-disaster relief redistribution play distinct yet complementary roles. Centralized route assignment primarily serves as a congestion management mechanism by redirecting evacuee flows and balancing shelter utilization. In contrast, post-disaster relief redistribution primarily improves relief demand coverage, with its greatest marginal benefit arising when relief resources are scarce or evacuees are unevenly distributed across shelters. The value of the joint \CSO\ model lies in its ability to coordinate these two mechanisms, rather than optimizing each in isolation. 

\subsection{Performance of the Solution Approach}
\label{subsec:AlgorithmicPerformance}

\phantom{} \rmb{We next evaluate the computational performance of our proposed active-partition refinement strategy. Among uncertainty set partitioning-based alternatives, our preliminary experiments with the method of \cite{dunning2016paper} produced a large number of child partitions and was computationally inefficient. We therefore use the partitioning method of \cite{romeijnders2021piecewise}, denoted by \texttt{RP}, as the main computational benchmark. In what follows, our proposed partition-and-bound method is denoted by \texttt{PB}.}

Both \texttt{PB} and \texttt{RP} split the active partition that determines the worst-case objective function value of the finitely adaptable master problem into two child nodes in each iteration.  In \texttt{PB}, we determine critical scenarios based on constraint slacks. In \texttt{RP}, critical scenarios are obtained from the optimal dual solutions of the node subproblems encountered in the branch-and-bound algorithm when optimizing the mixed-integer linear programming reformulation of the finitely adaptable master problem. 
This approach is applicable to bounded polyhedral uncertainty sets, whereas our method can accommodate mixed-integer uncertainty sets. 
We implement \texttt{RP} to solve \CSO \ (see ~\ref{app:dual_based_scenario_identification}).

Table~\ref{tab:romeijnders_approach_comparision} and Figure~\ref{F:Proposed_vs_RP} present the computational results for $N = 4$, $S = 2$, $\coverage = 95\%$, with uncertainty budget $\Gamma \in \{0.1, 0.2, 0.5, 1.0\}$ and $\lambda \in \{0.1, 0.3\}$. The table \rmb{reports solution quality, bounds, final gaps, partition counts, and solution times,} 
while the figure shows the 
\rmb{evolution of} the optimality gap over time. Both algorithms \rmb{use the same stopping rule, terminating when the optimality gap is less than 1\% or when the three-hour time limit is reached between iterations.}   
In terms of solution quality, \rmb{the proposed} \texttt{PB} \rmb{method} consistently achieves a lower UB across all cases, indicating that it reliably produces better feasible solutions.
Furthermore, Figure~\ref{F:Proposed_vs_RP} shows our method maintains a lower optimality gap than \texttt{RP} throughout the solution process in all instances.

\begin{table}[h!]
\centering
\caption{Performance of the proposed \texttt{PB} method and \texttt{RP} for the  \CSO\ model.}
\label{tab:romeijnders_approach_comparision}
\renewcommand{\arraystretch}{0.995}
\resizebox{0.8\textwidth}{!}{ 
\begin{tabular}{cc| cc| cc| cc| cc} 
\specialrule{2pt}{0pt}{0pt}
\multirow{2}{*}{} & \multirow{2}{*}{} & \multicolumn{2}{c|}{\textbf{$\Gamma = 0.1$}} & \multicolumn{2}{c|}{\textbf{$\Gamma = 0.2$}} & \multicolumn{2}{c|}{\textbf{$\Gamma = 0.5$}} & \multicolumn{2}{c}{\textbf{$\Gamma = 1.0$}}\\
\cmidrule(lr){3-4} \cmidrule(lr){5-6} \cmidrule(lr){7-8} \cmidrule(lr){9-10} 
& & \texttt{RP} & \texttt{PB} & \texttt{RP} & \texttt{PB} & \texttt{RP} & \texttt{PB} & \texttt{RP} & \texttt{PB}\\
\specialrule{2pt}{0pt}{0pt}

\multirow{7}{*}{$\boldsymbol{\lambda = 0.1}$}
    & \textbf{Unmet demand}         & 4,954  & 5,008 & 5,585 & 5,563 & 7,669 & 7,928 & 10,933 & 11,106  \\
     & \textbf{Evac. Time}           & 71,783 & 70,281 & 74,427 & 72,960 & 79,272 & 77,184 & 85,707 & 85,976\\
     & \textbf{UB}                   & 12,132 & 11,959 & 13,028 & 12,778 & 15,596 & 15,223 & 19,503 & 19,376 \\
     & \textbf{LB}                   & 11,657 & 11,655 & 12,274 & 12,194 & 15,036 & 14,022 & 17,120 & 17,132\\
     & \textbf{Gap (\%)}             & 3.9 & 2.5 &  5.8  & 4.6 & 9.7 & 7.9 & 12.2 & 11.6\\
     & \textbf{Partitions}      & 18 & 19 & 22 & 19 & 19 & 18 & 19 & 20 \\
     & \textbf{Sol. time (sec)}     & 13,749 & 11,557 & 15,895 & 11,193 & 15,036 & 11,719 & 13,199 & 12,216  \\
\specialrule{2pt}{0pt}{0pt}

\multirow{7}{*}{$\boldsymbol{\lambda = 0.3}$}
    & \textbf{Unmet demand}         & 5,150 & 5,098 & 6,103 & 5,916 & 8,286 & 8,268 & 11,916 & 11,477  \\
     & \textbf{Evac. Time}           & 69,947 & 69,567 & 70,518 & 70,654 & 78,102 & 75,868 & 86,362 & 82,589\\
     & \textbf{UB}                   & 26,134 & 25,879 & 27,258 & 26,885 & 31,717 & 30,490 & 37,825 & 35,944 \\
     & \textbf{LB}                   & 25,322 & 25,302 &  25,926   & 25,834 &  27,971   & 27,868 &  31,063  & 30,611\\
     & \textbf{Gap (\%)}                  & 3.1  & 2.2 &   4.9   & 3.9 &  11.8  & 8.6 &    17.9   & 14.8\\
     & \textbf{Partitions}       & 19 & 19 & 20 & 20 & 18  & 18 & 20 & 20  \\
     & \textbf{Sol. time (sec)}             & 11,525 & 11,186 & 13,165 & 11,214 & 12,308 & 11,611 & 15,079 & 11,431  \\
\specialrule{2pt}{0pt}{0pt}

\end{tabular}
}
\end{table}

\begin{figure}[h!]
\centering
\includegraphics[scale=0.33]{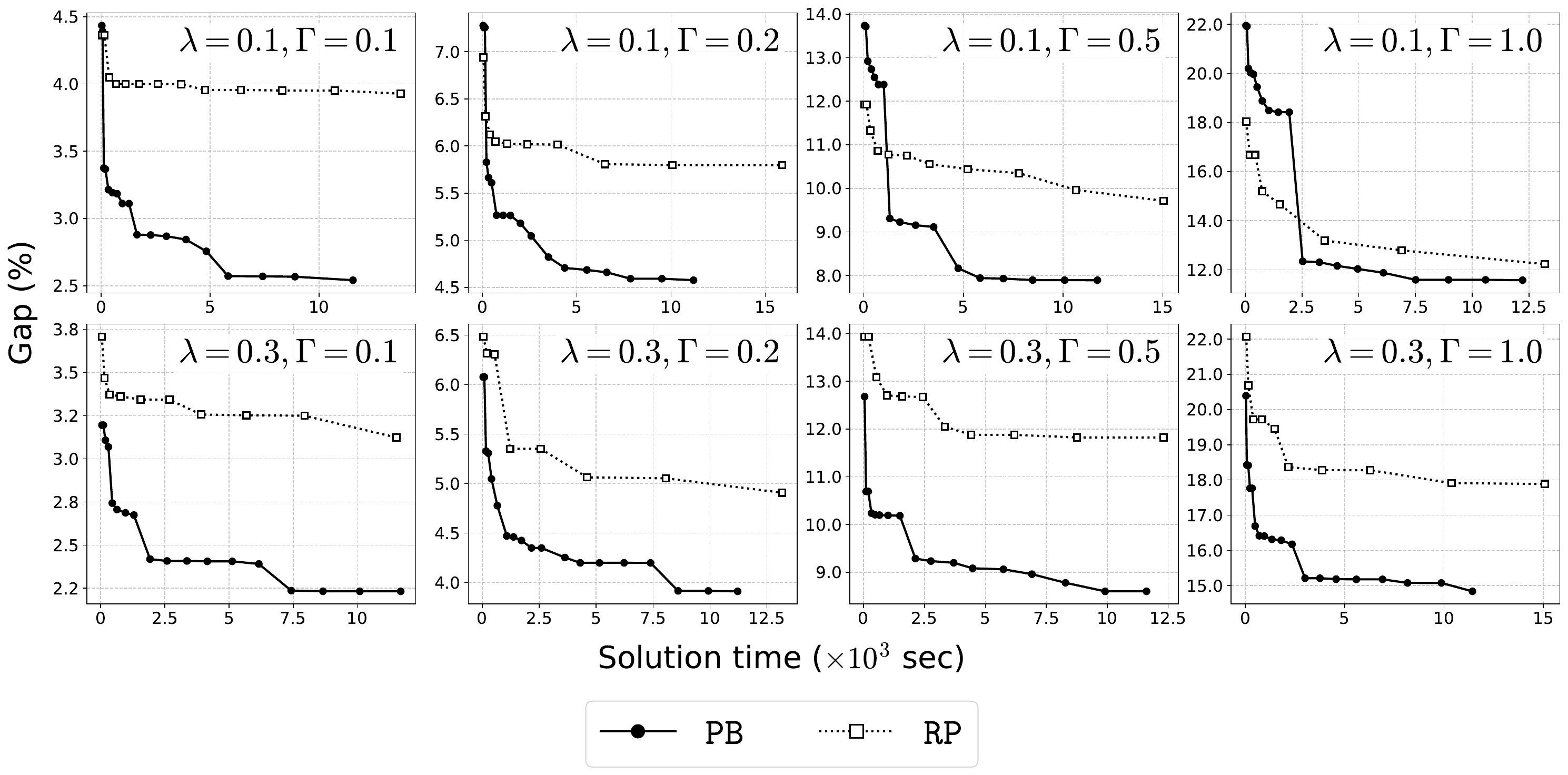}
\caption{Optimality gap and solution time of the proposed \texttt{PB} method and \texttt{RP} for the \CSO\ model.}
\label{F:Proposed_vs_RP}
\end{figure}

\section{Conclusion}\label{S4:conclusions}
\rmb{We formulated a two-stage adaptive robust mixed-integer optimization framework for disaster evacuation preparedness under uncertain evacuee counts. The model jointly optimizes shelter locations, evacuation route assignment, relief supply trailer prepositioning, and adaptive post-disaster distribution of prepositioned supplies while balancing worst-case unmet relief item demand and congestion-dependent evacuation time. There is a trade-off between relief demand coverage and evacuation time because shelter and trailer locations that improve relief coverage may concentrate traffic on a part of the road network. To the best of our knowledge, this is the first robust optimization model that integrates these preparedness and post-disaster distribution decisions in a single adaptive framework. We also developed user-route-choice and fixed allocation benchmark models to quantify the value of centralized route assignment and adaptive relief supply redistribution.}

\rmb{We provided complexity insights, showing that although the fixed-scenario recourse problem is polynomially solvable, optimizing the first-stage preparedness decisions is NP-hard even without uncertainty, robust verification of a fixed plan is coNP-hard, and the decision version of \PWLCSO{} with a bounded polyhedral uncertainty set is $\Sigma^p_2$-complete. Motivated by these results, we developed a partition-and-bound algorithm that combines finite-scenario lower bounds with finitely adaptable upper bounds and refines the uncertainty set through a tree-based active-partition rule. This design controls the representation of uncertainty and the granularity of adaptability while maintaining valid bounds on the fully adaptive robust optimum.}

\rmb{The Sioux Falls case study illustrates the operational and computational value of the framework. Centralized route assignment improves the trade-off between worst-case unmet demand and evacuation time relative to user route choice, primarily by reducing evacuation time. Adaptive post-disaster redistribution provides additional value for covering relief demand, especially when trailer resources are limited and prepositioned supplies are spatially less flexible. The comparison with the benchmarks shows that centralized routing and adaptive redistribution play complementary roles, with the largest benefits arising when both are optimized jointly. The computational comparison further indicates that the proposed active-partition refinement strategy produces high-quality feasible solutions and competitive bounds in the tested instances.}

Future work can incorporate 
other uncertainties, such as road capacities, travel times, shelter availability, or supply disruptions; extend 
to multistage settings with evolving demand and network conditions; and develop richer behavioral models of evacuee compliance and route choice.

\bibliographystyle{apalike}
\bibliography{evacuation_bib,consolidated_bib}



%
\begin{APPENDICES}
\renewcommand{\thesection}{Appendix \Alph{section}}

\section{Projection Proof}
\label{app:projection}

\projection*
\begin{proof}
(\rmb{$\subseteq$}): Let $(\hat\pi,\hat\tau)\in \operatorname{Proj}_{(\pi,\tau)}(Q)$. Then there exists
$\hat q$ such that $(\hat\pi,\hat\tau,\hat q)\in Q$. \rmb{For each $i\in\mathcal{O}$, the first two equalities in $Q$ imply} $\hat q_i\sum_{\rmb{j \in \mathcal{J}}}\sum_{r\in \mathcal{R}_{ij}} v_{r}\hat\tau_{r} = 1$. \rmb{Hence}, $\sum_{\rmb{j \in \mathcal{J}}}\sum_{r\in \mathcal{R}_{ij}} v_{r}\hat\tau_{r} > 0$. \rmb{Since $v_r>0$, at least one route has $\hat\tau_r=1$, so~\eqref{eq:route_assign_model2} holds. Moreover,} 
we have
$\hat{q}_i = \frac{1}{\sum_{\rmb{j \in \mathcal{J}}}\sum_{r' \in \mathcal{R}_{ij}} v_{r'}\hat{\tau}_{r'}}$. Substituting this into the first equality in $Q$ \rmb{gives~\eqref{choice-model-const_}. Therefore $(\hat\pi,\hat\tau)\in\mathcal{V}$.}

(\rmb{$\supseteq$}): Let $(\tilde{\pi},\tilde{\tau})\in \mathcal{V}$. For each $i\in\mathcal{O}$, define
$
\tilde{q}_i:=
\frac{1}{\sum_{\rmb{j \in \mathcal{J}}}\sum_{r\in\mathcal{R}_{ij}} v_{r}\tilde{\tau}_{r}}.
$,  \rmb{which is well-defined since its denominator is positive by~\eqref{eq:route_assign_model2} and $v_r>0$.} 
\rmb{Constraint~\eqref{choice-model-const_} then gives $\tilde\pi_r=v_r\tilde\tau_r\tilde q_i$ for every $r\in\mathcal{R}_{ij}$. Summing 
them
over all routes from origin $i$ gives $\sum_{j\in\mathcal{J}}\sum_{r\in\mathcal{R}_{ij}}\tilde\pi_r=1$. Thus $(\tilde\pi,\tilde\tau,\tilde q)\in Q$, and $(\tilde\pi,\tilde\tau)\in\operatorname{Proj}_{\pi,\tau}(Q)$.}
\end{proof}

\section{Complexity Proofs}
\label{app:complexity_proofs}

This appendix restates and proves the results in Section~\ref{sec:complexity}. 

\proprhstocoeffARO*

\begin{proof}
Under the finite-rational encoding convention given in Section~\ref{sec:complexity}, the feasibility question for a two-stage ARO problem is given by: decide on the first-stage variables; then, for every uncertainty realization, decide on the recourse variables, and check whether the linear constraints are satisfied. This has the quantifier pattern 
$
  \exists\ \texttt{first-stage decision},
$
$
  \forall\ \texttt{uncertainty realization},
$
$
  \exists\ \texttt{recourse decision}.
$
For any choice of these three objects, it remains only to verify the first-stage constraints and either determine that the uncertainty realization lies outside the rational polyhedral set \(\Xi\), or, if it lies in \(\Xi\), verify the feasibility of the finite system of second-stage linear inequalities. Each of these checks can be performed in polynomial time. Hence, the feasibility problem belongs to $\Sigma^p_3$.

For hardness, it suffices to identify a $\Sigma^p_3$-hard subclass of the stated technology-uncertain ARO family. \rmb{We use the right-hand-side uncertain ARO result of}~\cite{goerigk2024complexity}. \rmb{Their Theorem 5} establishes $\Sigma^p_3$-\rmb{hardness} for a two-stage ARO feasibility class with mixed-binary recourse,  deterministic technology and recourse matrices, and continuous budgeted uncertainty in the right-hand side. \rmb{Since mixed-binary recourse is a special case of mixed-integer recourse, this source class is contained in the recourse class considered here.} A representative source instance \rmb{after collecting deterministic domain and side constraints} has constraints of the form
\[
\exists x\;\forall \xi\in\Xi\;\exists z(\xi):\quad Ax+Bz(\xi)\le d+D\xi,
\]
where $\Xi$ is the continuous budgeted uncertainty set, $A$ is the deterministic technology matrix, $B$ is the deterministic recourse matrix, and the uncertainty appears on the right-hand side.

We construct a transformed instance with an uncertain technology matrix as follows. Add one first-stage variable $q$, impose $q=1$ by deterministic rational inequalities $q\le1$ and $-q\le-1$, and restate the constraint with uncertain right-hand side by
\[
\exists x\;\forall \xi\in\Xi\;\exists z(\xi):\quad Ax+Bz(\xi)-(D\xi)q\le d.
\]
\rmb{Equivalently, with augmented first-stage vector \((x,q)\), the technology matrix \([A; -D\xi]\) is affine in the uncertainty vector \(\xi\).}
The term $-(D\xi)$ is an uncertain coefficient that multiplies a first-stage variable $q$, so the uncertainty enters affinely in the technology matrix, whereas the right-hand side is deterministic. Side constraints and variable-domain restrictions from the source instance are kept unchanged, \rmb{except that the constraints fixing $q$ are added as the deterministic linear side constraints}.

The transformation preserves feasibility. If the source instance is feasible, the same $x$ and recourse decisions $z(\xi)$, together with $q=1$, satisfy the transformed constraints for every $\xi\in\Xi$. Conversely, any feasible solution of the transformed instance has $q=1$, and substituting this value recovers the source constraints with right-hand-side uncertainty. Thus, the source and transformed instances are equivalent in terms of feasibility.

The transformation also preserves the structural features needed for Theorem~5 in \citet{goerigk2024complexity}. The recourse matrix $B$ is unchanged and remains deterministic; the uncertainty set $\Xi$ is unchanged and remains continuous budgeted; the decision timing remains $\exists\,\forall\,\exists$; and the construction has polynomial size, adding one scalar variable, two deterministic rational inequalities, and \rmb{an affine technology-matrix representation using} the same uncertain linear terms moved from the right-hand side to the technology matrix. Hence, 
\rmb{the stated technology matrix-uncertain ARO class} contains a polynomially embedded $\Sigma^p_3$-hard subclass. Along with membership in $\Sigma^p_3$, this proves $\Sigma^p_3$-completeness. 
\end{proof}

\subsection{Hardness of the Recourse Problem}\label{app:tractable_recourse}

Given first-stage decisions  $(b,s^1,y)$ and realized evacuee counts $\xi$, let
\[
D_j:=\sum_{i\in\mathcal{O}}\sum_{r\in\mathcal{R}_{ij}}\xi_i y_r,\quad C_\ell:=Ks^1_\ell.
\]
\rmb{Because evacuee counts and route assignments are nonnegative, each $D_j$ is nonnegative. Because $K$ and $s^1_\ell$ are integer-valued, each $C_\ell$ is an integer capacity. We prove a slightly more general statement with nonnegative shelter weights $w_j$; the \PWLCSO{} objective corresponds to $w_j=1$ for all $j \in\mathcal{J}$.} For nonnegative shelter weights $w_j$, the weighted-unmet-demand shipment problem is
\begin{equation}\label{eq:appendix_fixed_scenario}
\begin{aligned}
\min_{s^2,u}\quad & \sum_{j\in\mathcal{J}}w_j u_j \\
\text{s.t.}\quad
& D_j \le u_j+m\sum_{\ell\in\mathcal{L}_j}s^2_{\ell j} && j\in\mathcal{J},\\
& \sum_{j\in\mathcal{J}\,:\,\ell\in\mathcal{L}_j}s^2_{\ell j}\le C_\ell && \ell\in\mathcal{L},\\
& s^2_{\ell j}\in\mathbb{Z}_+, && j\in\mathcal{J},\ \ell\in\mathcal{L}_j,\\
& u_j\ge 0 && j\in\mathcal{J}.
\end{aligned}
\end{equation}

\propcomplexityrecourse*

\begin{proof}
The proof interprets relief supply pallets as flows and each pallet shipment as a marginal reduction in unmet demand. For shelter $j$, let $q_j=\lfloor D_j/m\rfloor$ and $r_j=D_j-q_jm$. Since $D_j\ge0$ and $m>0$, this gives $q_j\in\mathbb Z_+$ and $0\le r_j<m$. The first $q_j$ pallets delivered to shelter $j$ each reduce weighted unmet demand by $w_jm$; if $r_j>0$, one additional pallet has residual value $w_jr_j$; further pallets have no value. \rmb{Equivalently, if \(p_j=\sum_{\ell\in\mathcal L_j}s^2_{\ell j}\) pallets are shipped to shelter \(j\), then the optimal unmet demand at that shelter is \(u_j=\max\{0,D_j-mp_j\}\), and its contribution to the objective is}
\[
\rmb{
w_j\max\{0,D_j-mp_j\}
=
w_jD_j-w_j\min\{D_j,mp_j\}.}
\]
\rmb{Thus minimizing weighted unmet demand is equivalent to maximizing the total weighted covered demand \(\sum_{j\in\mathcal J}w_j\min\{D_j,mp_j\}\).}

We build a directed network with a source $o$, trailer-location nodes $L_\ell$, shelter nodes $J_j$, and sink $t$. Add the arc $o\to L_\ell$ with capacity $C_\ell$ and cost zero; for each pair $(\ell,j)$ such that $\ell\in\mathcal{L}_j$, add $L_\ell\to J_j$ with capacity $C_\ell$ and cost zero; add an unused-pallet arc $L_\ell\to t$ with capacity $C_\ell$ and cost zero. For each shelter $j$, add a full-pallet arc $J_j\to t$ with capacity $q_j$ and cost $-w_jm$; if $r_j>0$, add a residual arc $J_j\to t$ with capacity one and cost $-w_jr_j$. Total supply at \(o\) is \(\sum_{\ell\in\mathcal L} C_\ell\) units  and demand at \(t\) is \(-\sum_{\ell\in\mathcal L} C_\ell\). Figure~\ref{fig:recourse_network_examples} illustrates this construction for an example with two shelters and two trailer locations, showing how the optimal flow changes as trailer capacity is reduced.

\tikzset{
  source/.style={
    draw,
    circle,
    minimum size=6.5mm,
    inner sep=0pt
  },
  nodebox/.style={
    draw,
    rounded corners,
    minimum width=9mm,
    minimum height=6.5mm,
    align=center
  },
  edgelabel/.style={
    font=\scriptsize,
    fill=white,
    inner sep=1pt,
    align=center
  },
  optone/.style={
    draw=blue,
    very thick,
    postaction={decorate},
    decoration={
      markings,
      mark=at position 0.52 with {
        \arrow{Triangle[length=3.2mm,width=3.0mm]}
      }
    }
  },
  opttwo/.style={
    draw=blue,
    very thick,
    postaction={decorate},
    decoration={
      markings,
      mark=at position 0.46 with {
        \arrow{Triangle[length=3.2mm,width=3.0mm]}
      },
      mark=at position 0.58 with {
        \arrow{Triangle[length=3.2mm,width=3.0mm]}
      }
    }
  },
  optthree/.style={
    draw=blue,
    very thick,
    postaction={decorate},
    decoration={
      markings,
      mark=at position 0.42 with {
        \arrow{Triangle[length=3.2mm,width=3.0mm]}
      },
      mark=at position 0.52 with {
        \arrow{Triangle[length=3.2mm,width=3.0mm]}
      },
      mark=at position 0.62 with {
        \arrow{Triangle[length=3.2mm,width=3.0mm]}
      }
    }
  },
  avail/.style={
    ->,
    thin,
    gray!75
  },
  unused/.style={
    ->,
    thin,
    dashed,
    gray!70
  }
}

\begin{figure}[t]
\centering

\begin{subfigure}[t]{0.48\textwidth}
\centering
\resizebox{\textwidth}{!}{%
\begin{tikzpicture}[x=1cm,y=1cm,>=Latex,every node/.style={font=\small}]

\node[source]  (o)  at (0,0) {$o$};
\node[nodebox] (LA) at (2.4, 1.7) {$L_A$};
\node[nodebox] (LB) at (2.4,-1.7) {$L_B$};
\node[nodebox] (J1) at (6.1, 1.7) {$J_1$};
\node[nodebox] (J2) at (6.1,-1.7) {$J_2$};
\node[source]  (t)  at (9.9,0) {$t$};

\node[font=\scriptsize] at (2.4, 2.3) {$C_A=3$};
\node[font=\scriptsize] at (2.4,-2.3) {$C_B=2$};
\node[font=\scriptsize] at (6.15, 2.55) {$D_1=25$};
\node[font=\scriptsize] at (6.15, 2.25) {$w_1=1$};
\node[font=\scriptsize] at (6.15,-2.55) {$w_2=2$};
\node[font=\scriptsize] at (6.15,-2.25) {$D_2=18$};

\draw[optthree] (o) -- (LA);
\node[edgelabel] at (0.9, 1.3) {cap \(3\)\\cost \(0\)};

\draw[opttwo] (o) -- (LB);
\node[edgelabel] at (0.9,-1.3) {cap \(2\)\\cost \(0\)};

\draw[optthree] (LA) -- (J1);
\node[edgelabel] at (4.1, 2.25) {cap \(3\)\\cost \(0\)};

\draw[avail] (LA) to[bend right=30] (J2);
\node[edgelabel] at (2.2,0.7) {cap \(3\)\\cost \(0\)};

\draw[avail] (LB) to[bend left=30] (J1);
\node[edgelabel] at (2.2, -0.5) {cap \(2\)\\cost \(0\)};

\draw[opttwo] (LB) -- (J2);
\node[edgelabel] at (4.25,-2.15) {cap \(2\)\\cost \(0\)};

\draw[unused] (LA) to[bend left=55] (t);
\node[edgelabel] at (5.15, 3.2) {cap \(3\), cost \(0\)};

\draw[unused] (LB) to[bend right=58] (t);
\node[edgelabel] at (5.15,-3.3) {cap \(2\), cost \(0\)};

\draw[opttwo] (J1) to[bend left=28] (t);
\node[edgelabel] at (7.25, 2.1) {cap \(2\)\\cost \(-10\)};

\draw[optone] (J1) to[bend right=12] (t);
\node[edgelabel] at (6.7, 0.6) {cap \(1\)\\cost \(-5\)};

\draw[optone] (J2) to[bend left=12] (t);
\node[edgelabel] at (6.58,-0.6) {cap \(1\)\\cost \(-20\)};

\draw[optone] (J2) to[bend right=28] (t);
\node[edgelabel] at (7.4,-2.1) {cap \(1\)\\cost \(-16\)};

\end{tikzpicture}%
}

\vspace{0.35em}
\scriptsize
\begin{tabular}{@{}lll@{}}
\toprule
Optimal shipments & Unmet demand & Objective \\
\midrule
\(p_1=3,\ p_2=2\) &
\(u_1^\ast=0,\ u_2^\ast=0\) &
\(61+\kappa^\ast=0\) \\
\bottomrule
\end{tabular}

\caption{\small Capacity \(C_A+C_B=5\), with \(\kappa^\ast=-61\).}
\label{fig:recourse_network_full}
\end{subfigure}
\hfill
\begin{subfigure}[t]{0.48\textwidth}
\centering
\resizebox{\textwidth}{!}{%
\begin{tikzpicture}[x=1cm,y=1cm,>=Latex,every node/.style={font=\small}]

\node[source]  (o)  at (0,0) {$o$};
\node[nodebox] (LA) at (2.4, 1.7) {$L_A$};
\node[nodebox] (LB) at (2.4,-1.7) {$L_B$};
\node[nodebox] (J1) at (6.1, 1.7) {$J_1$};
\node[nodebox] (J2) at (6.1,-1.7) {$J_2$};
\node[source]  (t)  at (9.9,0) {$t$};

\node[font=\scriptsize] at (2.4, 2.3) {$C_A=2$};
\node[font=\scriptsize] at (2.4,-2.3) {$C_B=1$};
\node[font=\scriptsize] at (6.15, 2.55) {$D_1=25$};
\node[font=\scriptsize] at (6.15, 2.25) {$w_1=1$};
\node[font=\scriptsize] at (6.15,-2.55) {$w_2=2$};
\node[font=\scriptsize] at (6.15,-2.25) {$D_2=18$};

\draw[opttwo] (o) -- (LA);
\node[edgelabel] at (0.9, 1.3) {cap \(2\)\\cost \(0\)};

\draw[optone] (o) -- (LB);
\node[edgelabel] at (0.9, -1.3) {cap \(1\)\\cost \(0\)};

\draw[optone] (LA) -- (J1);
\node[edgelabel] at (4.1, 2.25) {cap \(2\)\\cost \(0\)};

\draw[optone] (LA) to[bend right=30] (J2);
\node[edgelabel] at (2.2,0.7) {cap \(2\)\\cost \(0\)};

\draw[avail] (LB) to[bend left=30] (J1);
\node[edgelabel] at (2.2, -0.5) {cap \(1\)\\cost \(0\)};

\draw[optone] (LB) -- (J2);
\node[edgelabel] at (4.25,-2.15) {cap \(1\)\\cost \(0\)};

\draw[unused] (LA) to[bend left=55] (t);
\node[edgelabel] at (5.15, 3.2) {cap \(2\), cost \(0\)};

\draw[unused] (LB) to[bend right=58] (t);
\node[edgelabel] at (5.15,-3.3) {cap \(1\), cost \(0\)};

\draw[optone] (J1) to[bend left=28] (t);
\node[edgelabel] at (7.25, 2.1) {cap \(2\)\\cost \(-10\)};

\draw[avail] (J1) to[bend right=12] (t);
\node[edgelabel] at (6.72, 0.6) {cap \(1\)\\cost \(-5\)};

\draw[optone] (J2) to[bend left=12] (t);
\node[edgelabel] at (6.58,-0.6) {cap \(1\)\\cost \(-20\)};

\draw[optone] (J2) to[bend right=28] (t);
\node[edgelabel] at (7.4,-2.1) {cap \(1\)\\cost \(-16\)};

\end{tikzpicture}%
}

\vspace{0.35em}
\scriptsize
\begin{tabular}{@{}lll@{}}
\toprule
Optimal shipments & Unmet demand & Objective \\
\midrule
\(p_1=1,\ p_2=2\) &
\(u_1^\ast=15,\ u_2^\ast=0\) &
\(61+\kappa^\ast=15\) \\
\bottomrule
\end{tabular}

\caption{\small Capacity \(C_A+C_B=3\), with \(\kappa^\ast=-46\).}
\label{fig:recourse_network_limited}
\end{subfigure}

\caption{Minimum-cost network flow interpretation of deterministic recourse problem. Each pallet is one unit of flow. Arc labels show capacity and cost. Unused pallets flow through dashed arcs. Blue arcs show an optimal flow; the number of triangular markers on a blue arc gives the amount of flow on that arc. Gray arcs are available but unused. In both panels, \(m=10\), \(D_1=25\), \(D_2=18\), \(w_1=1\), and \(w_2=2\).}
\label{fig:recourse_network_examples}
\end{figure}

Flow conservation at $J_j$ forces the $L_\ell\to J_j$ inflow to equal the $J_j\to t$ outflow, whose total capacity is $q_j+\mathbf{1}_{\{r_j>0\}}$. Hence, excess \rmb{pallets} at $\ell$ flow through $L_\ell\to t$. \rmb{Moreover, because \(-w_jm\le -w_jr_j\) for \(0\le r_j<m\) and \(w_j\ge0\), an optimal min-cost flow fills the full-pallet marginal arc before using the residual marginal arc, up to irrelevant ties when \(w_j=0\). Therefore, for any integer inflow \(p_j\) to \(J_j\), the minimum possible cost of the arcs leaving \(J_j\) is \(-w_j\min\{D_j,mp_j\}\).} 
Consider any feasible solution to~\eqref{eq:appendix_fixed_scenario}. If a shelter receives shipments more than its last useful pallet, reroute those excess pallets through unused-pallet arcs \(L_\ell\to t\); send the remaining useful pallets on arcs \(L_\ell\to J_j\), route the corresponding
marginal reductions through the arcs \(J_j\to t\). \rmb{This yields a feasible flow whose cost is}
\[
\rmb{
-\sum_{j\in\mathcal J}w_j\min\{D_j,mp_j\}.}
\]
Hence, the network optimizes exactly the same
objective function as \eqref{eq:appendix_fixed_scenario}.

With integer capacities, the integrality theorem for minimum-cost network flow yields an optimal integral flow. Let $p_j^\ast$ be the total inflow to shelter node $J_j$, which corresponds to the total number of pallets shipped to shelter $j$. Then, $(s^{2\ast},u^\ast)$ is an optimal solution to~\eqref{eq:appendix_fixed_scenario}, where  $s^{2\ast}_{\ell j}$ is equal to the flow $L_\ell \to J_j$ and $u_j^\ast=\max\{0,\,D_j-mp_j^\ast\}$. \rmb{The trailer-capacity constraints follow from flow conservation at the \(L_\ell\) nodes and the capacities of the arcs \(o\to L_\ell\), while the unmet-demand constraints follow from the definition of \(u_j^\ast\).} Since flow costs are the negatives of marginal reductions in weighted unmet demand, the optimal unmet-demand objective equals \(\sum_{j\in\mathcal J}w_jD_j+\kappa^\ast\), where $\kappa^\ast$ is the optimal min-cost network flow value. The network has $O(|\mathcal{J}|+|\mathcal{L}|)$ nodes and $O\!\left(|\mathcal{L}|+|\mathcal{J}|+\sum_{j\in\mathcal{J}}|\mathcal{L}_j|\right)$ arcs, so standard min-cost network flow algorithms run in polynomial time in the rational encoding length.  
\end{proof}

\propcomplexitydet*

\begin{proof}
Membership in NP follows from the finite-rational encoding convention stated in
Section~\ref{sec:complexity}. In the deterministic case, a certificate
consists of the shelter opening variables, trailer prepositioning variables, route assignment
variables, shipment variables, unmet demand variables, and PWL segment flow variables.
For any fixed certificate, constraints and the threshold objective inequality
can be checked in polynomial time by evaluating a finite system of rational linear
constraints. Hence, the deterministic decision problem is in NP.

For NP-hardness, we reduce from the exact-\(p\) median decision problem. An
instance consists of a finite customer set \(I\), a finite candidate facility set \(J\),
positive rational costs \(c_{ij}\) for assigning customer \(i\) to facility \(j\), an integer
\(p\) with \(1\le p\le |J|\), and a rational threshold \(B\). The question is whether there
exists a set \(P\subseteq J\) with \(|P|=p\) such that $\sum_{i\in I}\min_{j\in P} c_{ij}\le B$.
This decision problem is NP-hard \citep{hakimi1979algorithmic}\footnote{\cite{hakimi1979algorithmic} states the result for network-distance \(p\)-median instances, which may include zero self-assignment distances. For the case with strictly positive assignment costs, adding one to every assignment cost shifts every exact-\(p\) solution by \(|I|\) and preserves the threshold decision problem.}.

The reduction is direct: customers become origins, candidate facilities become candidate
shelters, opening exactly \(p\) facilities becomes the shelter-opening constraint \(N=p\),
and private route arcs encode the assignment costs. Build a deterministic \(\PWLCSO{}\)
instance with \(\mathcal{O}=I\), \(\mathcal{J}=J\), \(\mathcal{L}=J\), a single scenario
\(\xi_i=1\), \(N=p\), and threshold \(\eta=B\). For each pair \((i,j)\), create a private route
\(r_{ij}\) consisting of an arc \(a_{ij}\) with capacity $c_{a_{ij}}=1$. 
\rmb{Write \(y_{ij}:=y_{r_{ij}}\), and note that no other route uses arc \(a_{ij}\).} 
For each origin \(i\), set all route lengths
\(d_{r_{ij}}\) equal. Then \(\hat d_{ik}\) equals this common value for every candidate
shelter \(k\in\mathcal{J}\), and the strict inequality \(d_r>\hat d_{ik}\) never holds, so
the route-length acceptability constraint is non-binding.

\rmb{Route lengths are used only for the acceptability constraints; the PWL travel-cost data on the private arcs are specified independently in this instance.} Choose PWL parameters and arc-specific free-flow times so that the first segment covers unit
load, i.e., \(c_a(\rho^1-\rho^0)\ge1\), and has positive rational slope \(\PWLSlope_1\).
Set \(t^0_{a_{ij}}=c_{ij}/\PWLSlope_1\) and \(\lambda=1\). 
Since \(\sum_j y_{ij}=1\) and
\(y_{ij}\ge0\), sending load
\(y_{ij}\in[0,1]\) along arc \(a_{ij}\) stays on the first PWL segment and costs
\(t^0_{a_{ij}}\PWLSlope_1 y_{ij}=c_{ij}y_{ij}\). 

Make unmet relief demand zero by setting \(\mathcal{L}_j=\mathcal{L}\), \(m=1\), \(S=1\), and
\(K=|I||J|\). Every trailer location can serve every shelter because
\(\mathcal{L}_j=\mathcal{L}\). Moreover, under any feasible route assignment, the demand
at each shelter is at most \(|I|\); hence, a single trailer with \(|I||J|\) pallets is enough
to eliminate unmet demand at all shelters. \rmb{More explicitly, if \(D_j=\sum_{i\in I}y_{ij}\), then \(D_j\le |I|\) for every \(j\), so \(\sum_{j\in J}\lceil D_j\rceil\le |I||J|=K\). Thus the integer pallet-shipment variables can cover all shelter demands, possibly by oversupplying fractional remainders.}
Since the unmet demand is zero, for any open shelter set \(P\) of size \(p\), the route-assignment problem reduces to
\[
\min \sum_{i\in I}\sum_{j\in P} c_{ij}y_{ij}
\quad
\text{s.t.}\quad
\sum_{j\in P}y_{ij}=1,\quad y_{ij}\ge0.
\]
Its optimum assigns each origin to the cheapest open shelter. Therefore, the constructed
\(\PWLCSO{}\) objective coincides with the exact-\(p\) median objective for the corresponding
instance, and the threshold \(\eta=B\) is preserved. \rmb{The construction creates \(|I||J|\) routes and \(|I||J|\) private arcs and uses rational parameters with polynomial encoding length, so it is polynomial in the exact-\(p\) median input size.}

\rmb{It remains only to state the equivalence explicitly. If the exact-\(p\) median instance has a facility set \(P\subseteq J\) with \(|P|=p\) and \(\sum_{i\in I}\min_{j\in P}c_{ij}\le B\), open exactly the shelters in \(P\), assign each origin \(i\) to a cheapest open shelter in \(P\), preposition the single trailer at any location, and ship enough pallets to make all unmet demand zero. The resulting \(\PWLCSO{}\) solution has objective value at most \(B=\eta\). Conversely, suppose the constructed \(\PWLCSO{}\) instance has a feasible solution with objective value at most \(\eta=B\). Let \(P=\{j\in J:b_j=1\}\). Then \(|P|=p\), and the open-shelter constraints imply \(y_{ij}=0\) for all \(j\notin P\). Since unmet demand is nonnegative and the PWL travel contribution is at least \(\sum_{i\in I}\sum_{j\in P}c_{ij}y_{ij}\), we have}
\[
\rmb{
\sum_{i\in I}\min_{j\in P}c_{ij}
\le
\sum_{i\in I}\sum_{j\in P}c_{ij}y_{ij}
\le B.}
\]
\rmb{Hence the exact-\(p\) median instance is a yes-instance if and only if the constructed deterministic \(\PWLCSO{}\) instance is a yes-instance.} 
NP-hardness follows. 
\end{proof}

\propcomplexityfixedplan*

\begin{proof}
We reduce from the complement of CLIQUE. An instance of this problem is a simple graph \(G=(V,E)\), \(|V|=n\), and an integer \(2\le k\le n\); the yes-instances are those in which \(G\) has no \(k\)-clique. Create one
origin in \PWLCSO{}\ per vertex $v \in V$, that is \(\mathcal{O}=\{o_v:v\in V\}\). Set \(N=1\) with
\(\mathcal{J}=\mathcal{L}=\{d\}\). Use the budgeted uncertainty set
\[
\Xi=\left\{\xi\in[0,1]^n:\sum_{i=1}^n\xi_i\le k\right\}.
\]
Total evacuee count in any \(\xi\in\Xi\) is at most \(k\). To ensure that unmet demand does not affect the reduction, set \(S=1\), \(K=k\), \(m=1\), and \(\mathcal L_d=\{d\}\). A feasible first-stage plan can place the single trailer at \(d\), and the recourse choice \(s^2_{dd}=k\) satisfies the demand-covering constraint with \(u_d=0\). Hence, unmet demand is zero in this instance. Set \(\lambda=1\).

\rmb{We now construct the road network for the \PWLCSO{}\ instance, which consists of a single route $r_v$ between each origin $o_v$ and shelter location $d$. For each edge \(e\in E\), create one shared arc \(a_e\) between two auxiliary nodes in the road network. These auxiliary nodes serve only as endpoints for \(a_e\) and the route-private connector arcs described below.  For each \(v\in V\), order the edges $e \in E$ incident to \(v\) arbitrarily as \(e^v_1,\ldots,e^v_{\deg(v)}\). If \(\deg(v)>0\), the route \(r_v\) that starts at \(o_v\), uses a route-private connector arc to reach the tail of \(a_{e^v_1}\), traverses the shared arcs \(a_{e^v_1},\ldots,a_{e^v_{\deg(v)}}\)  using route-private connectors from the head of \(a_{e^v_h}\) to the tail of \(a_{e^v_{h+1}}\) for \(h=1,\ldots,\deg(v)-1\), and then uses a route-private connector arc leaving the head of \(a_{e^v_{\deg(v)}}\) to the shelter node \(d\). Thus, before padding, \(r_v\) contains \(\deg(v)\) shared arcs and \(\deg(v)+1\) private connector arcs. If \(\deg(v)=0\), route \(r_v\) is initially a single arc from \(o_v\) to \(d\). Finally, extend the terminal route-private connector to $d$ into a route-private chain by adding dummy arcs, so that the route between every origin $o_v$ and $d$ uses exactly \(R:=2\Delta(G)+1\) arcs, where \(\Delta(G)\) is the maximum degree of \(G\). For \(\deg(v)=0\), turn \(r_v\) into a route-private chain from \(o_v\) to \(d\) of length \(R\) by adding dummy arcs. Only the arcs \(a_e\) are shared between routes; connector and dummy arcs are route-private. 
Furthermore,  \(d_{r_v}=\hat d_{o_vd}=R\) for every \(v\in V\)  so that every listed route is acceptable when \(d\) is open. Figures~\ref{fig:fixed_plan_clique_example}a and \ref{fig:fixed_plan_clique_example}b illustrate the four-vertex source graph $G(V,E)$ and the routes between each origin and shelter location with private and shared arcs, respectively.}

\begin{figure}[t]
\centering
\begin{subfigure}[t]{0.34\textwidth}
\centering
\resizebox{0.9\textwidth}{!}{%
\begin{tikzpicture}[
    vertex/.style={circle, draw, minimum size=7mm, inner sep=0pt, font=\small},
    cliquev/.style={circle, draw, fill=blue!12, minimum size=7mm, inner sep=0pt, font=\small},
    gedge/.style={line width=0.7pt, gray!70},
    cliqueedge/.style={line width=1.2pt, blue!75}
]

\node[cliquev] (v1) at (0,1.7) {$1$};
\node[cliquev] (v2) at (-1.35,0) {$2$};
\node[cliquev] (v3) at (1.35,0) {$3$};
\node[vertex]  (v4) at (2.8,0) {$4$};

\draw[cliqueedge] (v1) -- (v2);
\draw[cliqueedge] (v1) -- (v3);
\draw[cliqueedge] (v2) -- (v3);
\draw[gedge]      (v3) -- (v4);

\node[font=\scriptsize, align=center] at (1.15,-1.05)
{\(V=\{1,2,3,4\}\)\\
\(E=\{\{1,2\},\{1,3\},\{2,3\},\{3,4\}\}\)};

\end{tikzpicture}%
}
\caption{\small Source graph \(G\).}
\label{fig:fixed_plan_clique_graph}
\end{subfigure}
\hfill
\begin{subfigure}[t]{0.62\textwidth}
\centering
\resizebox{0.98\textwidth}{!}{%
\begin{tikzpicture}[
    >=Latex,
    arcbox/.style={
      draw,
      rounded corners,
      minimum width=9mm,
      minimum height=6mm,
      inner sep=1pt,
      font=\scriptsize,
      align=center
    },
    origin/.style={circle, draw=green!75!black, fill=green!8, minimum size=6mm, inner sep=0pt, font=\scriptsize},
    shelter/.style={rectangle, draw=green!75!black, fill=green!8, rounded corners=1pt, minimum width=5mm, minimum height=30mm, font=\scriptsize},
    shared/.style={arcbox, fill=blue!10},
    private/.style={arcbox, fill=gray!8},
    dummy/.style={arcbox, fill=gray!20, dashed},
    routearrow/.style={->, thin, gray!75},
    rowlabel/.style={font=\small, align=right}
]

\node[rowlabel] (r1) at (-1.75, 0) {$r_1:$};
\node[rowlabel] (r2) at (-1.75,-0.85) {$r_2:$};
\node[rowlabel] (r3) at (-1.75,-1.70) {$r_3:$};
\node[rowlabel] (r4) at (-1.75,-2.55) {$r_4:$};

\node[origin] (o1) at (-1.05, 0)     {\(o_1\)};
\node[origin] (o2) at (-1.05,-0.85)  {\(o_2\)};
\node[origin] (o3) at (-1.05,-1.70)  {\(o_3\)};
\node[origin] (o4) at (-1.05,-2.55)  {\(o_4\)};

\node[private] (r1c1) at (0,0)       {\(p^0_1\)};
\node[shared]  (r1c2) at (1.15,0)    {\(a_{12}\)};
\node[private] (r1c3) at (2.30,0)    {\(p^1_1\)};
\node[shared]  (r1c4) at (3.45,0)    {\(a_{13}\)};
\node[private] (r1c5) at (4.60,0)    {\(p^2_1\)};
\node[dummy]   (r1c6) at (5.75,0)    {\(\delta^1_1\)};
\node[dummy]   (r1c7) at (6.90,0)    {\(\delta^2_1\)};

\node[private] (r2c1) at (0,-0.85)       {\(p^0_2\)};
\node[shared]  (r2c2) at (1.15,-0.85)    {\(a_{12}\)};
\node[private] (r2c3) at (2.30,-0.85)    {\(p^1_2\)};
\node[shared]  (r2c4) at (3.45,-0.85)    {\(a_{23}\)};
\node[private] (r2c5) at (4.60,-0.85)    {\(p^2_2\)};
\node[dummy]   (r2c6) at (5.75,-0.85)    {\(\delta^1_2\)};
\node[dummy]   (r2c7) at (6.90,-0.85)    {\(\delta^2_2\)};

\node[private] (r3c1) at (0,-1.70)       {\(p^0_3\)};
\node[shared]  (r3c2) at (1.15,-1.70)    {\(a_{13}\)};
\node[private] (r3c3) at (2.30,-1.70)    {\(p^1_3\)};
\node[shared]  (r3c4) at (3.45,-1.70)    {\(a_{23}\)};
\node[private] (r3c5) at (4.60,-1.70)    {\(p^2_3\)};
\node[shared]  (r3c6) at (5.75,-1.70)    {\(a_{34}\)};
\node[private] (r3c7) at (6.90,-1.70)    {\(p^3_3\)};

\node[private] (r4c1) at (0,-2.55)       {\(p^0_4\)};
\node[shared]  (r4c2) at (1.15,-2.55)    {\(a_{34}\)};
\node[private] (r4c3) at (2.30,-2.55)    {\(p^1_4\)};
\node[dummy]   (r4c4) at (3.45,-2.55)    {\(\delta^1_4\)};
\node[dummy]   (r4c5) at (4.60,-2.55)    {\(\delta^2_4\)};
\node[dummy]   (r4c6) at (5.75,-2.55)    {\(\delta^3_4\)};
\node[dummy]   (r4c7) at (6.90,-2.55)    {\(\delta^4_4\)};

\node[shelter] (d) at (8.1,-1.26) {$d$};

\coordinate (d1) at ($(d.west)+(0,1)$);
\coordinate (d2) at ($(d.west)+(0,0.25)$);
\coordinate (d3) at ($(d.west)+(0,-0.25)$);
\coordinate (d4) at ($(d.west)+(0,-1)$);

\draw[routearrow] (o1.east) -- (r1c1.west);
\draw[routearrow] (r1c1.east) -- (r1c2.west);
\draw[routearrow] (r1c2.east) -- (r1c3.west);
\draw[routearrow] (r1c3.east) -- (r1c4.west);
\draw[routearrow] (r1c4.east) -- (r1c5.west);
\draw[routearrow] (r1c5.east) -- (r1c6.west);
\draw[routearrow] (r1c6.east) -- (r1c7.west);
\draw[routearrow] (r1c7.east) -- (d1.west);

\draw[routearrow] (o2.east) -- (r2c1.west);
\draw[routearrow] (r2c1.east) -- (r2c2.west);
\draw[routearrow] (r2c2.east) -- (r2c3.west);
\draw[routearrow] (r2c3.east) -- (r2c4.west);
\draw[routearrow] (r2c4.east) -- (r2c5.west);
\draw[routearrow] (r2c5.east) -- (r2c6.west);
\draw[routearrow] (r2c6.east) -- (r2c7.west);
\draw[routearrow] (r2c7.east) -- (d2.west);

\draw[routearrow] (o3.east) -- (r3c1.west);
\draw[routearrow] (r3c1.east) -- (r3c2.west);
\draw[routearrow] (r3c2.east) -- (r3c3.west);
\draw[routearrow] (r3c3.east) -- (r3c4.west);
\draw[routearrow] (r3c4.east) -- (r3c5.west);
\draw[routearrow] (r3c5.east) -- (r3c6.west);
\draw[routearrow] (r3c6.east) -- (r3c7.west);
\draw[routearrow] (r3c7.east) -- (d3.west);

\draw[routearrow] (o4.east) -- (r4c1.west);
\draw[routearrow] (r4c1.east) -- (r4c2.west);
\draw[routearrow] (r4c2.east) -- (r4c3.west);
\draw[routearrow] (r4c3.east) -- (r4c4.west);
\draw[routearrow] (r4c4.east) -- (r4c5.west);
\draw[routearrow] (r4c5.east) -- (r4c6.west);
\draw[routearrow] (r4c6.east) -- (r4c7.west);
\draw[routearrow] (r4c7.east) -- (d4.west);

\node[font=\scriptsize, align=center] at (3.45,0.68)
{All routes are padded to \(R=2\Delta(G)+1=7\) arcs.};

\end{tikzpicture}%
}
\caption{\small \rmb{Constructed routes. As an input to \PWLCSO{}, there is one route \(r_v\) in the route set of each origin \(o_v\) to shelter \(d\). Blue boxes denote shared arcs between routes, light gray boxes denote route-private connector arcs, and dashed gray boxes denote route-private dummy arcs used for padding at the end of each route; occurrence of a shared arc on two different routes indicates both routes use that arc.}}
\label{fig:fixed_plan_clique_routes}
\end{subfigure}

\vspace{0.8em}


\begin{subfigure}[t]{0.98\textwidth}
\centering
\resizebox{0.9\textwidth}{!}{%
\begin{tikzpicture}[
    thresholdbox/.style={
      draw,
      rounded corners,
      fill=blue!5,
      inner sep=5pt,
      minimum width=6.0cm,
      align=center,
      font=\scriptsize
    },
    box/.style={
      draw,
      rounded corners,
      fill=gray!6,
      inner sep=5pt,
      minimum width=6.25cm,
      align=center,
      font=\scriptsize
    }
]


\node[box] (clique) at (0,0)
{\textbf{Subset scenario} \(S=\{1,2,3\}\)\\[0.5mm]
Shared load-two arcs: \(a_{12},a_{13},a_{23}\)\\
\(|E(S)|=3\), so cost \(=h(1)Rk+\gamma\cdot 3=21+6=27\)\\[0.5mm]
\(\mathbf{27>\Theta}\): there is a 3-clique};

\node[box] (nonclique) at (7.1,0)
{\textbf{Non-clique subset scenario} \(S=\{1,3,4\}\)\\[0.5mm]
Shared load-two arcs: \(a_{13},a_{34}\)\\
\(|E(S)|=2\), so cost \(=h(1)Rk+\gamma\cdot 2=21+4=25\)\\[0.5mm]
\(\mathbf{25\le\Theta}\): no 3-clique};

\end{tikzpicture}%
}
\caption{\small Example for \(3\)-clique. With \(h(0)=0\), \(h(1)=1\), \(h(2)=4\), we have \(\gamma=2\).
Since $k=3$, the threshold is \(\Theta=21+2(3-\tfrac12)=26\). }
\label{fig:fixed_plan_clique_costs}
\end{subfigure}

\caption{Illustration of the reduction from the complement of CLIQUE.}
\label{fig:fixed_plan_clique_example}
\end{figure}

 It remains to choose the breakpoints of the PWL approximation to the $g(\rho)$ function, denoted by $g_{\texttt{PWL}}(\rho)$. The intended effect is as follows: in the worst case, each route with a flow contributes a fixed base amount, and each edge $e \in E$ whose both endpoints are selected to have a nonzero evacuee count realization contributes an additional increment to the approximation due to higher flow on the shared arc $a_e$. We use breakpoints $\{1,2\}$ for $g_{\texttt{PWL}}(\rho)$. Then, the slope of the first segment $\PWLSlope_1 = \frac{g(1)}{1} < \PWLSlope_2 = \frac{g(2)-g(1)}{1}$ since $g$ is a strictly increasing convex function.  For arc $a$ with load \(x\), let $\rho_a = \frac{x}{c_a}$, and \(h_a(x)=t_a^0c_ag_{\texttt{PWL}}(\rho_a)\) be the PWL approximate travel time. Set \(c_a=1\) and \(t^0_a=1\) for every arc in the road network. Then, $\rho_a = \frac{x}{c_a}=x$ and $h_a(x)=g_{\texttt{PWL}}(x)$, and we have $h(0)=0,h(1)=\PWLSlope_1$, and $h(2)=\PWLSlope_1+\PWLSlope_2$. The extra cost of carrying two units of flow on one shared arc rather than one unit on each of two private arcs is $\gamma:=h(2)-2h(1)=\PWLSlope_2-\PWLSlope_1>0$.


Every private arc appears on a single route, and every shared arc \(a_e\) belongs to exactly the two routes associated with the endpoints of \(e\). In addition, \(0\le \xi_i\le1\). Thus, no arc load exceeds two, and the specified part of \(h\) between 0 and 2 is sufficient for every possible uncertainty realization.
For a realization \(\xi\), let \(X_a(\xi)\) be the load on arc \(a\). Since $\lambda=1$, the objective value is
\[
F(\xi):=\sum_{a\in\mathcal A} h(X_a(\xi)).
\]
The function \(F\) is convex in \(\xi\) because each arc load is affine in \(\xi\) and \(h\) is convex. Therefore, a maximizer over the polytope \(\Xi\) will be at an extreme point.
Since \(k\) is integral, the extreme points of \(\Xi\) are incidence vectors of subsets \(S\subseteq V\) with \(|S|\le k\). Because all PWL slopes used in the construction are positive, no worst-case realization leaves budget unused: if \(\sum_i \xi_i<k\), then some component \(\xi_v<1\) can be increased while preserving feasibility, which strictly increases the route cost. Hence a maximizing extreme point can be chosen with \(|S|=k\).
Fix such a subset \(S\), and let \(X_a(S)\) denote the induced load on arc \(a\). If all selected routes used private arcs only, the total cost would be \(h(1)R|S|\), because each of the \(|S|\) selected routes contains exactly \(R\) arcs. The actual construction differs from this private arc baseline only on shared arcs \(a_e\). Let $E(S) \subseteq E$ be the set of edges with both endpoints in $S$. If \(e \in E(S)\), then the two route incidences that would contribute \(2h(1)\) in the private arc baseline instead appear as one shared arc with load two and cost \(h(2)\). This adds \(\gamma=h(2)-2h(1)\). If \(e\) has zero or one endpoint in \(S\), there is no such increment. Hence
\[
\sum_{a\in\mathcal A}h(X_a(S))=h(1)R|S|+\gamma |E(S)|.
\]
Since \(|S|=k\), maximizing the scenario value is equivalent to maximizing \(|E(S)|\) over all \(k\)-vertex subsets of \(G\).
Set the validation threshold to
\[
\Theta:=h(1)Rk+\gamma\left(\binom{k}{2}-\frac12\right).
\]
If \(G\) contains a \(k\)-clique, then a $k$-subset $S \subseteq V$ satisfies \(|E(S)|=\binom{k}{2}\), and the corresponding scenario value is strictly greater than \(\Theta\). Thus, the first-stage decisions fail the robust threshold test. Conversely, if \(G\) has no \(k\)-clique, then every \(k\)-subset satisfies \(|E(S)|\le \binom{k}{2}-1 < \Theta\). By the extreme-point argument, this bounds the maximum over all \(\xi\in\Xi\). Therefore, the fixed plan satisfies the robust threshold test {\it if and only if} \(G\) has no \(k\)-clique. Figure~\ref{fig:fixed_plan_clique_example}c gives an example of threshold calculation for 3-CLIQUE on a four-vertex graph.

The construction has \(O(|E|+n\Delta(G))\subseteq O(n^2)\) shared arcs and route-arc incidences and uses rational data of polynomial encoding length. Since the complement of CLIQUE is coNP-complete, as CLIQUE is NP-complete \citep{karp1972reducibility}, fixed-plan robust validation is coNP-hard. 
\end{proof}

\propcomplexitysigmahard*

\begin{proof}
quantified complement form 
 threshold problem for Max-3SAT to the decision version of \PWLCSO{}\ with a bounded polyhedral uncertainty set. The input is a proper 3-Conjunctive Normal Form (3-CNF) formula
\[
F(\mathbf{\chi},\mathbf{\upsilon})=\bigwedge_{k=1}^q d_k
\]
with existential variables \(\mathbf{\chi}=(\chi_1,\ldots,\chi_n)\), universal variables
\(\upsilon=(\upsilon_1,\ldots,\upsilon_M)\), and an integer threshold \(B\). Each clause $d_k=(\ell_{k1}\vee \ell_{k2}\vee \ell_{k3})$ has exactly three literals corresponding to distinct variables or their negations. The question is
whether
\[
\exists \chi\in\{0,1\}^n:
\operatorname{sat}_F(\chi,\upsilon)\le B, \forall \upsilon \in\{0,1\}^M
\]
where \(\operatorname{sat}_F(\chi,\upsilon)\) is the number of satisfied
clauses in \(F\). Without loss of generality, we assume \(M\ge1\). 
\rmb{This threshold problem is \(\Sigma^p_2\)-complete: membership follows from its \(\exists\forall\) certificate structure. For hardness, it suffices to consider the special case \(B=q-1\): then \(\operatorname{sat}_F(\boldsymbol\chi, \boldsymbol\upsilon)\le q-1\) is equivalent to falsifying the 3-CNF formula \(F\), so this special case is the complement of the standard \(\forall\exists\)-3SAT problem, which is \(\Pi^p_2\)-complete~\citep{schaefer2002completeness}.}

The reduction follows the timing of \PWLCSO{}: the first-stage shelter opening
decisions encode \(\chi\), the uncertainty realization encodes
\(\upsilon\), and the PWL evacuation time counts the number of satisfied clauses.

\emph{Step 1:} For each existential variable \(\chi_i\), create one origin \(o_i^{\chi}\), and two candidate shelters \(j_i^+\) and \(j_i^-\). Origin \(o_i^\chi\) has two designated routes: route \(r_{i,+}\) to \(j_i^+\) and route \(r_{i,-}\) to \(j_i^-\).

For each universal variable \(\upsilon_m\), create two origins \(o_m^{\upsilon+}\) and
\(o_m^{\upsilon-}\), each with a single designated route to shelter \(j_U\), called the universal shelter. Let
\[
\mathcal O = \{o_1^{\chi},\ldots,o_n^{\chi},o_1^{\upsilon+},\dots, o_M^{\upsilon+}, o_1^{\upsilon-},\ldots, o_M^{\upsilon-}\}, \ \mathcal J= \mb{\mathcal L}=
\{j_U\}\cup\{j_1^+,\hdots,j_n^+,j_1^-,\hdots,j_n^-\},
\text{ and }
N=n+1.
\]
The origins created for universal variables have routes only to \(j_U\). Therefore,
constraints~\eqref{eq:route_assign_model1}--\eqref{eq:route_open_shelter_model1}
force \(b_{j_U}=1\). Since exactly $N=n+1$ shelters are opened,  \(n\)
shelters among the \(2n\) shelters \(j_i^+,j_i^-\) are open. Furthermore, from~\eqref{eq:route_assign_model1}--\eqref{eq:route_open_shelter_model1}, each origin \(o_i^\chi\) must assign evacuees to 
open \(j_i^+\), \(j_i^-\) shelters. Since \(n\)
shelters among the \(2n\) shelters \(j_i^+,j_i^-\) are open, exactly one shelter in each pair \(\{j_i^+,j_i^-\}\) is open. We interpret
\[
b_{j_i^+}=1 \rightarrow \chi_i=1,
\qquad
b_{j_i^-}=1 \rightarrow \chi_i=0.
\]

\emph{Step 2:} Define the uncertainty set
\[
\Xi=
\left\{
\xi:
\xi_{o_i^\chi}=1,\  i=1,\ldots,n,\quad
\xi_{o_m^{\upsilon+}}+\xi_{o_m^{\upsilon-}}=1, \ \xi_{o_m^{\upsilon+}},\xi_{o_m^{\upsilon-}}\ge 0,\  m=1,\ldots,M
\right\}.
\]
This is a rational polytope. We interpret its extreme points as universal variable \(\upsilon\in\{0,1\}^M\)
assignments, that is
\[
 \xi_{o_m^{\upsilon+}}=1 \rightarrow \upsilon_m=1,
\qquad
\xi_{o_m^{\upsilon-}}=1 \rightarrow\upsilon_m=0.
\]
Note that, for every \(\xi\in\Xi\), total evacuee count is constant:
\[
\sum_{i=1}^n \xi_{o_i^\chi}
+
\sum_{m=1}^M
\left(\xi_{o_m^{\upsilon+}}+\xi_{o_m^{\upsilon-}}\right)
=
n+M.
\]

\emph{Step 3:}
For each clause
$d_k=(\ell_{k1}\vee \ell_{k2}\vee \ell_{k3})$
 consider seven sign variant clauses
\[
\mathcal G_k
=
\left\{
(m_1\vee m_2\vee m_3):
m_h\in\{\ell_{kh},\overline{\ell}_{kh}\},\ h=1,2,3
\right\}
\setminus \{d_k\},
\]
where \(\overline{\ell}\) denotes the complementary signed literal.                                                     
For example, if $d_1=(x_1\vee \neg y_1\vee y_2)$, then seven variant clauses of $d_1$ are $\mathcal G_1=\{(\neg x_1\vee \neg y_1\vee y_2),(\neg x_1\vee y_1\vee y_2), (\neg x_1\vee \neg y_1\vee \neg y_2),(\neg x_1\vee y_1\vee \neg y_2),(x_1\vee y_1\vee y_2), (x_1\vee \neg y_1\vee \neg y_2),(x_1\vee y_1\vee \neg y_2)
\}$.

For each clause
$d_k$, create one shared arc
\(a_{kg}\) for every variant clause \(g\in\mathcal G_k\). This arc is used by the three routes corresponding to the three
literals of \(g\) being false:
\[
\begin{array}{c|c|c}
\text{Literal in }g &\PWLCSO{}\ \text{variable corresponding to that literal being false} & \text{Route that includes }a_\rmb{kg}\\
\hline
\chi_i & b_{j_i^-}=1 &r_{i,-}\\
\neg \chi_i & b_{j_i^+}=1& r_{i,+}\\
\upsilon_m & \xi_{o_m^{\upsilon-}}=1 &\text{route from }o_m^{\upsilon-}\text{ to }j_U\\
\neg \upsilon_m & \xi_{o_m^{\upsilon+}}=1 &\text{route from }o_m^{\upsilon+}\text{ to }j_U.
\end{array}
\]
All other arcs on considered routes are route-private. Add route-private dummy arcs as needed so that every route contains the same number of arcs\mb{, i.e., \(d_r=R\)}. \mb{Set the route-acceptability threshold parameter \(\hat d_{ok}=R\) for every origin \(o\in\mathcal O\) and candidate shelter \(k\in\mathcal J\).} 
Then constraint~\eqref{eq:route_restict_model1} does not prohibit any designated route because route lengths are all equal. Route feasibility is determined solely by the shelter-opening decisions.

For a solution of the created \PWLCSO{}\ instance corresponding to Boolean assignment \((\chi,\upsilon)\), the traffic load on shared arc
\(a_\rmb{kg}\) is
\[
x_{a_\rmb{kg}}(\chi,\upsilon)
=
\#\{\text{literals of }g\text{ that are false under }(\chi,\upsilon)\}.
\]
Thus \(x_{a_{kg}}=3\) if and only if the variant clause \(g\) is false.

For clause \(d_k\), exactly one of the
eight sign variants of \(d_k\) is false under any Boolean assignment. If \(d_k\) is false, then no generated clause in
\(\mathcal G_k\) is false. If \(d_k\) is true, then the unique false sign variant
belongs to \(\mathcal G_k\), so exactly one generated clause in \(\mathcal G_k\) is
false. Therefore,
\[
\sum_{g\in\mathcal G_k}
\mathbf 1\{x_{a_{kg}}=3\}
=
\mathbf 1\{d_k\text{ is satisfied}\}.
\]

\emph{Step 4:} Set \(\lambda=1\).
To ensure that unmet demand does not affect the threshold calculation for the intended first-stage plans, set \(m=1\), \(S=1\), \(K=n+M\), and \(\mathcal L_j=\{j_U\}\) for every shelter \(j\in\mathcal J\). A feasible first-stage plan can place the single trailer at \(j_U\). Under any shelter-opening pattern induced by Step 1, each open existential shelter receives exactly one unit of demand, and \(j_U\) receives exactly \(M\) units because \(\xi_{o_m^{\upsilon+}}+\xi_{o_m^{\upsilon-}}=1\). Thus, the integer recourse can ship one pallet to each open existential shelter and \(M\) pallets to \(j_U\), using exactly \(n+M\) pallets, and can set \(u_j(\xi)=0\) for all \(j\) and \(\xi\). For any feasible first-stage plan without such relief placement, unmet demand is nonnegative and can only increase the objective.

\emph{Step 5:} Set \(c_a=1\) and \(t_a^0=1\) for every constructed arc \(a\). In the PWL approximation of the strictly convex $g(\rho)$ function, denoted by $g_{\texttt{PWL}}(\rho)$, let \(\rho_0=0<\rho_1<\rho_2\) be the  breakpoints, such that $2\le \rho_1<3$ and $3\le \rho_2$. The slope of the first segment $\PWLSlope_1 = \frac{g(\rho_1)}{\rho_1} < \PWLSlope_2 = \frac{g(\rho_2)-g(\rho_1)}{\rho_2 - \rho_1}$.

Let \(h_a(x)=t_a^0c_ag_{\texttt{PWL}}(\rho_a)\) be the PWL approximate travel time for an arc with load \(x\). By construction, $\rho_a = \frac{x}{c_a}=x$ and $h_a(x)=g_{\texttt{PWL}}(x)$. Due to the segment-filling property of the PWL formulation, loads \(x=0,1,2\) use only the first segment of the approximation, while load \(3\) uses a positive amount of the second segment. Hence, $h(0)=0,
 h(1)=\PWLSlope_1,
 h(2)=2\PWLSlope_1,
 h(3)=3\PWLSlope_1+\delta$,
where
$\delta=(\PWLSlope_2-\PWLSlope_1)(3-\rho_1)>0$.

No arc load in the construction exceeds three: each shared arc is used by at most the three routes associated with its three literals, and each route-private arc is used by only one route. Therefore, for a feasible solution of the created \PWLCSO{} instance corresponding to Boolean assignment \((\chi,\upsilon)\), $h(x_{a_\rmb{kg}})=\PWLSlope_1 x_{a_\rmb{kg}}+\delta\,\mathbf 1\{x_{a_\rmb{kg}}=3\}$
for every shared arc $a_\rmb{kg}$, while for route-private arcs $h(x_a) = \PWLSlope_1 x_a$ as their load $x_a \in \{0,1\}$.

Every designated route contains exactly \(R\) arcs in the created \PWLCSO{} instance, and total demand is equal to \(n+M\) for all evacuee count realization $\xi\in\Xi$. Thus
\[
\sum_{a\in\mathcal A}x_a(\xi)=R(n+M)
\qquad
\forall \xi\in\Xi,
\]
so the first-segment contribution \(\PWLSlope_1R(n+M)\) is constant. At binary extreme points, the only solution-dependent portion is
\[
\delta
\sum_{k=1}^q
\sum_{g\in\mathcal G_k}
\mathbf 1\{\rmb{x}_{a_\rmb{kg}}=3\}.
\]
By Step 3, this expression equals \(\delta\,\operatorname{sat}_F(\chi,\upsilon)\).


\emph{Step 6:}
Set the threshold $\eta=\PWLSlope_1R(n+M)+\delta B$ for the decision version of the constructed \PWLCSO{}\ instance.

\emph{Correctness.}
\rmb{If the source instance is a yes-instance, choose the shelter openings corresponding to a witnessing assignment \(\chi\), open \(j_U\), assign each origin to its designated route to an open shelter, and place the single trailer at \(j_U\). By Step 4, the adaptive recourse can set \(u_j(\xi)=0\) for all \(j\) and all \(\xi\in\Xi\). For this fixed plan, the PWL evacuation time as a function of \(\xi\) is convex: each arc load is affine in \(\xi\), and the segment-flow PWL value function \(h\) is convex. The uncertainty set \(\Xi\) is a polytope. Therefore, a worst-case realization for this plan can be chosen at an extreme point of \(\Xi\), which corresponds to a Boolean assignment \(\upsilon\in\{0,1\}^M\) by Step 2. At these extreme points, Steps 3 and 5 give the objective value}
\(
\rmb{\PWLSlope_1R(n+M)
+
\delta
\operatorname{sat}_F(\chi,\upsilon).}
\)
\rmb{Since \(\operatorname{sat}_F(\chi,\upsilon)\le B\) for all \(\upsilon\in\{0,1\}^M\), the worst-case objective value of this plan is at most}
\(
\rmb{\PWLSlope_1R(n+M)+\delta B
=
\eta.}
\)
\rmb{Thus the constructed \PWLCSO{} instance is a yes-instance.}

\rmb{Conversely, suppose the constructed \PWLCSO{} instance is a yes-instance, and let \((b,s^1,y)\) be a feasible first-stage plan with worst-case objective value at most \(\eta\). By Step 1, this plan}
encodes a unique assignment
\(\chi\in\{0,1\}^n\). 
\rmb{For every 
assignment \(\upsilon\in\{0,1\}^M\), consider the corresponding extreme point of \(\Xi\). The total objective at this realization is at most \(\eta\). Since unmet demand is nonnegative, the PWL evacuation-time component is also at most \(\eta\). By Steps 3 and 5,}
\[
\PWLSlope_1R(n+M)
+
\delta
\operatorname{sat}_F(\chi,\upsilon)
\le
\PWLSlope_1R(n+M)+\delta B
\quad
\forall \upsilon\in\{0,1\}^M.
\]
Since \(\delta>0\), this is equivalent to
$\exists \chi\in\{0,1\}^n :
\operatorname{sat}_F(\chi,\upsilon)\le B \quad \forall \upsilon \in\{0,1\}^M$,
which is the source problem.

The construction creates \(n+2M\) origins, \(2n+1\) shelters and trailer locations,
\(7q\) shared arcs, and a polynomial number of route-private arcs. 
Hence,
this is a polynomial-time many-one reduction from a \(\Sigma^p_2\)-complete problem
to the decision version of \PWLCSO{}. Therefore, \PWLCSO{} is
\(\Sigma^p_2\)-hard. 

\end{proof}

The following example illustrates the reduction described above.

\begin{example}
\color{black}
Consider the source instance with two existential variables, one universal variable, two clauses, and threshold \(B=1\): $F(\chi,\upsilon)=d_1\wedge d_2,
\
 d_1=(\chi_1\vee \chi_2\vee \upsilon_1),
\
 d_2=(\chi_1\vee \chi_2\vee \neg \upsilon_1)$.

The source question asks whether there exists an assignment of \(\chi\) such that at most one clause is satisfied for every value of \(\upsilon_1\). The assignment \(\chi_1=\chi_2=0\) satisfies this condition: if \(\upsilon_1=1\), then only \(d_1\) is satisfied, while if \(\upsilon_1=0\), then only \(d_2\) is satisfied. Hence \(\operatorname{sat}_F(\chi,\upsilon)=1\le B\) for both universal assignments.

\emph{Step 1:}
The constructed \PWLCSO{} instance has origins
\(
\mathcal O=\{o_1^\chi,o_2^\chi,o_1^{\upsilon+},o_1^{\upsilon-}\},
\)
shelters and trailer locations
\(
\mathcal J=\mathcal L=\{j_U,j_1^+,j_2^+,j_1^-,j_2^-\}
\), and \(N=3\).
Opening \(j_i^+\) encodes \(\chi_i=1\), and opening \(j_i^-\) encodes \(\chi_i=0\). Origin \(o_i^\chi\) has two designated routes, \(r_{i,+}\) to \(j_i^+\) and \(r_{i,-}\) to \(j_i^-\). For the universal variable, let \(r_{U,+}\) denote the route from \(o_1^{\upsilon+}\) to \(j_U\), and let \(r_{U,-}\) denote the route from \(o_1^{\upsilon-}\) to \(j_U\). The assignment \(\chi_1=\chi_2=0\) is represented by opening \(j_U\), \(j_1^-\), and \(j_2^-\); the existential routes carrying demand are then \(r_{1,-}\) and \(r_{2,-}\).

\emph{Step 2:}
The uncertainty set is
\(
\Xi=\{\xi:\xi_{o_1^\chi}=\xi_{o_2^\chi}=1,\;\xi_{o_1^{\upsilon+}}+\xi_{o_1^{\upsilon-}}=1,\;\xi_{o_1^{\upsilon+}},\xi_{o_1^{\upsilon-}}\ge0\}.
\)
Its two extreme points encode the two values of the universal variable: \(\upsilon_1=1\) corresponds to \(\xi_{o_1^{\upsilon+}}=1\), and \(\upsilon_1=0\) corresponds to \(\xi_{o_1^{\upsilon-}}=1\). Thus \(r_{U,+}\) carries one unit of demand when \(\upsilon_1=1\), whereas \(r_{U,-}\) carries one unit of demand when \(\upsilon_1=0\).

\emph{Step 3:}
For each source clause \(d_k\), the constructed road network contains seven shared arcs \(a_{k,g}\), one for each sign variant \(g\in\mathcal G_k\). The index \(g\) denotes a sign variant of \(d_k\) other than \(d_k\) itself. Each arc \(a_{k,g}\) is placed on three designated routes, determined by the literals of \(g\): for each literal in \(g\), the route representing the assignment that makes that literal false contains \(a_{k,g}\). Consequently, \(a_{k,g}\) attains load three exactly when all three literals of \(g\) are false under the opened shelters and the realized universal demand.

In this example, the construction creates seven shared arcs for \(d_1\) and seven shared arcs for \(d_2\). The following table reports two of these fourteen arcs, namely the arcs that attain load three under the assignment \(\chi_1=\chi_2=0\) for the two possible values of \(\upsilon_1\).
\medskip

\begin{center}
\resizebox{0.56\textwidth}{!}{
\begin{tabular}{c c c}
\toprule
Source clause & Sign variant \(g\) & Routes containing \(a_{k,g}\)\\
\midrule
\(d_1\) & \(g_1=(\chi_1\vee\chi_2\vee\neg\upsilon_1)\in\mathcal G_1\) & \(r_{1,-},\,r_{2,-},\,r_{U,+}\)\\
\(d_2\) & \(g_2=(\chi_1\vee\chi_2\vee\upsilon_1)\in\mathcal G_2\) & \(r_{1,-},\,r_{2,-},\,r_{U,-}\)\\
\bottomrule
\end{tabular}
}
\end{center}

\medskip
The first row follows because route \(r_{1,-}\) represents \(\chi_1=0\), route \(r_{2,-}\) represents \(\chi_2=0\), and route \(r_{U,+}\) carries demand when \(\upsilon_1=1\), which makes \(\neg\upsilon_1\) false. Hence, when \((\chi_1,\chi_2,\upsilon_1)=(0,0,1)\), all three routes containing \(a_{1,g_1}\) carry one unit of demand, so \(a_{1,g_1}\) has load three. For clause \(d_2\), the unique sign variant that is false at \((0,0,1)\) is \(d_2\) itself, which is excluded from \(\mathcal G_2\); therefore, no generated arc associated with \(d_2\) has load three in this realization. The case \(\upsilon_1=0\) is symmetric: \(a_{2,g_2}\) has load three and no generated arc associated with \(d_1\) has load three. So, for this assignment, each satisfied source clause contributes exactly one generated load-three arc, whereas a false source clause contributes none.

\emph{Step 4:}
With \(m=1\), \(S=1\), \(K=3\), and \(\mathcal L_j=\{j_U\}\) for all \(j\), placing the single trailer at \(j_U\) provides three pallets, thus the total demand \(n+M=3\). So, all unmet demand variables can be set to zero for both uncertainty extreme points.

\emph{Step 5:}
For the PWL data, take \(g(\rho)=\rho+\rho^5\), \(c_a=t_a^0=1\), \(\rho_1=2\), and \(\rho_2=3\). Then
\(
\PWLSlope_1=g(2)/2=17, 
\PWLSlope_2=g(3)-g(2)=212, 
\delta=(212-17)(3-2)=195.
\)
Select a large enough route length \(R>0\), and pad all designated routes with private dummy arcs as needed. The constant first-segment contribution is $\PWLSlope_1R(n+M)=17\cdot R\cdot3=51R$, and the decision threshold from Step 6 is $\eta=51R+195$.
The objective values for the first-stage plan encoding \(\chi_1=\chi_2=0\) are
\medskip

\begin{center}
\resizebox{0.75\textwidth}{!}{
\begin{tabular}{c c c c}
\toprule
Universal assignment & Satisfied source clauses & Generated load-three arc & Objective value\\
\midrule
\(\upsilon_1=1\) & \(d_1\) & \(a_{1,g_1}\) & \(51R+195\)\\
\(\upsilon_1=0\) & \(d_2\) & \(a_{2,g_2}\) & \(51R+195\)\\
\bottomrule
\end{tabular}
}
\end{center}
\medskip

Thus, the constructed plan has worst-case objective value \(51R+195=\eta\). In contrast, if the shelter-opening pattern encodes, for example, \(\chi_1=1\), then both clauses are satisfied for both values of \(\upsilon_1\). In that case, two generated arcs attain load three, and the PWL component is \(51R+2\cdot195>\eta\). The threshold, therefore, separates assignments satisfying \(\max_{\upsilon}\operatorname{sat}_F(\chi,\upsilon)\le B\) from assignments that violate this condition.
\end{example}

\propcomplexitysigmamember*

{\color{black}
\begin{proof}
The existential certificate is a candidate first-stage plan \((b,s^1,y)\). The verifier checks in polynomial time that this plan satisfies the deterministic first-stage constraints. The universal certificate is a candidate realization \(\xi\in\Xi\).

For a fixed feasible first-stage plan and a fixed admissible realization \(\xi\), the realized arc loads and the PWL travel-time contribution are computable in polynomial time. The optimal unmet demand recourse value is also computable in polynomial time by Proposition~\ref{prop:complexity-recourse}. Hence, the verifier can evaluate the realized optimal value \(V(b,s^1,y,\xi)\) and check \(V(b,s^1,y,\xi)\le \eta\) in polynomial time. 

Thus, the decision problem admits an \(\exists\forall\) certificate structure with a polynomial-time predicate:
\(
\exists (b,s^1,y)\;\forall \xi\in\Xi:\quad V(b,s^1,y,\xi)\le \eta,
\). This is the defining characterization of the complexity class
\(\Sigma^p_2\).
Therefore, the decision version of \(\PWLCSO{}\) with a bounded polyhedral uncertainty set belongs to \(\Sigma^p_2\). 
\end{proof}
}



\section{Subproblems for User Route Choice Model}\label{app:subprob_choice_model}
Given a set of critical scenarios $\mathcal{P}=\{\hat{\xi}_p\}_{p \in P} \subset \Xi$, with $|\mathcal{P}|<\infty$, we present the upper- and lower-bounding subproblems for the 
user route choice \rmb{(\PWLURC) model obtained by applying the piecewise-linear approximation in Section~\ref{ttf} to model~\eqref{prefmodel}.} \rmb{The route-choice constraints \eqref{eq:route_assign_model2} and~\eqref{choice-model-const_} are implemented through the lifted McCormick reformulation described after Proposition~\ref{projection}.}
\subsubsection*{The Upper-Bounding Subproblem.}
The upper-bounding formulation is written as follows:
\begin{subequations}\label{prefmodel_UB}
\begin{align}
z_{\PrefUB}(\mathcal{P}) = \min \ & z \label{obj-function_model2_UB} \hspace{-1.3cm} &  \\
\text{s.t.} \ & \eqref{eq:tot_open_shelter_model1}, \eqref{eq:flexible_trailer_capa_model1}, \eqref{eq:route_select_model2}-\eqref{choice-model-const_}, \eqref{eq:non_neg_model22}, \nonumber\\
& \sum_{j \in \rmb{\mathcal{J}}} u_{jp} + \lambda \sum_{a \in \mathcal{A}} \rmb{\sum_{\PWLIdx \in \PWLSet}t^0_a \PWLSlope_{\PWLIdx}x^{\PWLIdx}_{ap}} \leq z \label{eq:obj_prefmodel_UB} & p \in P,\\
\label{eq:unmet_model2_UB} &   \sum_{i \in \mathcal{O}} \sum_{r \in \mathcal{R}_{ij}} \xi_{i} \pi_{r} \leq u_{jp} + m \sum_{\ell \in \rmb{\mathcal{L}_j}} s^{2}_{\ell jp}  & j \in \rmb{\mathcal{J}}, p \in P, \xi \in \Xi(\hat{\xi}_p),\\
\label{eq:arc_capa_model2_UB} &   \sum_{i \in \mathcal{O}} \sum_{j \in \rmb{\mathcal{J}}} \sum_{r \in \mathcal{R}_{ij}:a \in r} \xi_{i}\pi_{r} \leq \rmb{\sum_{\PWLIdx \in \PWLSet}x^{\PWLIdx}_{ap}} & a \in \mathcal{A}, p \in P, \xi \in \Xi(\hat{\xi}_p),\\
& \rmb{x^{\PWLIdx}_{ap} \leq c_a\PWLDelta_{\PWLIdx}} & \rmb{\PWLIdx \in \PWLSet\setminus\{\PWLCount\}, a \in \mathcal{A}, p \in P,}\\
&\label{eq:non_neg_model2_UB} s^{2}_{\ell jp} \in \mathbb{Z}_{+}, u_{jp}, \rmb{x^{\PWLIdx}_{ap}} \geq 0 & \hspace{-0.5cm} j \in \rmb{\mathcal{J}}, \ell \in \rmb{\mathcal{L}}, a \in \mathcal{A}, p \in P, \rmb{\PWLIdx \in \PWLSet}.
\end{align}
\end{subequations}

\subsubsection*{The Lower-Bounding Subproblem.}
The lower-bounding formulation is written as follows:
\begin{subequations}\label{prefmodel_LB}
\begin{align}
z_{\PrefLB}(\mathcal{P}) = \min \ & z &  \\
\text{s.t.} \  &\eqref{eq:tot_open_shelter_model1}, \eqref{eq:flexible_trailer_capa_model1}, \eqref{eq:route_select_model2}-\eqref{choice-model-const_}, \eqref{eq:non_neg_model22}, \nonumber\\
& \sum_{j \in \rmb{\mathcal{J}}} u_{jp} + \lambda \sum_{a \in \mathcal{A}} \rmb{\sum_{\PWLIdx \in \PWLSet}t^0_a \PWLSlope_{\PWLIdx}x^{\PWLIdx}_{ap}} \leq z \label{obj-function_model2_LB} & p \in P,\\
\label{eq:unmet_model2_LB} &   \sum_{i \in \mathcal{O}} \sum_{r \in \mathcal{R}_{ij}} \hat{\xi}_{pi} \pi_{r} \leq u_{jp} + m \sum_{\ell \in \rmb{\mathcal{L}_j}} s^{2}_{\ell jp}  & j \in \rmb{\mathcal{J}}, p \in P,\\
\label{eq:arc_capa_model2_LB} &   \sum_{i \in \mathcal{O}} \sum_{j \in \rmb{\mathcal{J}}} \sum_{r \in \mathcal{R}_{ij}:a \in r} \hat{\xi}_{pi}\pi_{r} \leq \rmb{\sum_{\PWLIdx \in \PWLSet}x^{\PWLIdx}_{ap}} & a \in \mathcal{A}, p \in P,\\
& \rmb{x^{\PWLIdx}_{ap} \leq c_a\PWLDelta_{\PWLIdx}} & \rmb{\PWLIdx \in \PWLSet\setminus\{\PWLCount\}, a \in \mathcal{A}, p \in P,}\\
&\label{eq:non_neg_model2_LB} s^{2}_{\ell jp} \in \mathbb{Z}_{+}, u_{jp}, \rmb{x^{\PWLIdx}_{ap}} \geq 0 & \hspace{-0.5cm} j \in \rmb{\mathcal{J}}, \ell \in \rmb{\mathcal{L}}, a \in \mathcal{A}, p \in P, \rmb{\PWLIdx \in \PWLSet}.
\end{align}
\end{subequations}

\section{\rmb{Reformulation of Semi-Infinite Upper-Bound Constraints}} \label{app:reformulation}
\rmb{This appendix derives the LP-duality reformulation used for the semi-infinite upper-bound constraints in UB-ARMIO under the continuous budgeted uncertainty set in the case study. The unmet-demand constraints~\eqref{eq:unmet_model_UB} and the traffic-volume constraints~\eqref{eq:arc_capa_model1_pwl_UB} have the same support function structure because the dependence on the uncertainty vector \(\xi\) is affine in both constraint families. As described in Section~\ref{selectactive}, each child leaf is obtained by adding one linear bisector inequality to the description of its parent leaf. Hence, for a current leaf \(p\), let \(H_p\) denote the finite set of linear split inequalities inherited along the path from the root to that leaf, and write}
\[
\Xi(\hat{\xi}_p)
=
\left\{
\xi\in\Xi:
\sum_{i\in\mathcal O}g_{hpi}\xi_i\le d_{hp},\ h\in H_p
\right\},
\]
\rmb{Here, \(H_p=\emptyset\) for the root leaf. Each time a leaf is split using two scenarios \(\hat{\xi}_{p_1}\) and \(\hat{\xi}_{p_2}\), the child \(\Xi(\hat{\xi}_{p_1})\) inherits the linear inequality}
\(
2(\hat{\xi}_{p_2}-\hat{\xi}_{p_1})^\top\xi\le \|\hat{\xi}_{p_2}\|_2^2-\|\hat{\xi}_{p_1}\|_2^2,
\)
\rmb{and the child \(\Xi(\hat{\xi}_{p_2})\) inherits the same inequality with both sides multiplied by \(-1\). Thus, for any current leaf \(p\), \(H_p\) is the set of split inequalities accumulated along its path from the root, written generically as \(\sum_{i\in\mathcal O}g_{hpi}\xi_i\le d_{hp}\), \(h\in H_p\).}

\rmb{Both robust constraint families can be written in the generic form}
\begin{align}
\label{eq:generic_support_UB}
\max_{\xi\in\Xi(\hat{\xi}_p)}\sum_{i\in\mathcal O}\psi_i\xi_i \le \vartheta.
\end{align}
\rmb{For the unmet-demand constraint~\eqref{eq:unmet_model_UB}, the substitution is}
$\psi_i = \sum_{r\in\mathcal R_{ij}}y_r$ for $i\in\mathcal O$ and $\vartheta = u_{jp}+m\sum_{\ell\in\mathcal L_j}s^{2}_{\ell jp}$ for $j\in\mathcal J,\ p\in P.$
\rmb{For the traffic-volume constraint~\eqref{eq:arc_capa_model1_pwl_UB}, the substitution is}
$\psi_i = \sum_{j\in\mathcal J}\sum_{r\in\mathcal R_{ij}:a\in r}y_r$ for $i\in\mathcal O$ and $\vartheta = \sum_{\PWLIdx\in\PWLSet}x^{\PWLIdx}_{ap}$ for $a\in\mathcal A,\ p\in P.$
Thus, in both cases, the coefficient vector of the support function is determined by the first-stage route-assignment variables. For the budgeted uncertainty set used in the case study, and for \(\epsilon>0\), the support-function problem in~\eqref{eq:generic_support_UB} can be written as
\[
\begin{aligned}
\max_{\xi}\quad & \sum_{i\in\mathcal O}\psi_i\xi_i\\
\text{s.t.}\quad
& \bar w_i(1-\epsilon)\le \xi_i\le \bar w_i(1+\epsilon), && i\in\mathcal O,\\
& \sum_{i\in\mathcal O}\left|\frac{\xi_i-\bar w_i}{\bar w_i\epsilon}\right|\le \Gamma,\\
& \sum_{i\in\mathcal O}g_{hpi}\xi_i\le d_{hp}, && h\in H_p.
\end{aligned}
\]
\rmb{Introducing auxiliary variables \(q_i\) to linearize the absolute-value budget yields the following LP:}
\begin{subequations}
\begin{align}
     \label{eq:inner_max} \text{max } & \sum_{i \in \mathcal{O}} \psi_i\xi_i\\
     \text{s.t.} \  & \xi_i \leq \bar{w}_{i}(1+\epsilon), \quad -\xi_i \leq -\bar{w}_{i}(1-\epsilon), & i \in \mathcal{O}\\
     & \xi_i - \bar{w}_{i}\epsilon q_i \leq \bar{w}_{i}, \quad -\xi_i - \bar{w}_{i}\epsilon q_i \leq -\bar{w}_{i}, \quad  q_i \geq 0, & i \in \mathcal{O}\\
     \label{eq:inner_max_fin} & \sum_{i \in \mathcal{O}} q_i \leq \Gamma, \qquad \sum_{i\in\mathcal O}g_{hpi}\xi_i\le d_{hp}, \ h\in H_p.
\end{align}
\end{subequations}
\rmb{The dual of~\eqref{eq:inner_max}--\eqref{eq:inner_max_fin} is}
\begin{subequations}
\begin{align}
    \label{eq:inner_dual} \text{min } & \sum_{i\in\mathcal O}\left(\alpha_i\bar w_i(1+\epsilon)-\beta_i\bar w_i(1-\epsilon)+\gamma_i\bar w_i-\delta_i\bar w_i\right)
    +\mu\Gamma +\sum_{h\in H_p}\kappa_h d_{hp}\\
    \label{eq:inner_dual_const} \text{s.t.  } & \alpha_i-\beta_i+\gamma_i-\delta_i+\sum_{h\in H_p}g_{hpi}\kappa_h=\psi_i & i\in\mathcal O\\
    & \bar w_i\epsilon(\gamma_i+\delta_i)\leq \mu, \quad \alpha_i,\beta_i,\gamma_i,\delta_i\geq0 & i\in\mathcal O\\
   \label{eq:inner_dual_fin} & \mu\geq0, \qquad \kappa_h\geq0,\ h\in H_p.
\end{align}
\end{subequations}
\rmb{By LP strong duality, the generic robust inequality~\eqref{eq:generic_support_UB} is enforced by introducing dual variables satisfying~\eqref{eq:inner_dual_const}--\eqref{eq:inner_dual_fin} and requiring the dual objective~\eqref{eq:inner_dual} to be no larger than \(\vartheta\).} By applying the robust reformulation to constraints~\eqref{eq:unmet_model_UB} and \eqref{eq:arc_capa_model1_pwl_UB}, we obtain constraints~\eqref{eq:RC_unmet_demand_1}--\eqref{eq:RC_unmet_demand_5} and \eqref{eq:RC_evac_time_1}--\eqref{eq:RC_evac_time_5}, respectively. The resulting primal robust counterpart formulation of \CSO{} is given by:
\begin{subequations}\label{primal_model_RC}
\begin{align}
\min \ & z \\
\text{s.t.} \ & \eqref{eq:tot_open_shelter_model1}-\eqref{eq:route_restict_model1}, \eqref{UB-obj-function}, \rmb{\eqref{eq:pwl_segment_bound_UB}}-\eqref{eq:non_neg_model1_UB_2} \notag\\
& \sum_{i \in \mathcal{O}} \left(\alpha'_{ijp} \bar{w}_{i} (1+\epsilon) 
    - \beta'_{ijp} \bar{w}_{i} (1-\epsilon) + \gamma'_{ijp} \bar{w}_{i} 
    - \delta'_{ijp} \bar{w}_{i}\right) \notag \\
    & \quad + \rmb{\sum_{h \in H_p} \kappa'_{jph} d_{hp}} + \mu'_{jp} \Gamma - u_{jp} \leq m \sum_{\ell \in \rmb{\mathcal{L}_j}} s^{2}_{\ell jp} & j \in \rmb{\mathcal{J}}, p \in P, \label{eq:RC_unmet_demand_1} \\
    & \alpha'_{ijp} - \beta'_{ijp} + \gamma'_{ijp} - \delta'_{ijp} + \rmb{\sum_{h \in H_p} g_{hpi}\kappa'_{jph}} = \sum_{r \in \mathcal{R}_{ij}} y_{r} & i \in \mathcal{O}, j \in \rmb{\mathcal{J}}, p \in P, \label{eq:RC_unmet_demand_2} \\
    & \bar{w}_{i}\epsilon \gamma'_{ijp} + \bar{w}_{i}\epsilon \delta'_{ijp} \leq \mu'_{jp} & i \in \mathcal{O}, j \in \rmb{\mathcal{J}}, p \in P, \label{eq:RC_unmet_demand_3}\\
    & \alpha'_{ijp}, \beta'_{ijp}, \gamma'_{ijp}, \delta'_{ijp}, \mu'_{jp} \geq 0 & i \in \mathcal{O}, j \in \rmb{\mathcal{J}}, p \in P, \label{eq:RC_unmet_demand_4}\\
    & \rmb{\kappa'_{jph} \geq 0} & j \in \rmb{\mathcal{J}}, p \in P, \rmb{h\in H_p,} \label{eq:RC_unmet_demand_5} \\
    & \sum_{i \in \mathcal{O}} \left(\alpha_{iap} \bar{w}_{i} (1+\epsilon) 
     - \beta_{iap} \bar{w}_{i} (1-\epsilon) + \gamma_{iap} \bar{w}_{i} 
     - \delta_{iap} \bar{w}_{i}\right) \notag \\
     & \quad + \rmb{\sum_{h \in H_p} \kappa_{aph}d_{hp}} + \mu_{ap} \Gamma \leq \rmb{\sum_{\PWLIdx\in\PWLSet}x^{\PWLIdx}_{ap}} & a \in \mathcal{A}, p \in P, \label{eq:RC_evac_time_1}\\
     & \alpha_{iap} - \beta_{iap} + \gamma_{iap} - \delta_{iap} + \rmb{\sum_{h \in H_p} g_{hpi}\kappa_{aph}} = \sum_{j \in \rmb{\mathcal{J}}} \sum_{r \in \mathcal{R}_{ij}:a \in r} y_{r} & i \in \mathcal{O}, a \in \mathcal{A}, p \in P,\label{eq:RC_evac_time_2}\\
    & \bar{w}_{i}\epsilon \gamma_{iap} + \bar{w}_{i}\epsilon \delta_{iap} \leq \mu_{ap} & i \in \mathcal{O}, a \in \mathcal{A}, p \in P,\label{eq:RC_evac_time_3}\\
    & \alpha_{iap}, \beta_{iap}, \gamma_{iap}, \delta_{iap}, \mu_{ap} \geq 0 & i \in \mathcal{O}, a \in \mathcal{A}, p \in P, \label{eq:RC_evac_time_4}\\
    & \label{eq:RC_evac_time_5} \rmb{\kappa_{aph} \geq 0} & a \in \mathcal{A}, p \in P, \rmb{h\in H_p.}
\end{align}
\end{subequations}

\section{Dual-based Critical Scenario Identification}\label{app:dual_based_scenario_identification}
This appendix details the dual-based critical scenario identification procedure in \citet{romeijnders2021piecewise}, which requires a bounded polyhedral uncertainty set. This approach iteratively partitions the uncertainty set into refined subsets $\mathcal{P}_r$. In iteration $r$, the robust counterpart of the finitely adaptable problem~\eqref{zpartmodel} is solved using a branch-and-bound (B\&B) algorithm to obtain $z_{UB}(\mathcal{P}_r)$. Let $\mathcal{N}_r$ denote the nodes of the B\&B tree. Then, a critical cutset $\mathcal{O}_r \subset \mathcal{N}_r$ consists of nodes $n$ such that $(i)$ $z_{LP}^{n} \geq z_{UB}(\mathcal{P}_r)$, and $(ii)$ $\mathcal{O}_r \cap \Pi(\ell) \neq \emptyset$, where $\Pi(\ell)$ represents the path from the root node to the leaf node $\ell$ in the B\&B tree. Critical scenarios in each uncertainty subset are identified using the dual solutions of the nodes in $\mathcal{O}_r$. Finally, the active subset that determines the worst-case objective value after the r-th splitting round is split into two subsets using the bisecant plane between the two critical scenarios that are furthest apart from each other.

\subsection{Scenario Identification from Optimal Dual Solutions}
In iteration $r$, consider a B\&B tree node $n \in \mathcal{O}_r$. We drop the node index $n$ in the rest of this section for clarity. Let $\omega^{(1)}_{jp}, \nu^{(1)}_{ijp}$, and $\eta^{(1)}_{ijp}$ be the dual variables associated with constraints~\eqref{eq:RC_unmet_demand_1}--\eqref{eq:RC_unmet_demand_3}, respectively. For uncertainty subset $p \in P_r$, the dual constraints for  $\alpha'_{ijp}, \beta'_{ijp}, \gamma'_{ijp}, \delta'_{ijp}, \mu'_{jp}, \kappa'_{jph}$ variables are given by:
\begin{subequations}
    \begin{align}
        &\bar{w}_{i}(1-\epsilon)\omega^{(1)}_{jp} \leq \nu^{(1)}_{ijp} \leq \bar{w}_{i}(1+\epsilon)\omega^{(1)}_{jp} & i \in \mathcal{O}, \label{eq:dual_rec_box}\\
        &\bar{w}_{i}\epsilon \eta^{(1)}_{ijp} \geq |\nu^{(1)}_{ijp}-\bar{w}_{i}\omega^{(1)}_{jp}| & i \in \mathcal{O}, \label{eq:dual_rec_abs}\\
        &\sum_{i \in \mathcal{O}} \eta^{(1)}_{ijp} \leq \omega^{(1)}_{jp} \Gamma, \label{eq:dual_rec_budget}\\
        &\rmb{\sum_{i\in\mathcal O}g_{hpi}\nu^{(1)}_{ijp}\le d_{hp}\omega^{(1)}_{jp}} & \rmb{h\in H_p.} \label{eq:dual_rec_partition}
    \end{align}
\end{subequations}
If $\omega^{(1)}_{jp}>0$, then by substituting $\xi^{(1)}_{ijp}=\nu^{(1)}_{ijp} / \omega^{(1)}_{jp}$, we can reconstruct the constraints of the uncertainty set partition $p \in P_r$. Consequently, the critical scenario $\xi^{(1)}_{jp}  = \{\xi^{(1)}_{ijp}\}_{i \in \mathcal{O}}$ for constraint~\eqref{eq:unmet_model_UB} can be directly obtained. Similarly, let $\omega^{(2)}_{ap}, \nu^{(2)}_{iap}$, and $\eta^{(2)}_{iap}$ be the dual variables associated with constraints~\eqref{eq:RC_evac_time_1}--\eqref{eq:RC_evac_time_3}. By substituting \rmb{$\xi^{(2)}_{iap} = \nu^{(2)}_{iap} / \omega^{(2)}_{ap}$}, whenever $\omega^{(2)}_{ap}>0$, the critical scenario \(\xi^{(2)}_{ap}=\{\xi^{(2)}_{iap}\}_{i\in\mathcal O}\) for constraint~\eqref{eq:arc_capa_model1_pwl_UB} can be obtained in an identical manner. 



\section{Additional Computational Results}\label{app:detailed_computational_results}
This appendix provides additional computational results for Section~\ref{sec:operational_effectiveness}. The experiments are conducted under varying numbers of shelters $N \in \{2, 4, 6\}$, trailers $S \in \{2, 5, 10\}$, and maximum demand coverage percentages $\coverage \in \{95\%, 99\%\}$, setting the weight on evacuation time in the objective function to $\lambda = 0.1$. Table~\ref{tab:val_of_assign_adaptive_allocation} presents the percentage improvements in the worst-case objective upper bound (UB), unmet demand, and evacuation time due to centralized route assignment
(\CSOFix\ vs \PrefFix) and adaptive redistribution of relief supplies
(\Pref\ vs \PrefFix).
The results in Table~\ref{tab:detailed_results} show the worst-case unmet demand and evacuation time, upper and lower bounds, optimality gaps, and solution times for the four different models (\CSO, \CSOFix, \Pref, and \PrefFix). These results are used to analyze the effectiveness of centralized evacuation route planning and adaptive allocation of relief supplies.

\begin{table}[h!]
\centering
\caption{Percent improvement due to centralized route assignment
(\CSOFix\ vs \PrefFix) and adaptive redistribution of relief supplies
(\Pref\ vs \PrefFix).}
\label{tab:val_of_assign_adaptive_allocation}
\renewcommand{\arraystretch}{0.78}
\setlength{\tabcolsep}{9pt}
\resizebox{0.85\textwidth}{!}{
\begin{tabular}{%
  c|
  c|
  c
  | c c
  | c c
  | c c
}
\specialrule{1.5pt}{0pt}{0pt}
 \multicolumn{3}{c}{}& \multicolumn{2}{c}{${N=2}$} & \multicolumn{2}{c}{${N=4}$} & \multicolumn{2}{c}{${N=6}$} \\
\cmidrule(lr){4-5}\cmidrule(lr){6-7}\cmidrule(lr){8-9}
\multicolumn{1}{c}{$S$}  & \multicolumn{1}{c}{$\coverage$}  &\multicolumn{1}{c}{ }  
& \CSOFix\ vs \PrefFix
& \Pref\ vs \PrefFix
& \CSOFix\ vs \PrefFix
& \Pref\ vs \PrefFix
& \CSOFix\ vs \PrefFix
& \Pref\ vs \PrefFix
\\
\specialrule{1pt}{0pt}{0pt}

\multirow{6}{*}{$2$}

& \multirow{3}{*}{$95\%$}
& UB
& 38.7 & 6.6
& 55.1 & 32.0
& 67.0 & 41.2 \\

&
& Unmet dem.
& -8.9 & -1.8
& 61.2 & 58.8
& 79.9 & 53.7 \\

&
& Evac. time
& 50.1 & 7.6
& 44.9 & -0.5
& 30.8 & 9.4 \\

\cmidrule(lr){2-9}

& \multirow{3}{*}{$99\%$}
& UB
& 47.3 & 14.0
& 61.4 & 37.4
& 71.2 & 41.2 \\

&
& Unmet dem.
& 32.6 & 45.6
& 88.3 & 91.1
& 93.7 & 53.7 \\

&
& Evac. time
& 47.8 & 10.6
& 25.8 & -27.8
& 10.4 & 9.4 \\

\specialrule{1pt}{0pt}{0pt}

\multirow{6}{*}{$5$}

& \multirow{3}{*}{$95\%$}
& UB
& 50.1 & 18.3
& 58.4 & 22.6
& 68.1 & 8.1 \\

&
& Unmet dem.
& 4.1 & 6.5
& 45.6 & 38.9
& 68.3 & 21.3 \\

&
& Evac. time
& 61.7 & 22.0
& 71.0 & 5.5
& 66.3 & -28.4 \\

\cmidrule(lr){2-9}

& \multirow{3}{*}{$99\%$}
& UB
& 61.9 & 29.7
& 76.5 & 16.3
& 82.9 & 6.1 \\

&
& Unmet dem.
& 70.1 & 72.1
& 87.6 & 41.8
& 91.0 & 22.2 \\

&
& Evac. time
& 60.7 & 22.3
& 62.4 & -16.9
& 62.8 & -32.5 \\

\specialrule{1pt}{0pt}{0pt}

\multirow{6}{*}{$10$}

& \multirow{3}{*}{$95\%$}
& UB
& 39.2 & 0.4
& 42.8 & 0.1
& 53.0 & 10.1 \\

&
& Unmet dem.
& -1.4 & 0.0
& 18.9 & 0.2
& 37.5 & 18.6 \\

&
& Evac. time
& 51.2 & 0.0
& 63.3 & 0.3
& 71.9 & -1.7 \\

\cmidrule(lr){2-9}

& \multirow{3}{*}{$99\%$}
& UB
& 45.3 & -0.1
& 67.0 & 7.5
& 73.2 & 8.2 \\

&
& Unmet dem.
& -18.2 & -0.3
& 74.4 & 16.9
& 75.9 & 57.4 \\

&
& Evac. time
& 49.5 & 0.0
& 60.8 & -1.0
& 69.7 & -36.4 \\

\specialrule{1.5pt}{0pt}{0pt}

\end{tabular}
}
\end{table}

\begin{table}[h!]
\centering
\caption{Detailed computational results of the \CSO, \CSOFix, \Pref \ and \PrefFix \ models under different resource settings with $\lambda = 0.1$.}
\label{tab:detailed_results}
\renewcommand{\arraystretch}{0.95}
\resizebox{1.0\textwidth}{!}{
\begin{tabular}{%
  c
  c
  | c c c c
  | c c c c
  | c c c c
}
\specialrule{1.5pt}{0pt}{0pt}
 & & \multicolumn{4}{c|}{${N=2}$} & \multicolumn{4}{c|}{${N=4}$} & \multicolumn{4}{c}{${N=6}$} \\
\cmidrule(lr){3-6}\cmidrule(lr){7-10}\cmidrule(lr){11-14}
\multicolumn{2}{c|}{ }  
  & \CSO & \CSOFix & \Pref & \PrefFix
  & \CSO & \CSOFix & \Pref & \PrefFix 
  & \CSO & \CSOFix & \Pref & \PrefFix \\
\specialrule{1pt}{0pt}{0pt}


\multirow[c]{6}{*}{\begin{tabular}{@{}c@{}}$S=2$ \\ $\coverage=95\%$\end{tabular}}
& Unmet demand
  & 7,358 & 7,901 & 7,392 & 7,258
  & 7,924 & 8,250 & 8,778 & 21,285
  & 7,923 & 8,294 & 19,077 & 41,168 \\
& Evac. time
  & 118,147 & 131,521 & 243,364 & 263,462
  & 77,212 & 99,204 & 180,873 & 179,992
  & 77,923 & 113,771 & 149,063 & 164,469 \\
& UB
  & 18,892 & 20,363 & 31,013 & 33,195
  & 15,223 & 17,491 & 26,495 & 38,988
  & 15,352 & 18,930 & 33,715 & 57,368 \\
& LB
  & 17,697 & 18,790 & 30,334 & 32,578
  & 14,022 & 16,082 & 25,991 & 38,158
  & 14,026 & 16,951 & 32,319 & 56,422  \\
& Gap (\%)
  & 6.3 & 7.7 & 2.2 & 1.9  
  & 7.9 & 8.1 & 1.9 & 2.1
  & 8.6 & 10.5 & 4.1 & 1.6  \\
& Sol. time (sec)
  & 12,213 & 11,185 & 11,137 & 10,837 
  & 11,459 & 11,979 & 11,218 & 11,373
  & 11,987 & 11,997 & 11,147 & 11,526  \\
\cmidrule(lr){1-14}

\multirow[c]{6}{*}{\begin{tabular}{@{}c@{}}$S=2$ \\ $\coverage=99\%$\end{tabular}}
& Unmet demand
  & 1,702 & 1,905 & 1,537 & 2,825
  & 1,867 & 2,489 & 1,887 & 21,285
  & 1,796 & 2,597 & 19,077 & 41,168 \\
& Evac. time
  & 118,097 & 141,383 & 242,046 & 270,724
  & 96,960 & 133,487 & 229,989 & 179,899
  & 95,798 & 147,304 & 149,063 & 164,469 \\
& UB
  & 13,207 & 15,477 & 25,271 & 29,377
  & 11,317 & 15,053 & 24,386 & 38,973
  & 11,203 & 16,511 & 33,715 & 57,368 \\
& LB
  & 11,938 & 14,119 & 24,574 & 27,372
  & 9,569 & 13,720 & 23,501 & 38,158
  & 9,639 & 14,936 & 32,316 & 56,422  \\
& Gap (\%)
  & 9.6 & 8.8 & 2.8 & 6.8
  & 15.4 & 8.9 & 3.6 & 2.1
  & 14.0 & 9.5 & 4.1 & 1.6  \\
& Sol. time (sec)
  & 12,052 & 11,217 & 11,086 & 11,277 
  & 11,462 & 11,507 & 11,033 & 12,040
  & 12,286 & 10,806 & 11,950 & 11,471 \\
\specialrule{1pt}{0pt}{0pt}


\multirow[c]{6}{*}{\begin{tabular}{@{}c@{}}$S=5$ \\ $\coverage=95\%$\end{tabular}}
& Unmet demand
  & 7,385 & 7,491 & 7,301 & 7,809
  & 7,453 & 7,325 & 8,226 & 13,453
  & 7,479 & 7,628 & 18,949 & 24,085 \\
& Evac. time
  & 120,727 & 119,665 & 243,364 & 312,072
  & 36,792 & 38,690 & 126,126 & 133,441
  & 28,751 & 29,199 & 111,417 & 86,754 \\
& UB
  & 19,102 & 19,007 & 31,104 & 38,081
  & 11,046 & 11,100 & 20,651 & 26,675
  & 10,323 & 10,428 & 30,067 & 32,714 \\
& LB
  & 17,697 & 17,569 & 30,334 & 37,051 
  & 10,259 & 10,189 & 19,808 & 26,408
  & 9,500 & 8,843 & 28,469 & 31,127  \\
& Gap (\%)
  & 7.4 & 7.6 & 2.5 & 2.7
  & 7.1 & 8.2 & 4.1 & 1.0  
  & 8.0 & 15.2 & 5.3 & 4.8  \\
& Sol. time (sec)
  & 11,258 & 12,497 & 11,893 & 10,955
  & 12,134 & 10,809 & 12,013 & 3,951
  & 10,996 & 12,009 & 12,746 & 11,749\\
\cmidrule(lr){1-14}

\multirow[c]{6}{*}{\begin{tabular}{@{}c@{}}$S=5$ \\ $\coverage=99\%$\end{tabular}}
& Unmet demand
  & 1,709 & 1,648 & 1,537 & 5,505
  & 1,648 & 1,670 & 7,827 & 13,441
  & 1,642 & 1,858 & 16,050 & 20,629 \\
& Evac. time
  & 119,191 & 122,615 & 242,535 & 312,158
  & 37,617 & 40,225 & 124,945 & 106,901
  & 31,513 & 32,283 & 114,966 & 86,755 \\
& UB
  & 13,300 & 13,656 & 25,211 & 35,840
  & 5,288 & 5,595 & 19,951 & 23,832
  & 4,759 & 4,994 & 27,474 & 29,258 \\
& LB
  & 11,938 & 12,113 & 24,574 & 34,747
  & 4,493 & 4,903 & 19,494 & 23,443
  & 3,541 & 3,705 & 26,762 & 27,671 \\
& Gap (\%)
  & 10.2 & 11.3 & 2.5 & 3.1
  & 15.0 & 12.4 & 2.3 & 1.6
  & 25.6 & 25.8 & 2.6 & 5.4  \\
& Sol. time (sec)
  & 11,906 & 11,935 & 11,848 & 11,260
  & 11,601 & 11,435 & 11,563 & 11,259  
  & 11,569 & 12,189 & 12,311 & 12,069  \\
\specialrule{1pt}{0pt}{0pt}


\multirow[c]{6}{*}{\begin{tabular}{@{}c@{}}$S=10$ \\ $\coverage=95\%$\end{tabular}}
& Unmet demand
  & 7,349 & 7,401 & 7,297 & 7,297
  & 7,496 & 7,415 & 9,129 & 9,143
  & 7,781 & 7,695 & 10,014 & 12,303\\
& Evac. time
  & 117,252 & 118,471 & 242,535 & 242,535
  & 37,830 & 38,679 & 104,937 & 105,277
  & 27,520 & 28,132 & 101,840 & 100,158 \\
& UB
  & 18,811 & 18,928 & 31,031 & 31,146
  & 11,099 & 11,204 & 19,586 & 19,604
  & 10,501 & 10,446 & 19,978 & 22,216\\
& LB
  & 17,526 & 17,732 & 30,334 & 30,332
  & 10,345 & 10,398 & 18,907 & 18,905
  & 9,264 & 9,333 & 19,634 & 21,938\\
& Gap (\%)
  & 6.8 & 6.3 & 2.2 & 2.6  
  & 6.8 & 7.2 & 3.5 & 3.6
  & 11.8 & 10.7 & 1.7 & 1.3\\
& Sol. time (sec)
  & 11,494 & 12,458 & 11,907 & 11,367  
  & 11,919 & 12,001 & 11,373 & 11,392
  & 10,908 & 11,444 & 11,890 & 11,516\\
\cmidrule(lr){1-14}

\multirow[c]{6}{*}{\begin{tabular}{@{}c@{}}$S=10$ \\ $\coverage=99\%$\end{tabular}}
& Unmet demand
  & 1,576 & 1,816 & 1,541 & 1,537
  & 1,652 & 1,742 & 5,666 & 6,816
  & 2,020 & 2,032 & 3,597 & 8,449\\
& Evac. time
  & 118,835 & 122,463 & 242,535 & 242,535
  & 37,542 & 41,269 & 106,464 & 105,404
  & 28,238 & 30,299 & 136,567 & 100,158\\
& UB
  & 13,207 & 13,877 & 25,403 & 25,386
  & 5,322 & 5,671 & 15,879 & 17,160
  & 4,765 & 4,931 & 16,871 & 18,369 \\
& LB
  & 11,856 & 12,213 & 24,574 & 24,572
  & 4,580 & 4,768 & 15,735 & 16,779
  & 3,571 & 3,696 & 15,618 & 18,085 \\
& Gap (\%)
  & 10.2 & 12.0 & 3.3 & 3.2
  & 13.9 & 15.9 & 0.9 & 2.2
  & 25.1 & 25.0 & 7.4 & 1.5\\
& Sol. time (sec)
  & 12,073 & 11,786 & 11,869 & 10,899
  & 11,423 & 11,926 & 9,349 & 11,971
  & 12,066 & 11,983 & 10,891 & 11,481\\
\specialrule{1.5pt}{0pt}{0pt}


\end{tabular}
}
\end{table}

\end{APPENDICES}

\end{document}